\numberwithin{equation}{section}
\numberwithin{figure}{section}
\theoremstyle{plain}
\newtheorem{thm}{\protect\theoremname}[section]
  \theoremstyle{definition}
  \newtheorem{defn}[thm]{\protect\definitionname}
  \theoremstyle{remark}
  \newtheorem{rem}[thm]{\protect\remarkname}
  \theoremstyle{plain}
  \newtheorem{cor}[thm]{\protect\corollaryname}
  \theoremstyle{plain}
  \newtheorem{prop}[thm]{\protect\propositionname}
  \theoremstyle{plain}
  \newtheorem{lem}[thm]{\protect\lemmaname}
  \theoremstyle{definition}
  \newtheorem{example}[thm]{\protect\examplename}
\newenvironment{lyxlist}[1]
{\begin{list}{}
{\settowidth{\labelwidth}{#1}
 \setlength{\leftmargin}{\labelwidth}
 \addtolength{\leftmargin}{\labelsep}
 }}
{\end{list}}
\theoremstyle{plain}
\newcommand{\N}{\mathbb{N}}
\newcommand{\R}{\mathbb{R}}
\newcommand{\Z}{\mathbb{Z}}
\newcommand{\rad}[1]{\mathrm{rad}_{#1\to\infty}}
  \providecommand{\corollaryname}{Corollary}
  \providecommand{\definitionname}{Definition}
  \providecommand{\examplename}{Example}
  \providecommand{\lemmaname}{Lemma}
  \providecommand{\propositionname}{Proposition}
  \providecommand{\remarkname}{Remark}
\providecommand{\theoremname}{Theorem}
\begin{document}

\title[Concentration analysis]{Concentration analysis in Banach spaces}

\author{Sergio Solimini}

\address{Politecnico di Bari, via Amendola, 126/B, 70126 Bari, Italy}

\email{sergio.solimini@poliba.it}

\author{Cyril Tintarev}

\address{Uppsala University, P.O.Box 480, 751 06 Uppsala, Sweden}

\email{tintarev@math.uu.se}

\thanks{This author thanks the mathematics department of Politecnico di Bari,
as well as of Bari University, for their warm hospitality. }

\subjclass[2000]{Primary 46B20, 46B10, 46B50, 46B99. Secondary 46E15, 46E35, 47H10,
47N20, 49J99}

\keywords{weak topology, $\Delta$-convergence, Banach spaces, concentration
compactness, cocompact imbeddings, profile decompositions, Brezis-Lieb
lemma}
\begin{abstract}
The concept of a profile decomposition formalizes concentration compactness
arguments on the functional-analytic level, providing a powerful refinement
of the Banach-Alaoglu weak-star compactness theorem. We prove existence
of profile decompositions for general bounded sequences in uniformly
convex Banach spaces equipped with a group of bijective isometries,
thus generalizing analogous results previously obtained for Sobolev
spaces and for Hilbert spaces. Profile decompositions in uniformly
convex Banach spaces are based on the notion of $\Delta$-convergence
by T. C. Lim \cite{Lim} instead of weak convergence, and the two
modes coincide if and only if the norm satisfies the well-known Opial
condition, in particular, in Hilbert spaces and $\ell^{p}$-spaces,
but not in $L^{p}(\R^{N})$, $p\neq2$. $\Delta$-convergence appears
naturally in the context of fixed point theory for non-expansive maps.
The paper also studies connection of $\Delta$-convergence with Brezis-Lieb
Lemma and gives a version of the latter without an assumption of convergence
a.e.
\end{abstract}
\maketitle

\section{\label{intro}Introduction.}

Finding solutions of equations in functional spaces, in particular
of differential equations, typically involves the question of convergence
of functional sequences, which in turn often relies on compactness
properties of the problem. At the same time, infinite-dimensional
Banach spaces have no local compactness. Lack of compactness in a
sequence can be qualified in a variety of ways. For example, one can
look for coarser topologies in which sequences of particular type,
bounded in norm, become relatively compact. Banach-Alaoglu theorem
assures that a closed ball in any Banach space is compact in the weak{*}
topology. Concentration compactness principle (put in the terms of
Willem and Chabrowski - see the presentation in \cite{Chab} ) addresses
the situation when the norm in a functional space is expressed by
means of integration of some measure-valued map, which we may call
a Lagrangean, and when the Lagrangean, evaluated on a given sequence,
has a weak measure limit, its singular support is called a concentration
set. For specific sequences it is then possible to show that the singular
part of the limit measure is zero, which typically yields convergence
of the sequence in norm. As an illustration we sketch an argument
for existence of minimizers (Maz'ya \cite{Mazya}, Talenti \cite{Talenti})
in the Sobolev inequality:
\[
0<S_{N,p}=\inf_{u\in\dot{W}^{1,p}(\R^{N}):\,\|u\|_{p^{*}}=1}\int|\nabla u|^{p}\mathrm{d}x,\quad N>p\ge1,\quad p^{*}=\frac{pN}{N-p}\;.
\]
The Sobolev imbedding $\dot{W}^{1,p}(\R^{N})\hookrightarrow L^{p^{*}}(\R^{N})$
is not compact. It is invariant, however, with respect to transformations
$g[j,y](x)=2^{j\frac{N-p}{p}}u(2^{j}(x-y))$, $j\in\Z$, $y\in\R^{N}$,
and furthermore, if for any sequence $(j_{k},y_{k})\subset\Z\times\R^{N}$
one has $g[j_{k},y_{k}]u_{k}\rightharpoonup0$, then $u_{k}\to0$
in $L^{p^{*}}$ (Lions, \cite{Lions}). Therefore, if $\|u_{k}\|_{\dot{W}^{1,p}}^{p}\to S_{N,p}$
while $\|u_{k}\|_{p^{*}}=1$, then, necessarily, there exist $(j_{k},y_{k}){}_{k\in\N}\subset\Z\times\R^{N}$
such that a renamed subsequence of $g[j_{k},y_{k}]u_{k}$, which we
denote $v_{k}$ (and which, by invariance, is a minimizing sequence
as well) converges weakly to some $w\neq0$. A further reasoning that
involves convexity may be then employed to show that $\limsup\|u_{k}\|_{\dot{W}^{1,p}}^{p}<S_{N,p}$
unless $\|w\|_{p^{*}}=1$, and thus $w$ is a minimizer.

Concentration arguments in application to variational analysis of
PDE were developed and applied in the 1980's in works of Uhlenbeck,
Brezis, Coron, Nirenberg, T. Aubin, Lieb, Struwe, and P.-L. Lions,\textbf{
}with perhaps the most notable application being the Yamabe problem
of prescribed mean curvature \cite{Aubin,Trudinger,Schoen}. This
classical concentration compactness stimulated development of a more
detailed analysis of loss of compactness in terms of \textit{profile
decompositions}, starting with the notions of \emph{global compactness}
(for bounded domains and critical nonlinearities) of Struwe (\cite{Struwe})
and of translational compactness in $\mathbb{R}^{N}$ (for subcritical
nonlinearities) of Lions (1986). We do not aim here to provide a survey
of concentration compactness and its applications over the decades,
and refer the reader instead to the monographs of Chabrowski \cite{Chab}
and Tintarev \& Fieseler \cite{Fieseler-Tintarev}. 

A systematic \emph{concentration analysis} extends the concentration
compactness approach, from particular types of sequences in functional
spaces to \emph{general} sequences in functional spaces, and, further,
to general sequences in Banach spaces, studied in relation to general
concentration mechanisms modelled as actions of non-compact operator
groups. Concentration analysis can thus avail itself to the methods
of wavelet analysis: when the group, responsible for the concentration
mechanism, generates a wavelet basis, concentration may be described
in terms of sequence spaces of the wavelet coefficients. The counterpart
of Fourier expansion in the concentration analysis is \emph{profile
decomposition}. A profile decomposition represents a given bounded
sequence as a sum of its weak limit, decoupled elementary concentrations,
and a remainder convergent to zero in a way appropriate for some significant
application. An \emph{elementary concentration} is a sequence $g_{k}w\rightharpoonup0$,
where $g_{k}$ is a sequence of transformations (\emph{dislocations})
involved in the loss of compactness and the function $w$ is called
a \emph{concentration profile}. The 1995 paper of Solimini \cite{Solimini}
has shown that in case of Sobolev spaces any bounded sequence admits
a profile decomposition, with the only difference from the Palais-Smale
sequences for semilinear elliptic functionals (for which such profile
decompositions were previously known) being that it might contain
countably many, rather than finitely many, decoupled elementary concentrations.
The work of Solimini was independently reproduced in 1998-1999 by
Gérard \cite{Gerard} and Jaffard \cite{Jaffard}, who, on one hand,
provided profile decompositions for fractional Sobolev spaces as well,
but, on the other hand, gave a weaker form of remainder. It was subsequently
realized by Schindler and Tintarev \cite{SchindlerTintarev}, that
the notion of a profile decomposition can be given a functional-analytic
formulation, in the setting of a general Hilbert space and a general
group of isometries (see the alternative proof by Terence Tao via
non-standard analysis in \cite{TaoBlog2} p.168 ff.) This, in turn
stimulated the search for new concentration mechanisms, which included
inhomogeneous dilations $j^{-1/2}u(z^{j}$), $j\in\N$, with $z^{j}$
denoting an integer power of a complex number, for problems in the
Sobolev space $H_{0}^{1,2}(B)$ of the unit disk, related to the Trudinger-Moser
functionals \cite{AT}; and the action of the Galilean invariance,
together with shifts and rescalings, involved in the loss of compactness
in Strichartz imbeddings for the nonlinear Schrödinger equation (see
\cite{Tao,KiVi}). The existence of profile decompositions involving
the usual rescalings and shifts was established by Kyriasis \cite{Kyriasis}
Koch \cite{Koch}, Bahouri, Cohen and Koch \cite{BCK} for imbeddings
involving Besov, Triebel-Lizorkin and BMO spaces, although, like all
similar work based on the use of wavelet bases, it provided only a
weak form of remainder. Related results involving Morrey spaces were
obtained recently in \cite{PP}, using, like here, more classical
decompositions of spaces instead of wavelets. We refer the reader
for details to the recent survey of profile decompositions, \cite{survey}.

What is obviously missing in all the prior literature are results
about the existence of profile decompositions in the general setting
of abstract Banach spaces. This paper introduces a general theory
of concentration analysis in Banach space, as a sequel to an earlier
Hilbert space version \cite{SchindlerTintarev} and similar results
for Sobolev spaces \cite{Solimini,AT} as well as their wavelet-based
counterparts for Besov and Triebel-Lizorkin spaces \cite{BCK}. The
difference between the Hilbert space and the Banach space case is
essential and is rooted in the hitherto absence, in a general Banach
space, of a simple energy inequality that controls the total bulk
of profiles. Our approach to convergence in this paper is based in
finding clusters of concentations with prescribed energy bounds, as
there is no transparent relation between the total concentration energy
and energies of elementary concentrations. Energy estimates that we
obtain are not optimal and are based on the modulus of convexity. 

In order to obtain such inequality we have had to abandon weak convergence
in favor of\emph{ $\Delta$-convergence} introduced by T. C. Lim \cite{Lim},
see Definition {\large{}\ref{def: polar_limit}} below. The definition
applies to metric spaces as well, and are considered in this more
general setting in \cite{st3}. The notion of $\Delta$-limit is connected
to the notion of asympotic center of a sequence (\cite{Edelstein},
see Appendix B in this paper), namely, a sequence is $\Delta$-convergent
to $x$ if $x$ is an asymptotic center for each of its subsequences.
In Hilbert spaces, $\Delta$-convergence and weak convergence coincide
(this can be observed following the calculations in a related statement
of Opial, \cite[Lemma 1]{Opial}). More generally, the classical Opial's
condition (Condition 2 of \cite{Opial}, see Definition \ref{def:StrongOpial})
that has been in use for decades in the fixed point theory, is equivalent,
for uniformly convex and uniformly smooth spaces, to the condition
that $\Delta$-convergence and weak convergence coincide. Opial's
condition, however, does not hold in $L^{p}(\R^{N})$-spaces unless
$p\neq2$, as shown in \cite{Opial}. 

Similarly to the Banach-Alaoglu theorem, every bounded sequence in
the uniformly convex Banach space has a $\Delta$-convergent subsequence,
which follows from the $\Delta$-compactness theorem of Lim (\cite[Theorem 3]{Lim}).

\subsection*{Role of $\Delta$-convergence in profile decompositions}

It is shown in \cite{SchindlerTintarev} that any bounded sequence
in a Hilbert space $H$, equipped with an appropriate group $D$ of
isometries (called \emph{dislocations} or \emph{gauges}), has a subsequence
consisting of a sum of asymptotically orthogonal \textbf{\emph{elementary
concentrations}} and a remainder that \textbf{\emph{converges to zero}}\textbf{
}\textbf{\emph{$D$-weakly}}. These two terms mean the following.
\emph{An elementary concentration} (sometimes called a\emph{ bubble}
or a \emph{blow-up}) is an expression of the form $g_{k}w$, $k\in\N$,
where $w\in H$ (called \emph{concentration profile}) and $(g_{k})\subset D$
is a sequence weakly convergent to zero in the operator sense, such
that $g_{k}^{-1}u_{k}\rightharpoonup w$. A sequence $(u_{k})\subset H$
is \emph{convergent $D$-weakly to zero} if for any sequence $(g)\subset D$,
$g_{k}u_{k}$ converges to zero weakly. $D$-weak convergence is generally
stronger than weak convergence, and, in important applications, it
implies convergence in the norm of some space $X$ for which the imbedding
$H\hookrightarrow X$ is not compact. Profile decompositions with
a remainder vanishing despite the non-compactness of an imbedding
express \emph{defect of compactness} for a sequence, in form of a
rigidly structured sum of elementary blowups. Of course, vanishing
of the remainder in a useful norm depends on an appropriate choice
of group $D$. It is easy to check that if $D$ consists of all unitary
operators, $D$-weak convergence becomes norm convergence, and if
$D$ is compact, $D$-weak convergence coincides with weak convergence.
A useful group $D$ lies somewhere between these extremes. A continuous
imbedding $H\hookrightarrow X$ is called \emph{cocompact relative
to the group $D$ }if any $D$-weakly convergent sequence in $H$
is convergent in the norm of $X$. The notion of cocompactness extends
naturally to Banach spaces, but the proof for the profile decomposition
in Hilbert spaces cannot be generalized to the case of Banach spaces,
as the summary bulk of concentration profiles of a sequence $u_{k}$
is controlled by the inequality
\[
\sum_{n\in\N}\|w^{(n)}\|^{2}\le\liminf\|u_{k}\|^{2}.
\]
This inequality is, in turn, a consequence of the elementary relation
\begin{equation}
u_{k}\rightharpoonup u\,\Longrightarrow\|u_{k}\|^{2}=\|u_{k}-u\|^{2}+\|u\|^{2}+o(1).\label{eq:brl2}
\end{equation}
(For convenience of presentation, in equalities and inequalities between
terms of real-valued sequences, we use, as long as it does not cause
ambiguity, a Bachmann\textendash Landau notation o(1) to denote a
sequence of real numbers convergent to zero. In other words, $a_{k}=b_{k}+o(1)$
stays for $\lim_{k\to\infty}(a_{k}-b_{k})=0$, and $a_{k}\le b_{k}+o(1)$,
$a_{k},b_{k}\in\R$, $k\in\N$, stays for $\limsup_{k\to\infty}(a_{k}-b_{k})\le0$.)
A plausible conjecture for the general uniformly convex Banach space
(see Appendix A for definitions, in particular of the modulus of convexity,
denoted as $\delta$) would be, assuming for simplicity that $\liminf\|u_{k}\|\le1$,
that 
\begin{equation}
\sum_{n\in\N}\delta(\|w^{(n)}\|)\le\liminf\|u_{k}\|,\label{eq:nrg}
\end{equation}
where $\delta$ is the modulus of convexity for $X$. We have however,
as the closest Banach space version of (\ref{eq:brl2}), the inequality

\begin{equation}
u_{k}\rightharpoonup u\,\Longrightarrow\|u_{k}\|\ge\|u\|+\delta(\|u_{k}-u\|)+o(1),\label{eq:wl}
\end{equation}
proving which is an easy exercise using the definition of uniform
convexity and weak lower semicontinuity of the norm that we leave
to the reader. On the other hand, a desired inequality that leads
to (\ref{eq:nrg}) is rather 
\begin{equation}
\|u_{k}\|\ge\|u_{k}-u\|+\delta(\|u\|)+o(1),\label{eq:pl}
\end{equation}
and it is generally false when $u_{k}\rightharpoonup u$. It is true,
however, if $u$ is a $\Delta$-limit, rather than a weak limit of
$u_{k}$ (see Lemma \ref{lem:energy} below). In other words, concentration
profiles for sequences in Banach spaces emerge not as weak limits
of ``deflation'' sequences $g_{k}^{-1}u_{k}$, but as their $\Delta$-limits. 

\vspace{1cm}

We restrict consideration of Banach spaces to the class of uniformly
convex spaces, as the natural next step after having studied matters
of weak convergence and profile decomposition in Hilbert spaces. Uniformly
convex spaces have many common properties with Hilbert spaces, in
particular, reflexivity, Kadec property ($u_{k}\rightharpoonup u$,
$\|u_{k}\|\to\|u\|$ $\Longrightarrow$ $\|u_{k}-u\|\to0$), uniqueness
of $\Delta$-limits and sequential $\Delta$-compactness of balls,
that general Banach spaces do not necessarily possess.

It appears that sharper than (\ref{eq:wl}) or (\ref{eq:pl}) lower
bounds for the norms of sequences in Banach spaces require the use
of both the weak limit and the $\Delta$-limit. Among these cases
there is the important Brezis-Lieb Lemma (\cite{Brezis-Lieb}), which
states that if $(\Omega,\mu)$ is a general measure space and $u_{k}\rightharpoonup u$
in $L^{p}(\Omega,\mu)$, $1\le p<\infty$, and $u_{k}\to u$ $\mu$-a.e.
in $\Omega$, then 
\begin{equation}
\|u_{k}\|_{L^{p}}^{p}=\|u_{k}-u\|_{L^{p}}^{p}+\|u\|_{L^{p}}^{p}+o(1)\,.\label{eq:SuggestedNewEquation}
\end{equation}
Remarkably, no a.e. convergence is required for (\ref{eq:SuggestedNewEquation})
to hold when $\mu$ is a counting measure or when $p=2$ (when it
follows from (\ref{eq:brl2})). If, however, one does not assume convergence
a.e., one has the following analog of Brezis-Lieb lemma, proved in
Section 5 for $p\ge3$, namely an expression for a lower bound for
the norm of the sequence 
\begin{equation}
\|u_{k}\|_{L^{p}}^{p}\ge\|u_{k}-u\|_{L^{p}}^{p}+\|u\|_{L^{p}}^{p}+o(1)\label{eq:WBL}
\end{equation}
 where $u$ is assumed to be both the weak limit and the $\Delta$-limit
of the sequence (but no a.e. convergence is assumed). It is shown
in \cite{AditiBL} that condition $p\ge3$ is necessary, in particular,
when $(\Omega,\mu)$ is an interval with the Lebesgue measure.

The paper is organized as follows. In Section 2 we give the precise
definitions of the concepts arising in concentration analysis and
formulate our main results. Section 3 studies basic properties of
$\Delta$-convergence in uniformly convex Banach spaces. In Section
4 we prove the inequality (\ref{eq:WBL}). In Section 5 we prove the
existence of an abstract profile decomposition in terms of $\Delta$-convergence,
for every bounded sequence in a uniformly convex and uniformly smooth
Banach space, whenever the relevant collection of bijective isometries
on $X$ satisfies appropriate hypotheses. It is important to note
that the argument for existence of profile decomposition in Banach
spaces is different both from the Sobolev space case (\cite{Solimini}
), where the norms show a natural asymptotic decoupling behavior with
regard to distinct rescalings and from the general Hilbert spaces
case (\cite{SchindlerTintarev}), where decoupling of distinct concentrations
is expressed by their asymptotic orthogonality. In Section 6 we give
a general discussion of cocompactness and related properties. In Section
7 we prove Theorem \ref{thm:main-1}, discuss the remainder of the
profile decomposition in the context of cocompact imbeddings, and
give examples of the latter. In Appendix A we list definitions and
elementary properties of uniformly convex and uniformly smooth Banach
spaces, and in Appendix B we present the notion of asymptotic center
and its connection to $\Delta$-convergence. Appendix C discusses
an equivalent form of the main condition for the groups involved in
profile decompositions. 

The main results of the paper are:
\begin{itemize}
\item Profile decompoisitions: Theorem \ref{thm:main}, its simplified version
Theorem \ref{thm:main-1},  and profile decomposition in the dual
space: Proposition \ref{prop:dual coco} and Theorem\ref{thm:dual coco};
\item Equivalence of the classical Opial's condition in uniformly convex
and uniformly smooth spaces to the property that weak convergence
and $\Delta$-convergence coincide, Theorem \ref{thm:UCOpial};
\item An analog of the Brezis-Lieb lemma, where the assumption of pointwise
convergence replaced by the assumption of equal weak and $\Delta$-limits,
Theorem \ref{thm:newbl}.
\end{itemize}

\section{Basic notions of concentration analysis and statement of results}

The key element required for obtaining a cocompact imbedding of a
Banach space $X$ into a Banach space $Y$ is a collection $D$ of
operators which act isometrically and surjectively (and thus bijectively)
on $X$ and which are chosen in such a way that any bounded sequence
of elements in $X$ which convergence weakly to zero under action
of any sequence from $D$ (see Definition \ref{(Gauged-weak-convergence)}
below) must converge to zero in the norm of $Y$. The operators of
$D$ are often referred to as ``blow-up'' or ``rescaling'' isometries
since a frequently occurring example of $D$ is the set of typical
concentration actions $u\mapsto t^{r}u(t\cdot)$, $t>0$. It seems
better, however, to use some more general terminology, such as \emph{gauges,}
or\emph{ dislocations,} to refer to these operators, since $D$ can
be quite different in other important cases. For example, it may consist
of actions of anisotropic or inhomogeneous dilations, of isometries
on Riemannian manifolds, or of shifts in the Fourier variable. An
elementary example is provided by a set of index shifts $u\mapsto u_{\cdot+j}$
on a sequence space.%

Let $D$ be a set of bijective isometries on a Banach space $Z$.
We will use the following notation: $D^{-1}=\{h^{-1}\}_{h\in D}$. 
\begin{defn}
\label{(Gauged-weak-convergence)}(Gauged weak convergence) Let $Z$
be a Banach space, and let $D\ni I$ be a bounded set of bijective
isometries on $Z$ such that $D^{-1}$ is also a bounded set. One
says that a sequence $(u_{k})_{k\in\mathbb{N}}$ of elements in $Z$
converges to zero $D$-weakly if $g_{k}^{-1}u_{k}\rightharpoonup0$
for every choice of the sequence $(g_{k}){}_{k\in\mathbb{N}}\subset D$.
We use the notation $u_{k}\stackrel{D}{\rightharpoonup}0$ to denote
\emph{$D$-}weak convergence and the notation $u_{k}\stackrel{D}{\rightharpoonup}u$
to mean that $u_{k}-u\stackrel{D}{\rightharpoonup}0$. 
\end{defn}
We remark that in analogues of this definition appearing in earlier
papers on this subject, the roles of $D$ and $D^{-1}$ are interchanged.
This makes no difference when $D$ is a group. 

{} The definition below will be adapted to the different mode of convergence
introduced in the course of argument, but remains relevant for the
class of norms satisfying the Opial's condition which arise in most
known applications. 
\begin{defn}
\label{(Cocompact set)}(Cocompact subsets) Let $Z$ be a Banach space,
and let $D\ni I$ be a set of bijective isometries on $Z$. A set
$B\subset Z$ is called $D$-cocompact if every $D$-weakly convergent
sequence in $B$ converges in norm in $Z$. 
\end{defn}
Clearly the limit in norm of such a sequence must be the same element
as its $D$-weak limit. It is also clear that every precompact subset
of $X$ is also $D$-cocompact. 
\begin{defn}
\label{(Cocompactness)}(Cocompact imbeddings) Let $X$ be a Banach
space continuously embedded into a Banach space $Y$. Let $D\ni I$
be a set of bijective isometries on $X$. Suppose that every sequence
$(u_{k})_{k\in\mathbb{N}}$ satisfying $u_{k}\stackrel{D}{\rightharpoonup}0$
in $X$ also satisfies $\|u_{k}\|_{Y}\to0$. Then we say that the
imbedding $X\hookrightarrow Y$ is \emph{cocompact} relative to the
set $D$, and we denote this by writing $X\stackrel{D}{\hookrightarrow}Y$.
\end{defn}
It is easy to see that the following definition, under the additional
assumptions it makes, is equivalent to Definition \ref{(Cocompactness)}. 
\begin{defn}
\label{(Cocompactness)-1} Let $X$ be a Banach space continuously
and embedded into a Banach space $Y$ and assume that $X$ is dense
in $Y$ and $Y^{*}$ is dense in $X^{*}$. Let $D\ni I$ be a set
of bijective isometries on $Y$, and assume that the set $D_{X}$
of restrictions of operators in $D$ to $X$ defines a set of bijective
isometries on $X$. One says that the imbedding $X\hookrightarrow Y$
is cocompact\emph{ }relative to the set $D$, if all bounded subsets
of $X$ are $D$-cocompact in $Y$. 
\end{defn}
In what follows, weak convergence of a sequence of operators $(A_{k}){}_{k\in\mathbb{N}}$
on a Banach space $X$ to an operator $A$, i.e. $A_{k}x\rightharpoonup Ax$
for each $x\in X$, will be denoted by $A_{k}\rightharpoonup A$.
The following question arises immediately when one knows which set
of bijective isometries $D$ is responsible for concentration, or,
in other words, when an imbedding $X\hookrightarrow Y$ is cocompact
relative to a given set $D$: Is it possible, for any bounded sequence
in $X$, to produce a subsequence which is norm convergent in $Y$
by subtraction of \emph{elementary concentrations}? We recall that
by an elementary concentration for a sequence $(u_{k})_{k\in\mathbb{N}}\subset X$
we mean a sequence of the special form $(g_{k}w)_{k\in\mathbb{N}}$,
where $(g_{k})_{k\in\mathbb{N}}\subset D$, $g_{k}\rightharpoonup0$,
and $g_{k}^{-1}u_{k}\rightharpoonup w\neq0$ in $X$ on some renamed
subsequence. The use of word \emph{concentration} originates in the
case when the set $D$ consists of dilation operators on a functional
space, so that, as $k$ tends to $\infty$, the graphs of the functions
$g_{k}w$ become taller and narrower peaks clustering around some
point of the underlying set. Such concentrations occur in scale-invariant
PDE, such as semilinear elliptic equations with critical nonlinearities.
\begin{defn}
\label{def:pd}One says that a bounded sequence $(u_{k})_{k\in\mathbb{N}}$
in a Banach space $X$ admits a \emph{profile decomposition} with
respect to the set of bijective linear isometries $D\ni I$, if there
exists a sequence $r_{k}\stackrel{D}{\rightharpoonup}0$ and, for
each $n\in\mathbb{N}$, there exists an element $w^{(n)}\in X$ and
a sequence $(g_{k}^{(n)})_{k\in\mathbb{N}}\subset D$ such that $g_{k}^{(1)}=I$
and

\begin{equation}
(g_{k}^{(n)})^{-1}g_{k}^{(m)}\rightharpoonup0\text{ whenever }m\neq n\text{ (asymptotic decoupling of gauges)},\label{eq:decouple-1-1}
\end{equation}
and such that a renamed subsequence of $(u_{k})_{k\in\mathbb{N}}$
can be represented in the form 
\begin{equation}
u_{k}=\sum_{n=1}^{\infty}g_{k}^{(n)}w^{(n)}+r_{k}\text{ for each }k,\label{eq:ProDec-1-1}
\end{equation}
where the series $\sum_{n=1}^{\infty}g_{k}^{(n)}w^{(n)}$ is convergent
in $X$ unconditionally and uniformly in $k$. \textit{\emph{(It follows
immediately then that $(g_{k}^{(n)})^{-1}u_{k}\rightharpoonup w^{(n)},\; n\in\mathbb{N}$.)}}
\end{defn}
Note that in general, any subset of \emph{profiles} $w^{(n)}$ may
consist of zero elements. In particular, the sum in (\ref{eq:ProDec-1-1})
may be finite. 

In the Banach space setting (restricted in the present study to uniformly
convex spaces) we will establish the existence of a variant of this
profile decomposition, based on $\Delta$-convergence, studied in
the next section. $\Delta$-convergence, as we show in Theorem \ref{thm:UCOpial}
below, coincides with weak convergence if and only if the Opial's
condition (see e.g. Definition \ref{def:StrongOpial}) holds.

Our main result follows below. It uses a technical condition (\ref{eq:*-1})
that extends to the Banach space case the condition of dislocation
group used in \cite{SchindlerTintarev} for Hilbert space case, and
it is verified in a great number of applications. We refer the reader
for details to the book \cite{Fieseler-Tintarev} and to the recent
survey \cite{survey}. Our principle example of the class of spaces
that satisfy conditions of two theorems below is Besov spaces $\dot{B}{}^{s,p,q}\left(\mathbb{R}^{N}\right)$
and Triebel-Lizorkin spaces $\dot{F}{}^{s,p,q}\left(\mathbb{R}^{N}\right)$
with $s\in\R$ and $p,q\in(1,\infty)$ when supplied with equivalent
norms, based on Littlewood-Paley decomposition (see e.g. books of
Triebel \cite{Triebel} or Adams \& Fournier \cite{Adams}).
\begin{thm}
\label{thm:main-1}Let $X$ be a uniformly convex and uniformly smooth
Banach space that satisfies the Opial's condition. Let $D_{0}$ be
a group of linear isometries satisfying the property
\begin{equation}
(g_{k})\subset D_{0},\; g_{k}\not\rightharpoonup0\Longrightarrow\exists(k_{j})\subset\mathbb{N}:\;(g_{k_{j}}^{-1}),(g_{k_{j}})\text{ converge strongly (i.e. pointwise)}\label{eq:*-1}
\end{equation}
and let $D\ni I$ be a subset of $D_{0}$. Then every bounded sequence
$(u_{k})\subset X$ admits a profile decomposition with respect to
$D$. Moreover, if $\|u_{k}\|\le1$ for all $k$, and $\delta$ is
the modulus of convexity of $X$, then 
\begin{equation}
\limsup\|r_{k}\|+\sum_{n}\delta(\|w^{(n)}\|)\le1.\label{eq:Energy-1}
\end{equation}
where $r_{k}$ and $w^{(n)}$ are the elements arising in the profile
decomposition as defined in (\ref{eq:ProDec-1-1}).\end{thm}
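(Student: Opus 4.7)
The plan is to derive Theorem \ref{thm:main-1} as a corollary of the more general $\Delta$-convergence based profile decomposition, Theorem \ref{thm:main}, by invoking Theorem \ref{thm:UCOpial} to translate the conclusion from $\Delta$-convergence into weak convergence. The Opial hypothesis is precisely what makes the two modes of convergence coincide on bounded sequences in uniformly convex, uniformly smooth spaces, so the ``simplification'' in Theorem \ref{thm:main-1} is essentially a dictionary translation.

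First I would apply Theorem \ref{thm:main} to $(u_k)$ together with $D\subset D_0$. This yields, on a renamed subsequence, profiles $w^{(n)}\in X$ and dislocations $(g_k^{(n)})\subset D$ with $g_k^{(1)}=I$, an asymptotic $\Delta$-decoupling of the form $(g_k^{(n)})^{-1}g_k^{(m)}\stackrel{\Delta}{\to}0$ for $m\neq n$, and a representation
\begin{equation*}
u_k=\sum_{n=1}^{\infty}g_k^{(n)}w^{(n)}+r_k
\end{equation*}
whose series is unconditionally norm convergent uniformly in $k$ and whose remainder $r_k$ is $D$-$\Delta$-null. Next, Theorem \ref{thm:UCOpial} identifies $\Delta$-convergence with weak convergence on bounded sequences under the present hypotheses. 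Applied to $(g_k^{(n)})^{-1}g_k^{(m)}x$ for each $x\in X$ and each $m\neq n$, it promotes the $\Delta$-decoupling to the weak operator decoupling (\ref{eq:decouple-1-1}); applied to $g_k^{-1}r_k$ for every $(g_k)\subset D$, it turns $D$-$\Delta$-nullity of the remainder into $D$-weak nullity in the sense of Definition \ref{(Gauged-weak-convergence)}. A profile decomposition in the sense of Definition \ref{def:pd} thereby emerges.

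For the energy estimate (\ref{eq:Energy-1}), under the normalization $\|u_k\|\le 1$, I would iterate the $\Delta$-convergence polar inequality recorded in Lemma \ref{lem:energy}: for a $\Delta$-convergent sequence $v_k\stackrel{\Delta}{\to}v$ one has $\|v_k\|\ge\|v_k-v\|+\delta(\|v\|)+o(1)$. At stage $n$, set $v_k^{(n)}=(g_k^{(n)})^{-1}\bigl(u_k-\sum_{j<n}g_k^{(j)}w^{(j)}\bigr)$; the asymptotic decoupling (\ref{eq:decouple-1-1}) forces $v_k^{(n)}\stackrel{\Delta}{\to}w^{(n)}$, while the isometry of $g_k^{(n)}$ preserves norms. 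Applying Lemma \ref{lem:energy} at each stage yields inductively
\begin{equation*}
\limsup\Bigl\|u_k-\sum_{j=1}^{n}g_k^{(j)}w^{(j)}\Bigr\|\le 1-\sum_{j=1}^{n}\delta(\|w^{(j)}\|).
\end{equation*}
Passing to $n\to\infty$ and using the uniform-in-$k$ convergence of the series to conclude that the left-hand side tends to $\limsup\|r_k\|$ gives (\ref{eq:Energy-1}).

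The hard part is not this reduction but the proof of Theorem \ref{thm:main} itself, where a countable family of decoupled concentrations has to be extracted with enough bulk control that the series in (\ref{eq:ProDec-1-1}) converges uniformly in $k$; condition (\ref{eq:*-1}) on $D_0$ is precisely what allows the passage to a subsequence of dislocations with pointwise limits at each inductive step. At the level of the present corollary, the only subtlety requiring attention is the verification that each deflated sequence $v_k^{(n)}$ is $\Delta$-convergent to the prescribed profile $w^{(n)}$, which follows from the isometry of $g_k^{(n)}$, the $\Delta$-decoupling of the gauges, and the Opial-driven identification of $\Delta$-convergence with weak convergence.
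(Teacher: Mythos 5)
Your proposal follows the same route as the paper's own proof: apply Theorem \ref{thm:main} and use the Opial condition (via Theorem \ref{thm:UCOpial}) to identify $\Delta$-convergence with weak convergence, so that the $\Delta$-profile decomposition becomes a profile decomposition in the sense of Definition \ref{def:pd}; note that the energy bound (\ref{eq:Energy-1}) is already asserted in Theorem \ref{thm:main}, so your separate iteration of Lemma \ref{lem:energy} is redundant. The one step worth making explicit is that hypothesis (\ref{eq:*-1}) by itself only gives condition (\ref{eq:*}), whereas Theorem \ref{thm:main} requires $D_{0}$ to be a dislocation group, i.e.\ also to satisfy (\ref{eq:**}); under the Opial condition this is immediate, since $\Delta$-convergence reduces to weak convergence, which is additive, and $g_{k}\rightharpoonup0$ gives $g_{k}w\rightharpoonup0$.
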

\begin{rem}
The restriction $\|u_{k}\|\le1$ is inessential. Unless $(x_{k})$
has a subsequence convergent to zero in $X$ (in which case Theorem
\ref{thm:main} holds with $w^{(n)}=0$ for all $n$), one can apply
Theorem \ref{thm:main} to a subsequence of $x_{k}/\|x_{k}\|$ with
$\|x_{k}\|\to\nu>0$. Then the assertion of Theorem \ref{thm:main-1}
(and analogous statements further in this paper) will hold with the
only modification being $\delta$ replaced by $\nu\delta(\frac{\cdot}{\nu})$. 
\end{rem}

\begin{rem}
The assumption of uniform convexity cannot be removed, as we can see
from the example of $X=L^{\infty}(\mathbb{R})$ with $D$ being a
group of integer shifts. Let $x_{k}$ be a characteristic function
of a disjoint union of all intervals of the length $j/2^{k}$, $j=1,\dots2^{k}$,
translated in such a manner that the distance between any two intervals
exceeds $k$. Then the distinct profiles of $x_{k}$ will be characteristic
functions of all intervals $(0,t)$ , $t\in(0,1]$, and thus form
an an uncountable set. \end{rem}
\begin{cor}
If, in addition to the assumptions of Theorem \ref{thm:main-1}, the
space $X$ is $D$-cocompactly imbedded into another Banach space
$Y$, then the remainder $r_{k}$ converges to zero in the norm of
$Y$. 
\end{cor}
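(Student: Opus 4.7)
The plan is to read the conclusion directly off the definitions once Theorem~\ref{thm:main-1} is available. By that theorem, a renamed subsequence of $(u_k)$ admits a profile decomposition of the form (\ref{eq:ProDec-1-1}), and the remainder $(r_k)$ is built into the decomposition as a sequence satisfying $r_k \stackrel{D}{\rightharpoonup} 0$ in $X$. This is precisely the hypothesis entering Definition~\ref{(Cocompactness)} of cocompact imbedding.

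So the first and only step is: invoke the cocompactness hypothesis $X \stackrel{D}{\hookrightarrow} Y$, which by Definition~\ref{(Cocompactness)} asserts that every $D$-weakly null sequence in $X$ converges to zero in the norm of $Y$. Applied to $(r_k)$, this yields $\|r_k\|_Y \to 0$.

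There is essentially no obstacle; the corollary is the direct translation of cocompactness into the language of profile decompositions. One might want to note, for bookkeeping, that $(r_k)$ is automatically bounded in $X$: the sequence $(u_k)$ is bounded by assumption, the series $\sum_n g_k^{(n)} w^{(n)}$ converges in $X$ uniformly in $k$ by the definition of profile decomposition (and indeed its total mass is controlled by the energy inequality (\ref{eq:Energy-1})), and therefore $r_k = u_k - \sum_n g_k^{(n)} w^{(n)}$ is bounded. This boundedness is, however, not strictly needed to apply Definition~\ref{(Cocompactness)}, which is formulated directly in terms of $D$-weak convergence without a separate boundedness requirement.
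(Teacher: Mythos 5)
Your proof is correct and matches the paper's (implicit) argument: the corollary is an immediate consequence of the fact that Theorem \ref{thm:main-1} delivers a remainder with $r_{k}\stackrel{D}{\rightharpoonup}0$ in $X$, which is exactly the hypothesis of Definition \ref{(Cocompactness)}. Your side remark that boundedness of $(r_{k})$ is available but not needed is also accurate.
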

In the main body of the paper we first prove a more general statement,
Theorem \ref{thm:main}, similar to Theorem \ref{thm:main-1}, that
does not assume the Opial's condition, and then derive Theorem \ref{thm:main-1}
from it as an elementary corollary. In absence of the Opial's condition,
the argument is based on $\Delta$- and $D$-$\Delta$-convergence
instead of, respectively, weak and $D$-weak convergence. 

We also prove a conjecture by Michael Cwikel (personal communication)
that when $X\stackrel{D}{\hookrightarrow}Y$, the existence of profile
decompositions in $X$ implies the existence of ``dual'' profile
decompositions in $Y^{*}\stackrel{D^{\#}}{\hookrightarrow}X^{*}$,
where $ $
\[
D^{\#}=\{g{}^{*-1},\; g\in D\}.
\]

\begin{thm}
\label{thm:dual coco}Let $Y$ be a uniformly convex and uniformly
smooth Banach space that satisfies the Opial's condition. Let $I\in D\subset D_{0}$
where $D_{0}$ is a group of linear isometries in $X$ and $Y$ satisfying
(\ref{eq:*-1}). If $X\stackrel{D}{\hookrightarrow}Y$ and $X$ is
dense in $Y$, then $Y^{*}\stackrel{D^{\#}}{\hookrightarrow}X^{*}$
and any bounded sequence in $Y^{*}$ has a profile decomposition relative
to $D^{\#}$ with the remainder sequence \textup{$(r_{k})_{k\in\mathbb{N}}$}
converging in norm to 0 in $X^{*}$.
\end{thm}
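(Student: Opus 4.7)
The plan is to split the proof into two parts: (i) establishing the dual cocompactness $Y^{*}\stackrel{D^{\#}}{\hookrightarrow}X^{*}$ (the content of Proposition \ref{prop:dual coco}), and (ii) applying the profile decomposition theorem to $(u_{k}^{*})\subset Y^{*}$. Granted both, the theorem follows at once: the profile decomposition of $u_{k}^{*}$ in $Y^{*}$ produces a remainder that is $D^{\#}$-weakly null in $Y^{*}$, which by (i) must then vanish in $X^{*}$-norm.

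For part (ii), the plan is to verify that the hypotheses of Theorem \ref{thm:main-1} transfer from $Y$ to $Y^{*}$ with the dual group $D^{\#}$. The space $Y^{*}$ is uniformly convex and uniformly smooth since $Y$ is; $D^{\#}\ni I$ is a group of bijective linear isometries on $Y^{*}$; and condition (\ref{eq:*-1}) must be verified for $D_{0}^{\#}$ on $Y^{*}$. For the latter, suppose $g_{k}^{*-1}\not\rightharpoonup 0$ as operators on $Y^{*}$. By reflexivity of $Y^{*}$ and duality, one finds that $g_{k}^{-1}\not\rightharpoonup 0$ as operators on $Y$, so (\ref{eq:*-1}) for $D_{0}$ on $Y$ yields a subsequence along which $g_{k}$ and $g_{k}^{-1}$ converge pointwise in norm on $Y$ to bijective isometries $g,g^{-1}$. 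Passing to adjoints gives pointwise weak convergence of $g_{k}^{*},g_{k}^{*-1}$ to $g^{*},g^{*-1}$ on $Y^{*}$, and since all of these are isometries on $Y^{*}$ the norms match in the limit, so the Kadec property of the uniformly convex space $Y^{*}$ upgrades weak to strong pointwise convergence. If Opial's condition cannot be transferred from $Y$ to $Y^{*}$, we appeal to Theorem \ref{thm:main} (the version not requiring Opial) in place of Theorem \ref{thm:main-1}, with $D^{\#}$-$\Delta$-nullity playing the role of $D^{\#}$-weak nullity.

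The main technical step is part (i), to be proved by contradiction. Assume $(s_{k})\subset B_{Y^{*}}$ satisfies $s_{k}\stackrel{D^{\#}}{\rightharpoonup}0$ but $\|s_{k}\|_{X^{*}}\ge\epsilon$ along a subsequence, and pick $x_{k}\in B_{X}$ with $\langle s_{k},x_{k}\rangle\ge\epsilon$. View $(x_{k})$ as a bounded sequence in $Y$ and apply Theorem \ref{thm:main-1} in $Y$ (whose hypotheses hold since $Y$ is uniformly convex, uniformly smooth, and satisfies Opial) to obtain
\[
x_{k}=\sum_{n}g_{k}^{(n)}w^{(n)}+r_{k},\qquad r_{k}\stackrel{D}{\rightharpoonup}0\text{ in }Y.
\]
Then $\langle s_{k},x_{k}\rangle=\sum_{n}\langle(g_{k}^{(n)})^{*}s_{k},w^{(n)}\rangle+\langle s_{k},r_{k}\rangle$. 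For each fixed $n$, the term $\langle(g_{k}^{(n)})^{*}s_{k},w^{(n)}\rangle\to 0$ because $(g_{k}^{(n)})^{*}s_{k}=h_{k}^{-1}s_{k}$ with $h_{k}=(g_{k}^{(n)})^{*-1}\in D^{\#}$ and $s_{k}\stackrel{D^{\#}}{\rightharpoonup}0$; the tail over $n>N$ is uniformly small in $k$ by unconditional $Y$-norm convergence of the decomposition. The main obstacle is the remainder $\langle s_{k},r_{k}\rangle$, because $r_{k}$ lies in $Y$ but not necessarily in $X$, so the primal cocompactness $X\stackrel{D}{\hookrightarrow}Y$ cannot be applied to $r_{k}$ directly. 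The intended treatment is to exploit density of $X$ in $Y$ together with the energy estimate (\ref{eq:Energy-1}) in order to approximate each profile $w^{(n)}$ by $\tilde{w}^{(n)}\in X$, with error summable in $n$, so that the modified remainder $\tilde{r}_{k}=x_{k}-\sum_{n}g_{k}^{(n)}\tilde{w}^{(n)}$ lies in $X$, is bounded there, is $Y$-close to $r_{k}$ uniformly in $k$, and is $D$-weakly null in $X$; cocompactness then forces $\|\tilde{r}_{k}\|_{Y}\to 0$ and hence $\|r_{k}\|_{Y}\to 0$, yielding the contradiction $\epsilon\le 0$.
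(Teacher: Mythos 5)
Your overall architecture matches the paper's: you verify condition (\ref{eq:*-1}) for the adjoint group on $Y^{*}$ (your Kadec-property argument is essentially the one in the paper), apply the profile decomposition theorem in the uniformly convex and uniformly smooth space $Y^{*}$, and reduce everything to the dual cocompactness statement, which is exactly Proposition \ref{prop:dual coco}. The gap is in your proof of that proposition. You take a norming sequence $x_{k}\in B_{X}$ for $s_{k}$ and then decompose $x_{k}$ \emph{in $Y$}. This places both the profiles and the remainder $r_{k}$ in $Y$, and, as you yourself observe, cocompactness $X\stackrel{D}{\hookrightarrow}Y$ is a statement about sequences \emph{in $X$} that are $D$-weakly null \emph{in $X$}; it says nothing about a sequence that merely lives in $Y$ and is $D$-weakly null there (an oscillating sequence in $Y=L^{p^{*}}$ is $D$-weakly null in $Y$ without converging in $Y$-norm). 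Your proposed repair does not close this gap: density of $X$ in $Y$ lets you choose $\tilde{w}^{(n)}\in X$ with $\|\tilde{w}^{(n)}-w^{(n)}\|_{Y}$ summably small, but it gives no control whatsoever on $\|\tilde{w}^{(n)}\|_{X}$, so the series $\sum_{n}g_{k}^{(n)}\tilde{w}^{(n)}$ need not converge in $X$ and $\tilde{r}_{k}$ need not be a well-defined bounded sequence in $X$. Even granting that, $D$-weak nullity of $\tilde{r}_{k}$ in $X$ requires testing against all of $X^{*}$, and $Y$-norm closeness to a sequence that is $D$-weakly null in $Y$ only controls pairings with $Y^{*}\subset X^{*}$.

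The paper's fix is simple and is the one step you are missing: perform the profile decomposition of the norming sequence $v_{k}^{*}$ (the conjugate in $X$ of $v_{k}\in Y^{*}\subset X^{*}$) \emph{in the space $X$}, not in $Y$. Then the profiles $w^{(n)}$ and the remainder $r_{k}$ automatically lie in $X$, the remainder is $D$-$\Delta$-null in $X$, and cocompactness applies directly to give $\|r_{k}\|_{Y}\to0$; the terms $\langle g_{k}^{(n)*}v_{k},w^{(n)}\rangle$ vanish exactly as in your computation since $w^{(n)}\in X\subset Y$ and $g_{k}^{(n)*}v_{k}\rightharpoonup0$ in $Y^{*}$, with the sum over $n$ uniformly convergent in $k$. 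With that substitution your argument becomes the paper's.
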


\section{\label{sec:BasicNotions} $\text{\ensuremath{\Delta}}$-convergence
in uniformly convex spaces}

\subsection{Definition and basic properties}
\begin{defn}
\label{def: polar_limit}Let $(x_{k})_{k\in\mathbb{N}}$ be a sequence
in a Banach space $X$. One says that $x$ is a\emph{ $\Delta$-limt}
of $(x_{k})$ if 
\begin{equation}
\forall y\text{\ensuremath{\in}X}\quad\|x_{k}-x\|\text{\ensuremath{\le}}\|x_{k}-y\|+o(1).\label{eq:oldmw-1-1}
\end{equation}
We will use the notation $x_{k}\rightharpoondown x$ as well as $x=\mathrm{\stackrel{\rightharpoondown}{\lim}}\, x_{k}$
to denote $\Delta$-convergence. Obviously if $x_{k}$ converges to
$x$ in norm, then $x$ is a unique\emph{ $\Delta$}-limit of $(x_{k})$. \end{defn}
\begin{prop}
\label{thm:3eqdef}Suppose that $(x_{k})_{k\in\mathbb{N}}$ is a bounded
sequence in a uniformly convex Banach space $X$ and let $x\in X$.
If $x_{k}\rightharpoondown x$, then for each element $z\in X$ with
$z\ne x$ there exist a positive constant $k_{0}$ and a positive
constant $c$ depending on $\|x-z\|$ and $\sup_{k\in\mathbb{N}}\left\Vert x_{k}\right\Vert $
continuously in $(0,\infty)\times[0,\infty]$, such that

\begin{equation}
\left\Vert x_{k}-x\right\Vert \le\left\Vert x_{k}-z\right\Vert -c\mbox{ for all \ensuremath{k\ge k_{0}}}.\label{eq:NewStrongDom}
\end{equation}
\end{prop}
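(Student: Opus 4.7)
The plan is to sharpen the defining inequality of $\Delta$-convergence into a strict-gap estimate by inserting the midpoint $y=\frac{1}{2}(x+z)$ between $x$ and $z$, and then exploiting uniform convexity to push $\|x_k-y\|$ strictly below the larger of $a_k:=\|x_k-x\|$ and $b_k:=\|x_k-z\|$. The key algebraic fact is that $x_k-y = \frac{1}{2}\bigl[(x_k-x)+(x_k-z)\bigr]$ is the mean of two vectors whose difference is the fixed nonzero vector $z-x$ of norm $d:=\|x-z\|>0$.

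First, I apply Definition \ref{def: polar_limit} with the test element $y$ to obtain $a_k \le \|x_k-y\| + o(1)$. Writing $M_k := \max(a_k, b_k)$, the scaled form of uniform convexity applied to the pair $(x_k-x,\,x_k-z)$ gives
\[
\|x_k - y\| \le M_k\bigl(1 - \delta(d/M_k)\bigr),
\]
while the triangle inequality $a_k+b_k\ge d$ delivers the crucial lower bound $M_k \ge d/2$. To convert $\delta(d/M_k)$ into a fixed positive quantity, I fix an upper bound $M$ for $M_k$ depending only on $d$ and $R:=\sup_k\|x_k\|$: applying the $\Delta$-convergence inequality at the test point $0$ gives $a_k \le R + o(1)$, whence $b_k \le a_k + d \le R+d+o(1)$, so $M := R+d+1$ works for $k$ large. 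Monotonicity of $\delta$ then upgrades the previous display to
\[
a_k \le M_k - M_k\,\delta(d/M) + o(1).
\]

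The conclusion is extracted by splitting on which of $a_k, b_k$ realizes $M_k$. If $M_k=a_k$, then $a_k\,\delta(d/M) \le o(1)$, and since $\delta(d/M)>0$ by uniform convexity this forces $a_k\to 0$, so the gap conclusion is trivial (since $b_k\to d>0$). If $M_k=b_k$, then $b_k\ge d/2$ and
\[
a_k \le b_k - (d/2)\delta(d/M) + o(1),
\]
which yields the claim with, e.g., $c := (d/4)\delta(d/M)$ after absorbing the $o(1)$ into the choice of $k_0$. The continuous dependence of $c$ on $(d,R)\in(0,\infty)\times[0,\infty]$ follows because $M$ is an explicit continuous function of $(d,R)$, the modulus of convexity can be taken continuous and positive on $(0,2]$, and $c$ extends by $0$ at $R=\infty$. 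The main obstacle is preventing the gap from degenerating: the product $M_k\,\delta(d/M_k)$ could vanish if $M_k$ were allowed to shrink to zero, and it is precisely the triangle-inequality bound $M_k\ge d/2$ that rules this out, forming the conceptual heart of the argument.
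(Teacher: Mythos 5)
Your proof is correct and follows essentially the same route as the paper's: both insert the midpoint $y=\tfrac{1}{2}(x+z)$ as the test element in the definition of $\Delta$-convergence and apply uniform convexity to the pair $x_{k}-x$, $x_{k}-z$, whose difference has the fixed norm $\|x-z\|$. Your version merely makes explicit the bounds $d/2\le M_{k}\le R+d+1$ and the resulting case analysis, where the paper instead normalizes so that $\|x_{k}-x\|<1$ and $\|x-z\|<2$ before invoking the modulus of convexity.
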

\begin{proof}
Given an element $z\ne x$ we first observe that $\liminf_{k\to\infty}\left\Vert x_{k}-z\right\Vert $
must be strictly positive, since otherwise there would be a subsequence
of $\left\{ x_{k}\right\} $ converging in norm to $z$. 

Without loss of generality we may assume that $\|x_{k}-x\|<1$ and
note that it suffices to prove (\ref{eq:NewStrongDom}) for $\|x-z\|<2$.
By uniform convexity, and taking into account (\ref{eq:oldmw-1-1}),
we have

\[
\|x_{k}-x\|\le\|x_{k}+\frac{1}{2}(x+z)\|+o(1)=
\]
\[
\|\frac{1}{2}[(x_{k}-x)+(x_{k}-z)]\|+o(1)\le\|x_{k}-z\|-\delta(\|x-z\|)+o(1),
\]
from which (\ref{eq:NewStrongDom}) is immediate. \end{proof}
\begin{cor}
The $\Delta$-limit in a uniformly convex Banach space is unique. 
\end{cor}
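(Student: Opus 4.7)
The plan is to argue by contradiction, applying Proposition \ref{thm:3eqdef} symmetrically to a bounded sequence with two putative distinct $\Delta$-limits. Suppose $(x_{k})$ is a bounded sequence in the uniformly convex space $X$ with $x_{k}\rightharpoondown x$ and $x_{k}\rightharpoondown x'$ for some $x\neq x'$. The whole point is that Proposition \ref{thm:3eqdef} has already upgraded the qualitative inequality $\|x_{k}-x\|\le\|x_{k}-y\|+o(1)$ into a quantitative strict separation whenever $y\neq x$.

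First I would apply Proposition \ref{thm:3eqdef} to the $\Delta$-convergence $x_{k}\rightharpoondown x$ with the admissible choice $z=x'\neq x$. This yields a constant $c>0$ and an index $k_{0}$ such that
\[
\|x_{k}-x\|\le\|x_{k}-x'\|-c\quad\text{for all }k\ge k_{0}.
\]
Next, by the symmetric role of $x$ and $x'$, I would apply the same proposition to $x_{k}\rightharpoondown x'$ with $z=x$, obtaining $c'>0$ and $k_{0}'$ such that
\[
\|x_{k}-x'\|\le\|x_{k}-x\|-c'\quad\text{for all }k\ge k_{0}'.
\]
Chaining the two inequalities for any $k\ge\max(k_{0},k_{0}')$ gives $\|x_{k}-x\|\le\|x_{k}-x\|-(c+c')$, contradicting $c+c'>0$. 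Hence $x=x'$.

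There is essentially no obstacle: all the analytic work (the invocation of uniform convexity via the midpoint $\tfrac{1}{2}(x+z)$) has already been absorbed into Proposition \ref{thm:3eqdef}, so uniqueness reduces to a two-line formal consequence. The only point worth flagging is the tacit boundedness hypothesis required to apply Proposition \ref{thm:3eqdef}; this is the standard regime for $\Delta$-convergence, and if one wished to dispense with it one would have to redo the midpoint argument from scratch, but in the bounded setting (which is what is used throughout the paper) the corollary is immediate.
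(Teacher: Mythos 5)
Your argument is correct and is exactly the intended one: the paper states the corollary without proof as an immediate consequence of Proposition \ref{thm:3eqdef}, and the symmetric application (or even a single application combined with the defining inequality $\|x_{k}-x'\|\le\|x_{k}-x\|+o(1)$) yields the contradiction just as you describe. Your remark about the tacit boundedness hypothesis is also consistent with the paper's setting, where Proposition \ref{thm:3eqdef} is stated for bounded sequences.
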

It is shown in \cite{Edelstein} that uniformly convex Banach spaces
are asymptotically complete (a metric space is called asymptotically
complete if every bounded sequence in it has an asymptotic center,
see Appendix B). Since every bounded sequence in an asymptotically
complete metric space has a $\Delta$-convergent subsequence by \cite[Theorem 3]{Lim},
we have the following analog of Banach-Alaoglu theorem:
\begin{thm}
\label{thm:newBA}Let $X$ be a uniformly convex Banach space and
let $(x_{k})\subset X$ be a bounded sequence. Then $(x_{k})$ has
a $\Delta$-convergent subsequence.
\end{thm}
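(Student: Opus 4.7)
The plan is to combine the two results cited in the paragraph preceding the statement: Edelstein's theorem that uniformly convex Banach spaces are asymptotically complete, which provides a unique asymptotic center for every bounded sequence, and Lim's diagonal minimization of the asymptotic radius, which isolates a subsequence for which that asymptotic center acts as the $\Delta$-limit.

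First I would record the equivalence already implicit in the introduction and in Appendix B: a bounded sequence $(x_k)$ is $\Delta$-convergent to $x$ if and only if $x$ is an asymptotic center of every subsequence of $(x_k)$. The direction $(\Rightarrow)$ follows from Definition \ref{def: polar_limit} by passing to a subsequence along which both $\|x_k-x\|$ and $\|x_k-y\|$ converge and comparing the limits. For $(\Leftarrow)$, if the inequality $\|x_k-x\|\le\|x_k-y\|+o(1)$ failed at some $y$, then one could extract a subsequence along which both $\|x_k-x\|$ and $\|x_k-y\|$ converge with the latter strictly smaller, contradicting the hypothesis that $x$ is an asymptotic center of that subsequence.

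Given a bounded sequence $(x_k)\subset X$, set
\[
\rho^{\star}:=\inf\bigl\{\inf_{y\in X}\limsup_{j}\|x_{k_j}-y\|\ :\ (x_{k_j})\text{ is a subsequence of }(x_k)\bigr\}.
\]
For each $n\in\N$ pick a subsequence $\sigma_{n}$, nested inside $\sigma_{n-1}$, whose asymptotic radius is at most $\rho^{\star}+1/n$; a standard diagonal extraction then produces a subsequence $(y_j)$ that is eventually contained in every $\sigma_{n}$, so its asymptotic radius is bounded by $\rho^{\star}+1/n$ for every $n$ and hence equals $\rho^{\star}$. Since the asymptotic radius can only decrease when passing to a further subsequence but is bounded below by $\rho^{\star}$, every further subsequence of $(y_j)$ has asymptotic radius exactly $\rho^{\star}$.

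By Edelstein's result the sequence $(y_j)$ has a unique asymptotic center $x^{\star}\in X$. For any further subsequence $(y_{j_l})$, the monotonicity $\limsup_{l}\|y_{j_l}-x^{\star}\|\le\limsup_{j}\|y_j-x^{\star}\|=\rho^{\star}$ shows that $x^{\star}$ attains the asymptotic radius of $(y_{j_l})$, and uniqueness of the asymptotic center (a consequence of uniform convexity, in the spirit of Proposition \ref{thm:3eqdef}) forces $x^{\star}$ to be precisely the asymptotic center of $(y_{j_l})$ as well. The equivalence recorded in the second paragraph then yields $y_j\rightharpoondown x^{\star}$. The only delicate point in this argument is the diagonal construction producing a subsequence whose asymptotic-radius minimality is inherited by every further subsequence; apart from this, uniform convexity enters solely through the existence and uniqueness of asymptotic centers.
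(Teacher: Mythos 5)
Your overall strategy matches the paper's: the paper proves this statement by citing Edelstein for the existence of asymptotic centers and Lim's $\Delta$-compactness theorem, whose proof (reproduced in Appendix B from \cite{GK}) is precisely the extraction of a \emph{regular} subsequence followed by the identification of its asymptotic center with the $\Delta$-limit. Your equivalence between $\Delta$-convergence and ``asymptotic center of every subsequence,'' and your final step using uniqueness of asymptotic centers in a uniformly convex space, are both correct.

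However, your diagonal construction has a genuine gap. You fix $\rho^{\star}$ as the infimum of the asymptotic radius over \emph{all} subsequences of $(x_{k})$ and then ask for nested subsequences $\sigma_{n}\prec\sigma_{n-1}$ with $\mathrm{rad}(\sigma_{n})\le\rho^{\star}+1/n$. Such a $\sigma_{n}$ need not exist for $n\ge2$: once you have passed to $\sigma_{1}$, the subsequences realizing radii near $\rho^{\star}$ may no longer be available, and the infimum of the radius over subsequences of $\sigma_{1}$ can be strictly larger than $\rho^{\star}$. Concretely, in $\ell^{2}$ interleave the zero vector with an orthonormal basis, $x_{2k}=0$, $x_{2k+1}=e_{k}$. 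Then $\rho^{\star}=0$ (via the zero subsequence), and $\sigma_{1}=(e_{k})$ is an admissible first choice since its asymptotic radius is $1\le\rho^{\star}+1$; but every subsequence of $(e_{k})$ has radius exactly $1$, so no $\sigma_{2}\prec\sigma_{1}$ with radius at most $1/2$ exists. The repair is the construction reproduced in Appendix B: at each stage redefine the target as $r_{i}=\inf\{\mathrm{rad}(v):v\prec\sigma_{i}\}$, select $\sigma_{i+1}\prec\sigma_{i}$ with $\mathrm{rad}(\sigma_{i+1})<r_{i}+2^{-i-1}$, and observe that the $r_{i}$ increase to a limit $r$ which becomes the common asymptotic radius of all subsequences of the diagonal. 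The resulting subsequence is regular with radius $r$, possibly strictly larger than $\rho^{\star}$ --- but regularity, not minimality of the radius, is all that your Edelstein-plus-uniqueness argument actually uses, so the remainder of your proof goes through unchanged.
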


\subsection{Uniform boundedness theorem}

It is well-known that for every $x\in X\setminus\{0\}$ there exists
an element $x^{*}\in X$, called a conjugate of $x$, such that $\|x^{*}\|=1$
and $\langle x^{*},x\rangle=\|x\|$. 

If $X^{*}$ is strictly convex, namely, if
\[
\xi,\eta\in X^{*},\xi\neq\eta,\text{\;}\|\xi\|=\|\eta\|=1,\;\Longrightarrow\|t\xi+(1-t)\eta\|<1\text{ for all }t\in(0,1),
\]
(in particular, when $X^{*}$ is uniformly convex or, equivalently,
when $X$ is uniformly smooth, see Appendix A), then the element $x^{\ast}$,
as one can immediately verify by contradiction, is unique. 
\begin{thm}
\label{thm:newBS}Let $X$ be a uniformly smooth and uniformly convex
Banach space, and let $(x_{k})\subset X$ be a $\Delta$-convergent
sequence. Then the sequence $(x_{k})$ is bounded.\end{thm}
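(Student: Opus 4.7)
The plan is to argue by contradiction: assuming $x_k\rightharpoondown x$ in $X$ with $\sup_k\|x_k\|=\infty$, I would exhibit a second, distinct $\Delta$-limit of $(x_k)$, contradicting the uniqueness of the $\Delta$-limit implicit in $\Delta$-convergence. Passing to a subsequence (which retains the same $\Delta$-limit) and translating by $-x$ reduces matters to the case $x=0$ and $\|x_k\|\to\infty$.

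The central ingredient is an asymptotic linearization of $\|x_k-v\|$ supplied by uniform smoothness. Because $X^{*}$ is uniformly convex (dual to the uniformly smooth $X$, see Appendix A), hence in particular strictly convex, the duality map $J\colon X\setminus\{0\}\to X^{*}$ sending a point to its unique norming functional is single-valued; set $\xi_k:=J(x_k/\|x_k\|)$, so $\|\xi_k\|=1$ for every $k$. The defining quantitative property of uniform smoothness, $\rho_X(t)/t\to 0$ as $t\to 0$, yields the uniform Fr\'echet expansion $\|u+tw\|=1+t\langle J(u),w\rangle+o(t)$ on unit vectors $u,w$ as $t\to 0$. Applied at $u=x_k/\|x_k\|$ with increment $-v/\|x_k\|\to 0$ and rescaled by $\|x_k\|$, this gives, for each fixed $v\in X$,
\[
\|x_k-v\|=\|x_k\|-\langle\xi_k,v\rangle+o(1)\qquad(k\to\infty).
\]

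Substituting this expansion into the $\Delta$-convergence inequality $\|x_k\|\le\|x_k-v\|+o(1)$ forces $\langle\xi_k,v\rangle\le o(1)$ for every $v\in X$; applying the same reasoning to $-v$ gives the reverse, so $\langle\xi_k,v\rangle\to 0$ for every $v$, i.e., $\xi_k\rightharpoonup 0$ in $X^{*}$. The expansion then shows that for any $w\in X$,
\[
\|x_k-w\|-\|x_k-y\|=\langle\xi_k,y-w\rangle+o(1)\longrightarrow 0
\]
for each fixed $y\in X$, hence $\|x_k-w\|\le\|x_k-y\|+o(1)$ for all $y$. Thus every $w\in X$ is a $\Delta$-limit of $(x_k)$; choosing any $w\ne 0$ yields the required contradiction.

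The principal technical obstacle is establishing the linearization with a genuinely $o(1)$ remainder: the modulus of smoothness is evaluated at the vanishing scale $t_k=\|v\|/\|x_k\|$, and the estimate $\|x_k\|\,\rho_X(t_k)=\|v\|\cdot\rho_X(t_k)/t_k\to 0$ is precisely what uniform smoothness furnishes. Once this expansion is secured, the rest of the argument is elementary; uniform convexity of $X$ itself enters only indirectly, through its dual role in ensuring $\xi_k$ is a single element of $X^{*}$.
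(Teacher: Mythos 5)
Your reduction to $x=0$, $\|x_k\|\to\infty$, the linearization $\|x_k-v\|=\|x_k\|-\langle\xi_k,v\rangle+o(1)$ furnished by uniform smoothness, and the resulting conclusion $\langle\xi_k,v\rangle\to0$ for each fixed $v$ all coincide with the first half of the paper's own argument (there $\xi_k$ is written $x_k^{*}$ and the remainder is tracked explicitly through a modulus $\eta$ with $\eta(t)/t\to0$). The gap is in the final step. The contradiction you invoke, uniqueness of the $\Delta$-limit, is only established for \emph{bounded} sequences: it is the corollary to Proposition \ref{thm:3eqdef}, and that proposition explicitly assumes boundedness. The uniform-convexity midpoint estimate behind it genuinely needs that hypothesis: if $M_k:=\max\left\{ \|x_k-x\|,\|x_k-z\|\right\} \to\infty$, the gain it produces is $M_k\,\delta(\|x-z\|/M_k)=\|x-z\|\,\delta(s_k)/s_k$ with $s_k\to0$, which tends to $\|x-z\|\lim_{s\to0}\delta(s)/s$ and vanishes already in a Hilbert space. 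So there is no a priori uniqueness statement for unbounded sequences available to you; indeed your own computation shows that an unbounded $\Delta$-convergent sequence in such a space would have \emph{every} point of $X$ as a $\Delta$-limit, so ``uniqueness in the unbounded case'' is essentially the assertion you are trying to prove, and appealing to it is circular.

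The argument can be closed, but only by not discarding the rate hidden in your $o(1)$. This is what the paper does: uniform smoothness gives the quantitative bound $\langle x_k^{*},z\rangle\ge-\eta(t_k)/t_k$ for $\|z\|\le1$ and $k$ large, where $t_k=1/\|x_k\|$, whence $|\langle\psi(\|x_k\|)x_k^{*},z\rangle|\le1$ with $\psi(t)=t^{-1}/\eta(t^{-1})\to\infty$; the Uniform Boundedness Principle then forces $\psi(\|x_k\|)=\|\psi(\|x_k\|)x_k^{*}\|$ to be bounded, which is the desired contradiction. Your purely qualitative conclusion, $\xi_k\rightharpoonup0$ with $\|\xi_k\|=1$, is by itself perfectly consistent (any weakly null normalized sequence of functionals realizes it) and yields no contradiction; note also that the $o(1)$ coming from the definition of $\Delta$-convergence depends on $v$ with no uniform rate, so some care is needed even in reinstating the quantitative version.
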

\begin{proof}
It suffices to prove the theorem for the case $x_{k}\rightharpoondown0$,
since, once we prove that, from $x_{k}\rightharpoondown x$ follows
$x_{k}-x\rightharpoondown0$ and thus $x_{k}-x$ is bounded. Assume
that $\|x_{k}\|\to\infty$. Since $X$ is uniformly smooth, there
exists a function $\eta:[0,1]\to[0,\infty)$, $\text{\ensuremath{\lim}}_{t\to0}\eta(t)/t=0$,
such that (see \cite[p. 61]{LinTz}) 
\[
\left|\|x+y\|-\|x\|-\langle x^{*},y\rangle\right|\le\eta(\|y\|),\text{ whenever }\|x\|=1\text{ and }\|y\|\le1.
\]
  Then, using the notation $\omega(x,y)=\|x+y\|-\|x\|-\langle x^{*},y\rangle$,
we have 
\[
\|x+y\|^{2}-\|x\|^{2}=(\|x+y\|-\|x\|)(\|x+y\|-\|x\|+2\|x\|)=(\omega(x,y)+\langle x^{*},y\rangle)^{2}+2(\omega(x,y)+\langle x^{*},y\rangle).
\]
Substitute now $x=\frac{x_{k}}{\|x_{k}\|}$ and $y=\frac{z}{\|x_{k}\|}$
with an arbitrary vector $z$. Then, by Proposition \ref{thm:3eqdef},
we have 
\[
0\le\|x_{k}+z\|^{2}-\|x_{k}\|^{2}=\alpha_{k}^{2}+2\|x_{k}\|\alpha_{k}
\]
for all $k$ sufficiently large, where

\[
\alpha_{k}=\|x_{k}\|\omega(\frac{x_{k}}{\|x_{k}\|},\frac{z}{\|x_{k}\|})+\langle x_{k}^{*},z\rangle.
\]
Consequently, either $\alpha_{k}\ge0$ or $\alpha_{k}\le-2\|x_{k}\|\to-\infty$.
The latter case can be easily ruled out, since $\|x_{k}^{*}\|=1$,
$\langle x_{k}^{*},z\rangle$ is bounded, $\|x_{k}\||\omega(\frac{x_{k}}{\|x_{k}\|},\frac{z}{\|x_{k}\|})|\to0$
as $\|x_{k}\|\to\infty$, and so $\alpha_{k}$ is bounded. Therefore
we have necessarily, for large $k$, 
\[
\|x_{k}\|\omega(\frac{x_{k}}{\|x_{k}\|},\frac{z}{\|x_{k}\|})+\langle x_{k}^{*},z\rangle\ge0,
\]
and, thus, 
\[
\langle x_{k}^{*},z\rangle\ge-\eta(t_{k})/t_{k},
\]
 where $t_{k}=1/\|x_{k}\|$. In other words, we have $|\langle\psi(\|x_{k}\|)x_{k}^{*},z\rangle|\le1$,
for $k$ sufficiently large, where $\psi(t)=\frac{t^{-1}}{\eta(t^{-1})}$
satisfies $\psi(t)\to\infty$ when $t\to\infty$. By the Uniform Boundness
Principle the sequence $\psi(\|x_{k}\|)$ is bounded, but this contradicts
to the assumption $\|x_{k}\|\to\infty$, which proves the theorem.
\end{proof}
Note that without the condition of uniform smoothness, $\Delta$-convergent
sequences are not necessarily bounded. See \cite[Example 3.1]{st3}.

\subsection{Characterization of $\Delta$-convergence in terms of duality map}
\begin{lem}
\label{lem:additive} Let $X$ be a Banach space. If $(x_{k})_{k\in\mathbb{N}}$
is a bounded sequence, $x_{k}\rightharpoondown x$ and $y_{k}\to y$,
then $x_{k}+y_{k}\rightharpoondown x+y$.\end{lem}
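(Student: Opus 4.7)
The plan is to verify the defining inequality (\ref{eq:oldmw-1-1}) for the sequence $(x_k+y_k)$ with candidate $\Delta$-limit $x+y$ directly from the definition, using nothing more than the triangle inequality together with the fact that $\|y_k-y\|=o(1)$. Uniform convexity is not needed for this lemma; boundedness of $(x_k)$ is convenient because it automatically makes $(x_k+y_k)$ bounded and therefore a legitimate candidate for having a $\Delta$-limit at all.

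Fix an arbitrary test element $w\in X$. First, by the triangle inequality,
\[
\|(x_k+y_k)-(x+y)\|\le\|x_k-x\|+\|y_k-y\|=\|x_k-x\|+o(1).
\]
Second, since $x_k\rightharpoondown x$, Definition \ref{def: polar_limit} applied to the test point $z:=w-y\in X$ gives
\[
\|x_k-x\|\le\|x_k-(w-y)\|+o(1)=\|(x_k+y)-w\|+o(1).
\]
Third, a second application of the triangle inequality together with $y_k\to y$ yields
\[
\bigl|\|(x_k+y)-w\|-\|(x_k+y_k)-w\|\bigr|\le\|y-y_k\|=o(1),
\]
so that $\|(x_k+y)-w\|=\|(x_k+y_k)-w\|+o(1)$.

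Chaining the three estimates gives $\|(x_k+y_k)-(x+y)\|\le\|(x_k+y_k)-w\|+o(1)$ for every $w\in X$, which is exactly the condition that $x+y$ is a $\Delta$-limit of $(x_k+y_k)$. There is no real obstacle here; the only point to keep in mind is to choose the test element for the hypothesis $x_k\rightharpoondown x$ as $z=w-y$ rather than $w$ itself, so that the shift by $y$ can be absorbed into the $(y_k-y)\to 0$ error.
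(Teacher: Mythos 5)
Your proof is correct, and it is essentially the same argument as the paper's: the paper reduces to $x=y=0$ and then chains $\|x_k+y_k\|=\|x_k\|+o(1)\le\|x_k-z\|+o(1)=\|x_k+y_k-z\|+o(1)$, which is exactly your three-step chain with the translation by $y$ carried out explicitly via the test point $z=w-y$ instead of being absorbed into a preliminary normalization.
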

\begin{proof}
It suffices to prove the assertion for $x=y=0$. let $z\in X$. Then

\[
\|x_{k}+y_{k}\|=\|x_{k}\|+o(1)\le\|x_{k}-z\|+o(1)=\|x_{k}+y_{k}-z\|+o(1),
\]
which proves the lemma.\end{proof}
\begin{lem}
\label{lem:energy} Let $X$ be a uniformly convex Banach space with
the modulus of convexity $\delta$. If $u_{k}\rightharpoondown u$
in $X$ and $\|u_{k}\|\le1$ for all $k\in\mathbb{N}$, then $\left\Vert u\right\Vert <2$
and, for all sufficiently large $k$, 
\begin{equation}
\|u_{k}\|\ge\|u_{k}-u\|+\delta(\|u\|).\label{eq:energy}
\end{equation}
\end{lem}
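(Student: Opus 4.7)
The plan is to derive both assertions from Proposition \ref{thm:3eqdef}, which supplies a strict (i.e.\ $o(1)$-free) domination, combined with uniform convexity. Both claims are trivial when $u=0$, so I assume $u\ne 0$ throughout.

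For $\|u\|<2$, I apply Proposition \ref{thm:3eqdef} with $z=0\ne u$: there exist $k_{0}\in\mathbb{N}$ and $c>0$ such that $\|u_{k}-u\|\le\|u_{k}\|-c$ for every $k\ge k_{0}$. The triangle inequality then yields $\|u\|\le\|u_{k}\|+\|u_{k}-u\|\le 2\|u_{k}\|-c\le 2-c<2$. As byproducts, $\|u_{k}\|\ge(\|u\|+c)/2>0$ and $\|u_{k}-u\|\le\|u_{k}\|$ for all $k\ge k_{0}$, both of which are needed below.

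The heart of the proof of the main inequality is to bound $\|u_{k}-u/2\|$ from above by uniform convexity, exploiting that $u_{k}-u/2=\frac{1}{2}[u_{k}+(u_{k}-u)]$ is the midpoint of two vectors whose difference is $u$. By the previous step, both $\|u_{k}\|$ and $\|u_{k}-u\|$ are at most $R_{k}:=\|u_{k}\|\le 1$ for $k\ge k_{0}$. Normalizing by $R_{k}$ and applying the definition of the modulus of convexity to $u_{k}/R_{k}$ and $(u_{k}-u)/R_{k}$ gives
\[
\|u_{k}-u/2\|\le\|u_{k}\|\bigl(1-\delta(\|u\|/\|u_{k}\|)\bigr).
\]
The well-known monotonicity of $\delta(\epsilon)/\epsilon$ on $(0,2]$ (recorded in Appendix A), together with $\|u_{k}\|\le 1$ and $\|u\|/\|u_{k}\|\ge\|u\|$, yields $\|u_{k}\|\,\delta(\|u\|/\|u_{k}\|)\ge\delta(\|u\|)$, hence $\|u_{k}-u/2\|\le\|u_{k}\|-\delta(\|u\|)$.

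Finally, to transfer this bound on $\|u_{k}-u/2\|$ into one on $\|u_{k}-u\|$ without leaving an $o(1)$ slack, I invoke Proposition \ref{thm:3eqdef} once more, now with $z=u/2\ne u$: for all sufficiently large $k$, $\|u_{k}-u\|\le\|u_{k}-u/2\|-c'$ with some $c'>0$. Chaining the two inequalities gives $\|u_{k}-u\|\le\|u_{k}\|-\delta(\|u\|)-c'<\|u_{k}\|-\delta(\|u\|)$, which is even strictly stronger than the stated conclusion. The main technical point is the monotonicity of $\delta(\epsilon)/\epsilon$ that lets one compare $\|u_{k}\|\,\delta(\|u\|/\|u_{k}\|)$ with $\delta(\|u\|)$; the rest is an orchestrated double application of Proposition \ref{thm:3eqdef}, used first with $z=0$ to pin down $\|u\|<2$ and the ordering $\|u_{k}-u\|\le\|u_{k}\|$, and then with $z=u/2$ to absorb the $o(1)$.
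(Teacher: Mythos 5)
Your proof is correct and follows essentially the same route as the paper's: both exploit that $u_k-\tfrac12 u$ is the midpoint of $u_k$ and $u_k-u$, apply uniform convexity rescaled to the ball of radius $\|u_k\|$ together with the monotonicity of $\delta(\epsilon)/\epsilon$ (the paper packages this as inequality (\ref{eq:UltraSuperUC}) with $C_1=\|u_k\|$, $C_2=1$), and then use $\Delta$-convergence via Proposition \ref{thm:3eqdef} to dominate $\|u_k-u\|$ by $\|u_k-\tfrac12 u\|$. Your version is merely a bit more explicit in citing Proposition \ref{thm:3eqdef} for the preliminary inequality $\|u_k-u\|\le\|u_k\|$ and for absorbing the $o(1)$.
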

\begin{proof}
We can suppose that $u\ne0$ since the result is a triviality for
$u=0$. Note that for $k$ sufficiently large, $\|u_{k}-u\|<\|u_{k}\|$.
This inequality implies that $\left\Vert u\right\Vert <2\left\Vert u_{k}\right\Vert \le2$
and it also implies that $u_{k}\ne0$ for these values of $k$. Thus
we may apply (\ref{eq:UltraSuperUC}) with $C_{1}=\left\Vert u_{k}\right\Vert $
and $C_{2}=1$ to the elements $u_{k}$ and $u_{k}-u$ to obtain that
\[
\left\Vert u_{k}-\frac{1}{2}u\right\Vert =\left\Vert \frac{u_{k}+\left(u_{k}-u\right)}{2}\right\Vert \le\left\Vert u_{k}\right\Vert -\delta\left(\left\Vert u\right\Vert \right)\,.
\]

Finally, since $u_{k}\rightharpoondown u$, one also has $\|u_{k}-u\|\le\|u_{k}-\frac{1}{2}u\|$
for sufficiently large $k$ and (\ref{eq:energy}) follows.%

\end{proof}
We have the following characterization of $\Delta$-convergence by
means of the duality map $x\mapsto x^{\ast}$.
\begin{thm}
\label{thm:conj}Let $X$ be a uniformly convex and uniformly smooth
Banach space. Let $x\in X$ and let $(x_{k})_{k\in\N}\subset X$ be
a bounded sequence such that $\liminf\|x_{k}-x\|>0$. Then $x_{k}\rightharpoondown x$
if and only if $(x_{k}-x)^{*}\rightharpoonup0$. \end{thm}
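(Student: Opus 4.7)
The plan is to reduce to the case $x=0$ by Lemma~\ref{lem:additive}, so that we must show $x_k \rightharpoondown 0$ if and only if $x_k^{\ast} \rightharpoonup 0$, under the standing hypotheses that $(x_k)$ is bounded and $\liminf \|x_k\| > 0$ (the latter ensures that the duality map is defined on a cofinite tail of $(x_k)$, and uniqueness of $x_k^{\ast}$ follows from strict convexity of $X^{\ast}$, itself a consequence of uniform smoothness of $X$).

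The easy direction $(\Leftarrow)$ uses only $\|x_k^{\ast}\|_{X^{\ast}} = 1$. For any $z \in X$,
\[
\|x_k - z\| \ge \langle x_k^{\ast}, x_k - z \rangle = \|x_k\| - \langle x_k^{\ast}, z\rangle,
\]
and the right-hand side equals $\|x_k\| + o(1)$ once $x_k^{\ast} \rightharpoonup 0$. This is precisely the defining inequality for $x_k \rightharpoondown 0$, and no smoothness is needed.

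The hard direction $(\Rightarrow)$ is where uniform smoothness enters. The plan is to exploit the same expansion already used in the proof of Theorem~\ref{thm:newBS}: there is a modulus $\eta$ with $\eta(t)/t \to 0$ at $0$ such that, applying the expansion with $x = x_k/\|x_k\|$ and $y = tz/\|x_k\|$ for a free parameter $t > 0$, one obtains
\[
\bigl|\|x_k + tz\| - \|x_k\| - t\langle x_k^{\ast}, z\rangle\bigr| \le \|x_k\|\, \eta(t\|z\|/\|x_k\|).
\]
Combine the resulting upper bound with the $\Delta$-convergence inequality $\|x_k + tz\| - \|x_k\| \ge -o(1)$ (obtained by applying Definition~\ref{def: polar_limit} with $y = -tz$) to get
\[
\langle x_k^{\ast}, z\rangle \ge -\frac{o(1)}{t} - \frac{\|x_k\|}{t}\,\eta\!\left(\frac{t\|z\|}{\|x_k\|}\right).
\]

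The main obstacle, and the reason for introducing the auxiliary parameter $t$, is that $\eta(\|z\|/\|x_k\|)$ by itself does not vanish since $\|x_k\|$ stays bounded. Rewriting the correction term as $\|z\|\cdot \eta(s_k)/s_k$ with $s_k = t\|z\|/\|x_k\|$, the assumption $\liminf \|x_k\| > 0$ together with boundedness of $(x_k)$ makes $s_k$ uniformly small (in $k$, for large $k$) as $t \to 0^{+}$, so the correction is $\le \varepsilon$ for any prescribed $\varepsilon > 0$ by choosing $t$ small enough. Sending $k \to \infty$ at fixed $t$ kills the $o(1)/t$ term, and then sending $t \to 0^{+}$ gives $\liminf_k \langle x_k^{\ast}, z\rangle \ge 0$. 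Applying the same argument with $z$ replaced by $-z$ yields the matching upper bound, so $\langle x_k^{\ast}, z\rangle \to 0$ for every $z \in X$, which is exactly $x_k^{\ast} \rightharpoonup 0$.
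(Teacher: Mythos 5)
Your proof is correct, and while the sufficiency direction coincides with the paper's (both use only $\|x_k^{*}\|=1$ and $\langle x_k^{*},x_k\rangle=\|x_k\|$), your necessity argument takes a genuinely different route. The paper starts from $\|x_k\|\le\|x_k-y\|$ (via Proposition~\ref{thm:3eqdef}), deduces $\langle(x_k-y)^{*},y\rangle\le0$, and then transfers this to $x_k^{*}$ by invoking Lemma~\ref{lem:dualconv}, the uniform norm-continuity of the duality map on sets bounded away from the origin; the smallness of $\|y\|$ is what controls $\|(x_k-y)^{*}-x_k^{*}\|$. You instead bypass the duality map's continuity entirely and work with the quantitative uniform-smoothness expansion with modulus $\eta$ (the same estimate the paper quotes in the proof of Theorem~\ref{thm:newBS}), reading $x_k^{*}$ as the derivative of the norm at $x_k$ and the $\Delta$-limit inequality $\|x_k+tz\|\ge\|x_k\|-o(1)$ as approximate first-order optimality; the auxiliary parameter $t$ and the rewriting of the error as $\|z\|\,\eta(s_k)/s_k$ are handled correctly, with the hypothesis $\liminf\|x_k\|>0$ supplying exactly the upper bound on $s_k$ needed (the boundedness of $(x_k)$, which you also cite there, is not actually used at that step). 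What each approach buys: the paper's is shorter once Lemma~\ref{lem:dualconv} is available and fits its general policy of phrasing everything through the duality map; yours is more self-contained, reuses machinery already deployed in Theorem~\ref{thm:newBS}, and makes the variational meaning of the equivalence transparent. Both ultimately rest on uniform smoothness, and both are valid.
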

\begin{proof}
Without loss of generality we  need only to consider the case $x=0$. 

\textbf{\emph{Sufficiency}}\textbf{.} Suppose that $x_{k}^{*}\rightharpoonup0$.
Then for any $y\in X$, $\left\langle x_{k}^{*},y\right\rangle \to0$
and so 
\[
\left\Vert x_{k}\right\Vert =\left\langle x_{k}^{*},x_{k}\right\rangle =\left|\left\langle x_{k}^{*},x_{k}-y\right\rangle +\left\langle x_{k}^{*},y\right\rangle \right|\le\left\Vert x_{k}-y\right\Vert +o(1)\,,
\]
 i.e $x_{k}\rightharpoondown0$.

\textbf{\emph{Necessity. }}Suppose that $x_{k}\rightharpoondown0$.
By Proposition \ref{thm:3eqdef}, for any $y\in X$, there exists
an integer $k(y)$ such that $\|x_{k}\|\le\|x_{k}-y\|$ for all $k\ge k(y)$.
Then

\begin{eqnarray*}
\|x_{k}\| & \le & \|x_{k}-y\|=\left\langle \left(x_{k}-y\right)^{*},x_{k}-y\right\rangle =\left\langle \left(x_{k}-y\right)^{*},x_{k}\right\rangle -\left\langle \left(x_{k}-y\right)^{*},y\right\rangle \\
 & \le & \left\Vert x_{k}\right\Vert -\left\langle \left(x_{k}-y\right)^{*},y\right\rangle ,
\end{eqnarray*}
Consequently we have 
\[
\left\langle \left(x_{k}-y\right)^{*},y\right\rangle \le0\mbox{ for all }k\ge k(y)\,.
\]
Since $\liminf\left\Vert x_{k}\right\Vert >0$, we may assume that
$k(y)$ is large enough so that $\left\Vert x_{k}\right\Vert \ge2\lambda$
for some positive constant $\lambda$ whenever $k\ge k(y)$. So, if
we consider only those $y$ which satisfy $\left\Vert y\right\Vert \le\lambda$
and those $x_{k}$ for which $k\ge k(y)$, we can assert that $x_{k}$
and $x_{k}-y$ are both contained in the set $E=\left\{ x\in X:\left\Vert x\right\Vert \ge\lambda\right\} $
and therefore deduce from Lemma \ref{lem:dualconv}, for each $\epsilon\in(0,1/4)$,
that $\left\Vert \left(x_{k}-y\right)^{*}-x_{k}^{*}\right\Vert \le\epsilon$
whenever $0<\left\Vert y\right\Vert \le\min\left\{ \frac{3\lambda}{2}\delta\left(\epsilon\right),\frac{\lambda}{2}\right\} $.
For such choices of $y$ we will therefore have 
\[
\left\langle x_{k}^{*},\frac{y}{\left\Vert y\right\Vert }\right\rangle \le\epsilon\mbox{ for all \ensuremath{k\ge k(y)}.}
\]
Applying the same reasoning to the element $-y$, we obtain that $\left|\left\langle x_{k}^{*},\frac{y}{\left\Vert y\right\Vert }\right\rangle \right|\le2\epsilon$
whenever $0<\left\Vert y\right\Vert \le\min\left\{ \frac{3\lambda}{2}\delta\left(\epsilon\right),\frac{\lambda}{2}\right\} $
and $k\ge k_{0}=\max\left\{ k(y),k(-y)\right\} $. In other words,
given any $w\in X$ with $\left\Vert w\right\Vert =1$, we know that
$\left|\left\langle x_{k}^{*},w\right\rangle \right|\le2\epsilon$
for all $k\ge k_{0}(w,\epsilon)$ for some sufficiently large $k_{0}(w,\epsilon)$.
Consequently, $x_{k}^{*}\rightharpoonup0$. \end{proof}
\begin{cor}
\label{cor:lp}Let $X$ be either a Hilbert space or the $\ell^{p}$-space
with $1<p<\infty$, and let $(x_{n})$ be a sequence in $X$. Then
$x_{n}\rightharpoonup x$ if and only if $x_{n}\rightharpoondown x$.\end{cor}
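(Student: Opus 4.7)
The plan is to derive the corollary directly from Theorem \ref{thm:conj} by exploiting the explicit form of the duality map $u\mapsto u^{*}$ in the two spaces under consideration.

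After a translation I may assume $x=0$, so the assertion becomes $x_{n}\rightharpoonup 0\Longleftrightarrow x_{n}\rightharpoondown 0$. Both modes of convergence yield norm-boundedness (Banach--Steinhaus for weak convergence; Theorem \ref{thm:newBS} for $\Delta$-convergence). Since $H$ and $\ell^{p}$ are reflexive, Banach--Alaoglu furnishes weakly convergent subsequences and Theorem \ref{thm:newBA} furnishes $\Delta$-convergent subsequences of any bounded sequence; both limits are unique. A standard subsequence--subsubsequence principle then reduces the corollary to the following key claim: \emph{if $(x_{n})$ is bounded and satisfies both $x_{n}\rightharpoonup y$ and $x_{n}\rightharpoondown z$, then $y=z$.}

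To prove the key claim I first dispose of the degenerate case $\liminf\|x_{n}-z\|=0$, which forces $y=z$ by extracting a norm-convergent subsequence to $z$. Otherwise Theorem \ref{thm:conj} applies and gives $(x_{n}-z)^{*}\rightharpoonup 0$ in $X^{*}$. Passing to a subsequence I may assume $\|x_{n}-z\|\to c\in(0,\infty)$. In a Hilbert space the duality map is $u\mapsto u/\|u\|$, so $(x_{n}-z)^{*}=(x_{n}-z)/\|x_{n}-z\|$; multiplying by the convergent scalar $\|x_{n}-z\|\to c$ yields $x_{n}-z\rightharpoonup 0$, and comparison with $x_{n}-z\rightharpoonup y-z$ gives $y=z$. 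In $\ell^{p}$ the duality map acts coordinatewise, $(u)_{i}^{*}=\|u\|^{1-p}|u_{i}|^{p-1}\mathrm{sgn}(u_{i})$, and weak convergence in the reflexive sequence spaces $\ell^{p}$ and $\ell^{p/(p-1)}$ amounts to boundedness together with coordinatewise convergence. Hence $(x_{n}-z)^{*}\rightharpoonup 0$ forces $(x_{n}-z)_{i}^{*}\to 0$ for every $i$; dividing by $\|x_{n}-z\|^{1-p}\to c^{1-p}\in(0,\infty)$ gives $(x_{n})_{i}\to z_{i}$ coordinatewise, which, combined with the coordinatewise convergences $(x_{n})_{i}\to y_{i}$ inherited from $x_{n}\rightharpoonup y$, yields $y_{i}=z_{i}$ for all $i$, i.e.\ $y=z$.

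The main technical obstacle is that $\|x_{n}-z\|$ need not converge, so extracting a subsequence along which it does is indispensable before one can identify the weak limit of $(x_{n}-z)^{*}$ with a scalar multiple of the weak limit of $x_{n}-z$. Once this step is arranged, the explicit form of the duality map in each of the two classes of spaces makes the conclusion immediate.
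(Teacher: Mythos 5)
Your proof is correct and rests on the same core ingredients as the paper's own argument: Theorem \ref{thm:conj} together with the explicit form of the duality map ($u\mapsto u/\|u\|$ in Hilbert space, componentwise in $\ell^{p}$) and the fact that weak convergence in these spaces is detected by boundedness plus componentwise (or inner-product) convergence. The only difference is organizational --- you route the argument through a subsequence--subsubsequence extraction and a uniqueness-of-limits claim, invoking Banach--Alaoglu and Theorem \ref{thm:newBA}, whereas the paper proves each implication directly --- and this changes nothing essential.
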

\begin{proof}
We may assume that $\liminf\|x_{n}-x\|>0$, since for subsequences
that converge to $x$ in norm the result is trivial.

Let $X$ be a Hilbert space and recall that we are using the definition
of conjugate dual with the unit norm. If $x_{n}\rightharpoonup x$,
then for any $y\in X$, 
\[
|(x_{n}-x)^{*},y)|=\left|\left(\frac{x_{n}-x}{\|x_{n}-x\|},y\right)\right|\le\frac{1}{\liminf\|x_{n}-x\|}\limsup\left|\left(x_{n}-x,y\right)\right|+o(1)\to0.
\]
Conversely, if $x_{n}\rightharpoondown x$, then for any $y\in X$,
taking into account that $(x_{n})$ is bounded by Theorem \ref{thm:newBS},
we have 
\[
|(x_{n}-x,y)|=\|x_{n}-x\||((x_{n}-x)^{*},y)|\le(\sup\|x_{n}\|+\|x\|)|((x_{n}-x)^{*},y)|\to0.
\]

Let now $X=\ell^{p}$. If $x_{n}\rightharpoonup x$, then the sequence
$(x_{n})$ is bounded and converges to $x$ by components. Then $(x_{n}-x)^{*}\rightharpoonup0$
in $\ell^{p'}$, and by Theorem \ref{thm:conj} it follows that $x_{n}\rightharpoondown x$. 

Conversely, if $x_{n}\rightharpoondown x$, then by Theorem \ref{thm:conj}
$(x_{n}-x)^{*}\rightharpoonup0$ in $\ell^{p'}$, and then $x_{n}$
converges to $x$ by components. Since by Theorem \ref{thm:newBS}
$\Delta$-convergent sequences are bounded, this implies that $x_{n}\rightharpoondown x$. \end{proof}
\begin{rem}
Another proof that weak and $\Delta$-convergence in Hilbert space
coincide can be infered from the definition of $\Delta$-convergence,
Proposition \ref{thm:3eqdef} and the elementary identity 
\[
\|x_{n}-x+y\|^{2}=\|x_{n}-x\|^{2}+\|y\|^{2}+2(x_{n}-x,y).
\]

\end{rem}

\begin{rem}
From Theorem \ref{thm:conj} it follows that $\Delta$-limit is not
additive, that is, the relation $\stackrel{\rightharpoondown}{\lim}(x_{n}+y_{n})=\stackrel{\rightharpoondown}{\lim}x_{n}+\stackrel{\rightharpoondown}{\lim}y_{n}$
is generally false. Consider, for example, $L^{4}((0,9))$. Set $x_{0}(t)=2$
for $t\in(0,1]$ and $x_{0}(t)=-1$ for $t\in(1,9]$. Define $x_{n}(t)=x_{0}(nt)$
when $\text{0<t\ensuremath{\le\frac{9}{n}}}$ and extend it periodically
to $(0,9)$. Set $y_{0}(t)=-1$ for $t\in(0,\frac{9}{2}]$ and $y_{0}(t)=1$
for $t\in(\frac{9}{2},9]$ and define $y_{n}$ similarly to $x_{n}$.
Observe that $x_{n}^{3}\rightharpoonup0,$ $y_{n}^{3}\rightharpoonup0$,
but $(x_{n}+y_{n})^{3}\rightharpoonup\frac{1}{2}$.
\end{rem}

\begin{rem}
Using Theorem \ref{thm:conj} one can also show that norms are not
necessarily lower semicontinuous with respect to $\Delta$-convergence.
Let $(v_{k})$ be a normalized sequence in $L^{4}([0,1])$, such that
$v_{k}\rightharpoondown0$ and $v_{k}\rightharpoonup a$ where $a$
is a positive constant (one constructs such sequence by fixing a step
function $v_{0}$ such that $\int v_{0}^{3}=0$ and $\int v_{0}>0$,
rescaling it by the factor $k$ and extending it periodically). By
Theorem \ref{thm:conj}, $v_{k}^{3}\rightharpoonup0$ in $L^{4/3}$.
Let $u_{k}=u-tv_{k}$, $t>0$ with some positive function $u$. Then
\[
\int u^{4}-\int u_{k}^{4}=4t^{3}\int uv_{k}^{3}-6t^{2}\int u^{2}v_{k}^{2}+4t\int u^{3}v_{k}-t^{4}\int v_{k}^{4}
\]
\[
\ge-6t^{2}\int u^{2}v_{k}^{2}+4ta\int u^{3}-t^{4}+to(1).
\]
Taking into account that $\int u^{2}v_{k}^{2}$ is bounded as $k\to\infty$,
we have that for $t$ sufficiently small the right hand side is bounded
away from zero for all $k$ sufficiently large.
\end{rem}

\subsection{$\Delta$-convergence versus weak convergence}

As we have shown above, $\Delta$-limits and weak limits coincide
in Hilbert spaces in $\ell^{p}$ -spaces, $1<p<\infty$ (Corollary
\ref{cor:lp}). In general it can happen that the weak limit and the
$\Delta$-limit of a sequence both exist but are different.
\begin{example}
\label{rem:bad}%
An example of Opial \cite[Section 5]{Opial} allows an immediate interpretation
in terms of $\Delta$-limit and then says that in the space $L^{p}((0,2\pi))$,
$p\neq2$, $1<p<\infty$, there exist sequences whose $\Delta$-limit
and weak limit are different functions (M. Cwikel has brought the
authors' attention to the fact that the number $3/4$ which appears
twice in the definition of function $\phi$ on p. 596 of \cite{Opial}
is a misprint and is to be read in both places as $4/3$).\end{example}
\begin{rem}
Furthermore, if $\Psi_{n}$ is the primitive function of $\psi_{n}$
of Opial's counterexample, normalized in $W^{1,p}((0,2\pi))$, the
sequence $\{\Psi_{n}\}$ in $W^{1,p}((0,2\pi))$ also has a $\Delta$-limit
and a weak limit with different values (note that because of the normalization
coefficient the non-gradient portion of the Sobolev norm for this
sequence is vanishing). 
\end{rem}

\begin{rem}
It is not clear at this point when $\Delta$-convergence can be associated
with a topology, except when $\Delta$-convergence coincides with
weak convergence. See a preliminary discussion in \cite{st3}.
\end{rem}

\begin{rem}
\label{rem: snl1}In general, weakly lower semicontinuous functionals
are not lower semicontinuous with respect to $\Delta$-convergence.
From Example \ref{rem:bad} it follows that this is the case already
for continuous linear functionals acting on $L^{p}$, $p\neq2$. 
\end{rem}

\subsection{The Opial's condition in uniformly convex spaces. }

In this subsection we show that the Opial's condition (Condition (2)
in \cite{Opial}), which plays significant role in the fixed point
theory, has, for uniformly convex and uniformly Banach spaces, two
equivalent formulations. One is that weak and $\Delta$-convergence
coincide and the other is that the Frechét derivative of the norm
is weak-to-weak continuous away from zero. The latter is similar to
Lemma 3 in \cite{Opial} (which makes a weaker assertion under weaker
conditions). 
\begin{defn}
\label{def:StrongOpial}Let $X$ be a Banach space. One says that
a sequence $(x_{n})_{n\in\N}\subset X$, which is weakly convergent
to a point $x_{0}\in X$, satisfies the \emph{Opial's condition} if
\begin{equation}
\liminf\|x_{n}-x_{0}\|\le\liminf\|x_{n}-x\|\mbox{ for every }x\in X.\label{eq:OpCon}
\end{equation}
One says that a Banach space $X$ satisfies the Opial's condition
if any weakly convergent sequence $(x_{k})_{k\in\mathbb{N}}$ in $X$
satisfies the Opial's condition.\end{defn}
\begin{rem}
It is immediate from respective definitions that if a sequence in
a Banach space satisfies the Opial's condition and is both weakly
convergent and $\Delta$-convergent, then its $\Delta$-limit equals
its weak limit.\end{rem}
\begin{thm}
\label{thm:UCOpial} Let $X$ be a uniformly convex and uniformly
smooth Banach space. Then $X$ satisfies the Opial's condition if
and only if for any sequence $(x_{n})_{n\in\N}\subset X$,

\begin{equation}
x_{n}\rightharpoonup x\Longleftrightarrow x_{n}\rightharpoondown x,\label{eq:updown}
\end{equation}
or, equivalently, if for any bounded sequence which does not have
a strongly convergent subsequence, 
\begin{equation}
x_{n}\rightharpoonup x\text{ in }X\;\Longleftrightarrow(x_{n}-x)^{*}\rightharpoonup0\text{ in }X^{*}.\label{eq:dualityOpial}
\end{equation}
\end{thm}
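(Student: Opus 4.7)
The plan is to split the statement into two parts: first the main equivalence between Opial's condition and the identification of weak limits with $\Delta$-limits, and second the equivalence of \eqref{eq:updown} with the duality-map formulation \eqref{eq:dualityOpial}. The latter is almost immediate from Theorem \ref{thm:conj}, since the hypothesis that the bounded sequence has no strongly convergent subsequence is exactly what guarantees $\liminf\|x_n-x\|>0$, which is the standing assumption needed to apply that theorem. So the real work is showing that Opial is equivalent to $x_n\rightharpoonup x\Leftrightarrow x_n\rightharpoondown x$.

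For the direction Opial $\Rightarrow$ \eqref{eq:updown}, I would argue by a subsequence extraction. Assume first that $x_n\rightharpoonup x$. Then $(x_n)$ is bounded, so by Theorem \ref{thm:newBA} every subsequence of $(x_n)$ has a further $\Delta$-convergent sub-subsequence, say $x_{n_{k_j}}\rightharpoondown y$. This sub-subsequence still weakly converges to $x$, and on it Opial's inequality combined with the defining inequality of $\Delta$-convergence forces $\liminf\|x_{n_{k_j}}-x\|=\liminf\|x_{n_{k_j}}-y\|$; Proposition \ref{thm:3eqdef} then rules out $y\ne x$, so $y=x$. Thus every $\Delta$-convergent subsequence has limit $x$; a standard contradiction argument using the negation of the defining inequality \eqref{eq:oldmw-1-1} upgrades this to $x_n\rightharpoondown x$. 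The converse direction is symmetric: assuming $x_n\rightharpoondown x$, the sequence is bounded by Theorem \ref{thm:newBS}, hence (by reflexivity of uniformly convex spaces) every subsequence has a weakly convergent sub-subsequence $x_{n_{k_j}}\rightharpoonup y$, and the same Opial-plus-uniqueness argument gives $y=x$, so $x_n\rightharpoonup x$.

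For the direction \eqref{eq:updown} $\Rightarrow$ Opial, suppose $x_n\rightharpoonup x_0$. By hypothesis $x_n\rightharpoondown x_0$, so for every $x\in X$ the defining inequality yields $\|x_n-x_0\|\le\|x_n-x\|+o(1)$; passing to $\liminf$ gives exactly \eqref{eq:OpCon}. This direction is essentially a one-line translation.

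The main obstacle I foresee is the careful subsequence-of-subsequence bookkeeping in the forward direction, since $\Delta$-convergence is not known to come from a topology, so one must avoid any argument that tacitly assumes a topology and instead promote ``every convergent subsequence has limit $x$'' to full convergence by negating the definition directly. Once the core equivalence is established, the passage to \eqref{eq:dualityOpial} is immediate from Theorem \ref{thm:conj} applied to the shifted sequence $x_n-x$, which under the no-strongly-convergent-subsequence hypothesis satisfies $\liminf\|x_n-x\|>0$.
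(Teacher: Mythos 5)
Your proposal is correct and follows essentially the same route as the paper: reduce to sequences possessing both a weak and a $\Delta$-limit via Banach--Alaoglu, Theorem \ref{thm:newBA} and Theorem \ref{thm:newBS}, identify the two limits using Opial's condition together with Proposition \ref{thm:3eqdef}, and obtain \eqref{eq:dualityOpial} from Theorem \ref{thm:conj}. Your write-up is in fact more explicit than the paper's about the subsequence bookkeeping and about why the identification of the limits requires the strict inequality of Proposition \ref{thm:3eqdef}, but the underlying argument is the same.
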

\begin{proof}
The Opial's condition follows immediately from (\ref{eq:updown})
and the definition of $\Delta$-convergence. Asume now that Opial's
condition holds. By the Banach-Alaoglu Theorem and Theorem \ref{thm:newBA}
(once we take into account Theorem \ref{thm:newBS}), it suffices
to consider sequences that have both a weak and a $\Delta$-limit.
Then by (\ref{eq:OpCon}) the weak limit of such sequence satisfes
the deinition of $\Delta$-limit. The last assertion of the theorem
follows from Theorem \ref{thm:conj}.
\end{proof}

\begin{rem}
It should also be noted that $\Delta$-convergence, unlike weak convergence,
depends on the choice of an equivalent norm. Theorem 1 of van Dulst
\cite{VD}, proves that in a separable Banach space one can always
find an equivalent norm (that one may call a \emph{van Dulst norm})
such that every weakly convergent sequence in the space satisfies
Opial's condition (\ref{eq:OpCon}), i.e. that $\Delta$-convergence
associated with a van Dulst norm is associated with the weak topology.
In practice, however, renorming the space may change conditions of
a problem where the Opial's condition is needed. In particular, since
van Dulst's construction uses a basis in a Banach space $Y$ which
contains $X$ isometrically,%
{} it is not clear if one can preserve the invariance of the equivalent
norm with respect to a given group of operators without existence
of a wavelet basis associated with this group. Theorem \ref{thm:main-1}
requires that the new norm will remain uniformly convex and invariant
with respect to a fixed group of isometries, which is not assured
by the van Dulst's construction. For the purpose of applications to
functional spaces, uniformly convex norms, satisfying strong Opial's
condition and invariant with respect to Euclidean shifts and dyadic
dilations, are known (Cwikel \cite{Cwikel-PD}) for Besov and Triebel-Lizorkin
spaces $\dot{B}^{s,p,q}$ and $\dot{F}^{s,p,q}$ with $p,q\in(1,\infty)$,
$s\in\R$ (which includes Sobolev spaces $\dot{H}^{s,p}$ for all
$s\in\R$, $p\in(1,\infty)$)) for all Besov and Triebel-Lizorkin
spaces $\dot{B}^{s,p,q}$ and $\dot{F}^{s,p,q}$ with $p,q\in(1,\infty)$,
$s\in\R$ (which includes Sobolev spaces $\dot{H}^{s,p}(\R^{N})$
for all $s\in\R$ and $p\in(1,\infty)$). Motivation for the choice
of norm, based on the Littlewood-Paley decomposition, can be found
the proof of cocompactness of Sobolev imbeddings in Killip \&Visan,
\cite{KiVi}, Chapter 4 (note that the authors call the property of
cocompactness\emph{ inverse imbedding}). The argument of Cwikel is
based on verifying \ref{eq:dualityOpial} using the definition of
the equivalent norm for Besov and Triebel-Lizorkin spaces from \cite{Triebel}
(Definition 2, p.~238), based on the Littlewood-Paley decomposition,
and it reduces both weak and polar convergence, by straightforward
calculations, to obvious pointwise convergence of the sequence $(2^{ns}F^{-1}\varphi_{0}(2^{-n}\cdot)F(u_{k}-u))_{k\in\N}$,
where $F$ denotes the Fourier transform, $\varphi_{0}$ is a smooth
function supported in an annulus, $n\in\Z$ and $s\in\R$. 
\end{rem}

\section{A discussion concerning the Brezis-Lieb lemma }

It is interesting to note that while weak convergence of $(x_{k})_{k\in\mathbb{N}}$
to an element $x$ in a Banach space implies that $\|x_{k}\|\ge\|x\|+o(1)$
(weak lower semicontinuity of the norm), $\Delta$-convergence of
such a sequence to $x$ implies that $\|x_{k}\|\ge\|x_{k}-x\|+o(1)$,
while in the case of sequences in a Hilbert space, both of these inequalities
can also be deduced from the stronger condition 
\begin{equation}
\|x_{k}\|^{2}=\|x_{k}-x\|^{2}+\|x\|^{2}+o(1)\label{eq:HilbBl}
\end{equation}
When the space $X$ is uniformly convex, Lemma \ref{lem:energy} gives
a lower bound for the norm of the $\Delta$-convergent sequence in
the form $\|u_{k}\|\ge\|u_{k}-u\|+\delta(\|u\|)+o(1)$. Another relation
that allows to estimate the norm of the sequence $(u_{k})$ by the
norms of its weak limit $u$ and of the remainder sequence $u_{k}-u$
when $X=L^{p}$, $1\le p<\infty$, is the important Brezis-Lieb lemma
\cite{Brezis-Lieb}. Remarkably, in the case $p=2$ Brezis-Lieb lemma
follows from (\ref{eq:HilbBl}), while for $p\neq2$ it requires,
in addition to the assumption of weak convergence also convergence
almost everywhere. One may, however, interpret convergence a.e. as
a sufficient condition for $\Delta$-convergence of the sequence to
its weak limit, as one can see from the Brezis-Lieb lemma itself,
or, alternatively, from the following argument.
\begin{lem}
Let $(\Omega,\mu)$ be a measure space and let $u_{k}$ be a bounded
sequence in $L^{p}(\Omega,\mu$), $p\in(1,\infty)$. If $u_{k}\rightharpoonup u$
and $u_{k}\to u$ a.e. then $u_{k}\rightharpoondown u$. \end{lem}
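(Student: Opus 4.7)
The plan is to invoke the classical Brezis--Lieb lemma applied to shifted sequences $u_k - v$ for arbitrary $v \in L^p(\Omega,\mu)$, and then to extract the desired linear norm inequality from the resulting comparison of $p$-th powers.

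First I would fix an arbitrary $v \in L^p(\Omega,\mu)$. Since $u_k \to u$ almost everywhere, the sequence $u_k - v$ converges to $u - v$ almost everywhere, and it is bounded in $L^p$ because $(u_k)$ is. Applying the Brezis--Lieb lemma \cite{Brezis-Lieb} to $u_k - v$ with a.e.\ limit $u - v$ gives
\[
\|u_k - v\|_p^p \;-\; \|(u_k - v) - (u - v)\|_p^p \;\longrightarrow\; \|u - v\|_p^p,
\]
which, after the obvious cancellation, simplifies to
\[
\|u_k - v\|_p^p \;-\; \|u_k - u\|_p^p \;\longrightarrow\; \|u - v\|_p^p \;\ge\; 0.
\]

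Next I would convert this estimate between $p$-th powers into a linear estimate using the elementary inequality $|a - b|^p \le |a^p - b^p|$, valid for $a, b \ge 0$ and $p \ge 1$ (a consequence of $(b+c)^p \ge b^p + c^p$ for $b,c \ge 0$). Writing $a_k = \|u_k - u\|_p$ and $b_k = \|u_k - v\|_p$, I would split cases: if $a_k \le b_k$ the required inequality is immediate; otherwise $a_k - b_k \le (a_k^p - b_k^p)^{1/p}$, and since $a_k^p - b_k^p \to -\|u - v\|_p^p \le 0$, the positive part of this quantity tends to $0$, so $\limsup_k (a_k - b_k) \le 0$. In either case, $\|u_k - u\|_p \le \|u_k - v\|_p + o(1)$.

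This is exactly the defining relation (\ref{eq:oldmw-1-1}) of $\Delta$-convergence, and since $v \in L^p$ was arbitrary we conclude $u_k \rightharpoondown u$. The only non-cosmetic step is the invocation of Brezis--Lieb itself; the rest is a routine $p$-th root manipulation, so I do not anticipate a genuine obstacle. Morally, the argument explains why a.e.\ convergence upgrades weak convergence to $\Delta$-convergence in $L^p$: the non-negative sign of the term $\|u - v\|_p^p$ in the Brezis--Lieb expansion is precisely what identifies $u$ as the asymptotically closest point to the tail of $(u_k)$.
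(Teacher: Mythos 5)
Your argument is correct, but it takes a genuinely different route from the one in the paper. You treat the classical Brezis--Lieb lemma as a black box, apply it to the shifted sequences $u_k-v$ for each fixed $v$, and then extract the linear inequality $\|u_k-u\|_p\le\|u_k-v\|_p+o(1)$ from the resulting identity between $p$-th powers via the superadditivity estimate $(a-b)^p\le a^p-b^p$ for $a\ge b\ge0$; this verifies the defining relation (\ref{eq:oldmw-1-1}) directly. The paper instead goes through its duality-map characterization of $\Delta$-convergence (Theorem \ref{thm:conj}): it passes to a weakly convergent subsequence of the conjugates $(u_k-u)^{*}$ in $L^{p'}$ and uses Egorov's theorem to show that any such weak limit vanishes a.e., whence $(u_k-u)^{*}\rightharpoonup0$ and $u_k\rightharpoondown u$. (The paper explicitly flags your route -- ``as one can see from the Brezis--Lieb lemma itself'' -- before presenting the Egorov argument as an alternative.) Your approach works straight from Definition \ref{def: polar_limit} and needs neither the weak convergence hypothesis nor the uniform convexity/smoothness underlying Theorem \ref{thm:conj}, at the price of importing the full strength of Brezis--Lieb; the paper's argument is lighter on measure-theoretic input (only Egorov, no Brezis--Lieb) and stays within the duality framework the paper has already built, which is the natural language for the rest of Section 4. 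One small point worth making explicit in your write-up: $u\in L^p$ (needed to apply Brezis--Lieb to $u_k-v$ with limit $u-v$) follows either from the assumed weak convergence or from Fatou's lemma and the boundedness of $(u_k)$.
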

\begin{proof}
Without loss of generality we may assume that $u=0$. Let $u_{k}^{*}\rightharpoondown w$
on a renamed subsequence. Then $w=0$ on every set where a.e. convergence
becomes uniform, and therefore, by Egoroff theorem, $w=0$ outside
of a set of arbitrarily small measure, and thus a.e. Thus $u_{k}^{*}$
has no subsequence with a non-zero $\Delta$-limit, i.e. $u_{k}\rightharpoondown0$.
\end{proof}
It is natural to pose the question, what may remain of the assertion
of the Brezis-Lieb lemma if we replace its conditions with a weaker
requirement that both $\Delta$-limit and weak limit exist and are
equal. We have
\begin{thm}
\label{thm:newbl}Let $(\Omega,\mu)$ be a measure space. Assume that
$u_{k}\rightharpoonup u$ and $u_{k}\rightharpoondown u$ in $L^{p}(\Omega,\mu)$.
If $p\ge3$ then

\begin{equation}
\int_{\Omega}|u_{k}|^{p}d\mu\ge\int_{\Omega}|u|^{p}d\mu+\int_{\Omega}|u_{k}-u|^{p}d\mu+o(1).\label{eq:BL}
\end{equation}
\end{thm}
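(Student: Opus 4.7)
The plan is to reduce (\ref{eq:BL}) to a sharp pointwise inequality and then dispatch the two resulting cross terms using the two modes of convergence. Set $v_{k}:=u_{k}-u$. By hypothesis $v_{k}\rightharpoonup0$ in $L^{p}(\Omega,\mu)$, and by Lemma~\ref{lem:additive} also $v_{k}\rightharpoondown0$. The target inequality then becomes
\[
\int_{\Omega}|u+v_{k}|^{p}\,d\mu\ \ge\ \int_{\Omega}|u|^{p}\,d\mu+\int_{\Omega}|v_{k}|^{p}\,d\mu+o(1).
\]

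The first step I would carry out is to establish the pointwise inequality
\[
|a+b|^{p}-|a|^{p}-|b|^{p}\ \ge\ p\,|a|^{p-2}ab\ +\ p\,|b|^{p-2}ba,\qquad a,b\in\R,\ p\ge3,
\]
with $\operatorname{Re}(\bar{a}b)$ in place of $ab$ in the complex-valued case. After dividing by $|a|^{p}$ (the case $a=0$ being trivial) this reduces to $F(s)\ge0$ on $\R$, where $F(s):=|1+s|^{p}-1-|s|^{p}-ps-p|s|^{p-2}s$ satisfies $F(0)=F'(0)=0$. I would then verify $F''(s)\ge0$ on each of $(-\infty,-1)$, $(-1,0)$, $(0,\infty)$ (with a one-sided limiting argument at the non-smooth points) by reducing each case to the supporting-hyperplane inequality for the function $t\mapsto t^{p-2}$ on $[0,\infty)$. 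That function is convex precisely when $p-2\ge1$, which is what pinpoints the threshold $p\ge3$; the sharpness is consistent with the counterexample of \cite{AditiBL}.

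Integrating the pointwise inequality with $a=u(x)$ and $b=v_{k}(x)$, the claim reduces to showing that the two cross integrals
\[
I_{k}:=\int_{\Omega}|u|^{p-2}u\,v_{k}\,d\mu,\qquad J_{k}:=\int_{\Omega}|v_{k}|^{p-2}v_{k}\,u\,d\mu
\]
both tend to $0$. Since $|u|^{p-2}u\in L^{p'}(\Omega,\mu)$ with $p'=p/(p-1)$, and $v_{k}\rightharpoonup0$ in $L^{p}$, the weak convergence hypothesis yields $I_{k}\to0$. For $J_{k}$ I would pass to a subsequence along which $\|v_{k}\|_{p}$ has a limit $L\ge0$: if $L=0$ then $v_{k}\to0$ in $L^{p}$ and $J_{k}\to0$ by H\"older; if $L>0$ then $\liminf\|v_{k}\|_{p}>0$ and Theorem~\ref{thm:conj}, applied to the $\Delta$-null sequence $(v_{k})$, gives $v_{k}^{*}\rightharpoonup0$ in $L^{p'}$, where $v_{k}^{*}=|v_{k}|^{p-2}v_{k}/\|v_{k}\|_{p}^{p-1}$ is the duality image. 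Since $u\in L^{p}$, it follows that $J_{k}=\|v_{k}\|_{p}^{p-1}\langle v_{k}^{*},u\rangle\to0$. A standard subsequence-of-a-subsequence argument then delivers the full-sequence statement.

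The main obstacle lies in the first step: producing the pointwise inequality with the \emph{sharp} coefficient $p$ in both cross terms. This sharpness is essential, because in the subsequent integration the resulting linear functionals in $v_{k}$ and in $|v_{k}|^{p-2}v_{k}$ are exactly the ones annihilated respectively by weak convergence and by $\Delta$-convergence via the duality map, leaving no slack; any weaker version of the algebraic inequality would contribute a positive defect that neither mode of convergence could absorb.
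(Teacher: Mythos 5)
Your proposal is correct and follows essentially the same route as the paper: the paper also reduces (\ref{eq:BL}) to the elementary inequality $(1+t)^{p}\ge1+|t|^{p}+p|t|^{p-2}t+pt$ (equivalent to your pointwise inequality after homogenization), proves it by a second-derivative/convexity argument that pinpoints $p\ge3$, and disposes of the two cross integrals by weak convergence and by $\Delta$-convergence via the duality map, exactly as you do. Your write-up merely makes explicit (via Theorem~\ref{thm:conj} and the subsequence argument for $\liminf\|v_{k}\|_{p}$) what the paper compresses into the phrase that the last two terms vanish ``by assumption.''
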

\begin{proof}
In order to prove the assertion it suffices to verify the elementary
inequality 

\begin{equation}
(1+t)^{p}\ge1+|t|^{p}+p|t|^{p-2}t+pt,\label{eq:elementary}
\end{equation}
since it implies $|u_{k}|^{p}\ge|u_{k}-u|^{p}+|u|^{p}+p|u|^{p-2}u(u_{k}-u)+p|u_{k}-u|^{p-2}(u_{k}-u)u$,
with the integrals of the last two terms vanishing by assumption.
The elementary inequality is equivalent to the inequalities
\[
f_{+}(t)=(1+t)^{p}-1-t^{p}-pt^{p-1}-pt\ge0,\quad t\geq0
\]
and, assuming without any restriction (in view of the symmetry of
the formula) that $|t|\leq1$ 
\[
f_{-}(t)=(1-t)^{p}-1-t^{p}+pt^{p-1}+pt\ge0,\quad t\in[0,1].
\]
To prove them, note that both functions vanish at zero, so it suffices
to show that their derivatives are nonnegative. We have
\[
\frac{1}{p}f'_{+}(t)=(1+t)^{p-1}-t^{p-1}-1-(p-1)t^{p-2},
\]
which is also a function vanishing at zero, so it suffices to show
that its derivative is nonnegative, i.e. 
\[
\frac{1}{p(p-1)}f''_{+}(t)=(1+t)^{p-2}-t^{p-2}-(p-2)t^{p-3}\ge0.
\]
Let $s=t^{-1}$ and $q=p-2$. Then 
\[
\frac{s^{q}}{p(p-1)}f''_{+}(s^{-1})=(1+s)^{q}-1-qs\ge0,\quad s\ge1,
\]
which is true by convexity of the first term, since $q\ge1$ (i.e.
$p\ge3$). 

Consider now the derivative of $f_{-}$:
\[
\frac{1}{p}f'_{-}(t)=-(1-t)^{p-1}-t^{p-1}+1+(p-1)t^{p-2}.
\]
It remains to notice that $(1-t)^{p-1}+t^{p-1}\le1$.\end{proof}
\begin{rem}
Easy calculations show that inequality (\ref{eq:elementary}) used
in the proof of Theorem \ref{thm:newbl} does not hold unless $p\ge3$,
and the argument of homogenization type is used in \cite{AditiBL}
to show that condition $p\ge3$ is indeed necessary for (\ref{eq:BL}),
unless $p=2$. For $p=2$, as we already mentioned, inequality (\ref{eq:BL})
holds, and, moreover, becomes an equality, which can be easily verified.
\end{rem}

\begin{rem}
The inequality in (\ref{eq:BL}) can be strict. Indeed, one can easily
calculate by binomial expansion for $p=4$ that if $u_{k}\rightharpoonup u$
and $u_{k}\rightharpoondown u$ ( i.e. $(u_{k}-u)^{3}\rightharpoonup0$
in $L^{4/3})$, then
\end{rem}
\[
\int_{\Omega}|u_{k}|^{4}d\mu=\int_{\Omega}|u|^{4}d\mu+\int_{\Omega}|u_{k}-u|^{4}d\mu+6\int u^{2}(u_{k}-u)^{2}d\mu+o(1).
\]
Let $\Omega=(0,3)$ equipped with Lebesgue measure and consider three
sequences of disjoint sets $A_{1;k},\dots,A_{3;k}$ , $k\in\N$, such
that $(\frac{m-1}{k},\frac{m}{k})\subset A_{\mathrm{rem}(m,3);k}$
where $\mathrm{rem}(m,3)$ is the remainder of division of $m$ by
3 and $m=1,\dots,3k$. Set $u_{k}=\sum_{i=1}^{3}a_{i}\chi_{A_{i;k}}$
where $a_{1}=1,$ $a_{2}=2$ and $a_{3}=0$. Then $u_{k}\rightharpoonup1$
and $(u_{k}-u)^{3}\rightharpoonup\frac{1}{3}\sum_{i}(a_{i}-1)^{3}=0$,
while $\int u^{2}(u_{k}-u)^{2}d\mu\to2>0$.

\begin{rem}
$\Delta$-convergence is\emph{ necessary }for the assertion of Brezis-Lieb
lemma, and even a weaker statement (\ref{eq:BL}), to hold. More accurately,
if a sequence $(u_{k})\subset L^{p}(\Omega,\mu)$, $p\in[1,\infty)$,
and a function $u\in L^{p}(\Omega,\mu)$ are such that for any $v\in L^{p}(\Omega,\mu)$,
\begin{equation}
\int_{\Omega}|u_{k}-v|^{p}d\mu\ge\int_{\Omega}|u-v|^{p}d\mu+\int_{\Omega}|u_{k}-u|^{p}d\mu+o(1),\label{eq:BL-1}
\end{equation}
 then $u_{k}\rightharpoondown u$ by the definition of $\Delta$-limit.
\end{rem}

\section{profile decomposition in terms of $\Delta$-convergence}

Throughout this section we assume that $X$ is a uniformly convex
and uniformly smooth Banach space. We also assume that $D$ is a subset,
containing the identity operator, of a group $D_{0}$ of isometries
on $X$. In this section we prove that every bounded sequence in $X$
has a subsequence with a profile decomposition based on $\Delta$-convergene.The
reason that motivates us to define concentration profiles as $\Delta$-limits,
rather than weak limits, is that $\Delta$-convergence yields estimates
of the energy type (\ref{eq:energy}) which are not readily available
when usual weak convergence is used. 

We need to modify some of the definitions of previous sections, which
are based on weak convergence, by changing the mode of convergence
involved to $\Delta$-convergence.
\begin{defn}
One says that a sequence $(u_{k})\subset X$ has a $D$-$\Delta$-limit
$u$ (to be denoted $u_{k}\stackrel{D}{\rightharpoondown}u)$, if
for every sequence $(g_{k})\subset D$, $g_{k}^{-1}(u_{k}-u)\rightharpoondown0$.
\end{defn}
Equivalently, if we take into account the supremum of the norms of
the $\Delta$-profiles of a given sequence by setting

\[
p((u_{k})_{k\in\N})=\sup\{\|w\|:\;\exists\text{ subsequences }(u_{n_{k}})\subset(u_{k})\text{ and }(g_{k})\subset D,
\]
\[
\text{ \text{ such that }}g_{k}^{-1}(u_{n_{k}})\rightharpoondown w\},
\]
we can say that $u_{k}\stackrel{D}{\rightharpoondown}u$ if and only
if $p((u_{k}-u)_{k\in\N})=0$.

\begin{defn}
\label{def:PPD}One says that a bounded sequence $(u_{k})$ in a Banach
space $X$\textbf{ }\textit{admits a }$\Delta$-\textit{profile decomposition
relative to the set of isometries} $D\subset D_{0}$, if there exist
sequences $(g_{k}^{(n)})_{k}\subset D$ with $g_{k}^{(1)}=\mathrm{Id}$,
elements $w^{(n)}\in X$, $n\in\mathbb{N}$, and a sequence $r_{k}\stackrel{D}{\rightharpoondown}0$
such that 
\begin{equation}
(g_{k}^{(n)})^{-1}g_{k}^{(m)}\rightharpoonup0\text{ whenever }m\neq n\text{ (asymptotic decoupling of gauges)},\label{eq:decouple-1}
\end{equation}
and a renamed subsequence of $u_{k}$ can be represented in the form
\begin{equation}
u_{k}=\sum_{j=1}^{\infty}g_{k}^{(j)}w^{(j)}+r_{k},\label{eq:ProDec-1}
\end{equation}
where the series $\sum_{j=1}^{\infty}g_{k}^{(j)}w^{(j)}$ is convergent
in $X$ absolutely and uniformly with respect to $k$. In this case
we also have 
\[
(g_{k}^{(n)})^{-1}u_{k}\rightharpoondown w^{(n)},\; n\in\mathbb{N}.
\]

\end{defn}

\begin{defn}
We shall say that the group $D_{0}$ of isometries on a Banach spac
$X$ is a \emph{dislocation group} (to be denoted $D_{0}\in\mathcal{I}_{X}$)
if it satisfies

\begin{equation}
(g_{k})\subset D_{0},\; g_{k}\not\rightharpoonup0\Longrightarrow\exists(k_{j})\subset\mathbb{N}:\;(g_{k_{j}}^{-1})\text{ and (\ensuremath{g_{k_{j}}}) converge operator-strongly (i.e. pointwise),}\label{eq:*}
\end{equation}

and 
\begin{equation}
u_{k}\rightharpoondown0,\; w\in X,\;(g_{k})\subset D_{0},\, g_{k}\rightharpoonup0\;\Longrightarrow\; u_{k}+g_{k}w\rightharpoondown0.\label{eq:**}
\end{equation}
\end{defn}
\begin{rem}
Note also that condition (\ref{eq:**} ) is trivially satisfied if
Opial's condition holds, and in particular, in a Hilbert space, so
this definition agrees with the definition of the dislocation group
used previously in \cite{Fieseler-Tintarev}. It is easy to prove
that when $D_{0}$ is a dislocation group, the profiles $w^{(n)}$
in Definition \ref{def:PPD} are unique, up to the choice of subsequence
and up to multiplication by an operator $g\in D_{0}$. The argument
is repetitive of that in Proposition 3.4 in \cite{Fieseler-Tintarev},
which considers the case of Hilbert space.\end{rem}
\begin{thm}
\label{thm:main}Let $X$ be a uniformly convex and uniformly smooth
Banach space and let $D\ni\mathrm{Id}$ be subset of a dislocation
group $D_{0}$. Then every bounded sequence $(x_{k})\subset X$ admits
a $\Delta$-profile decomposition relative to $D$. Moreover, if $\|x_{k}\|\le1$,
and $\delta$ is the modulus of convexity of $X$, then $\|w^{(n)}\|\le2$
for all $n\in\N$ and
\begin{equation}
\limsup\|r_{k}\|+\sum_{n}\delta(\|w^{(n)}\|)\le1.\label{eq:Energy}
\end{equation}

\end{thm}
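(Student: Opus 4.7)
My plan is to construct the profile decomposition by iteratively extracting $\Delta$-profiles, controlled by an energy inequality coming from the modulus of convexity. I use the quantity $p((v_k))$ introduced above as the supremum of norms of possible $\Delta$-limits along subsequences and gauges in $D$. Starting from $v_k^{(0)} = x_k$ and $g_k^{(1)} = \mathrm{Id}$, I let $w^{(1)}$ be the $\Delta$-limit of a subsequence of $(x_k)$ (which exists by Theorem~\ref{thm:newBA}). Inductively, given $v_k^{(n)} := x_k - \sum_{j=1}^n g_k^{(j)} w^{(j)}$, if $p((v_k^{(n)})) > 0$ I choose, along a further subsequence, $(g_k^{(n+1)}) \subset D$ and $w^{(n+1)}$ such that $(g_k^{(n+1)})^{-1} v_k^{(n)} \rightharpoondown w^{(n+1)}$ with $\|w^{(n+1)}\| \ge \frac{1}{2} p((v_k^{(n)}))$. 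A Cantor diagonal argument then produces a single subsequence of $(x_k)$ along which every profile is extracted simultaneously.

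The energy inequality (\ref{eq:Energy}) will be an immediate consequence of Lemma~\ref{lem:energy}. Since $g_k^{(n+1)}$ is an isometry, $\|(g_k^{(n+1)})^{-1} v_k^{(n)}\| = \|v_k^{(n)}\|$ and $\|(g_k^{(n+1)})^{-1} v_k^{(n)} - w^{(n+1)}\| = \|v_k^{(n+1)}\|$, so the lemma yields $\|v_k^{(n)}\| \ge \|v_k^{(n+1)}\| + \delta(\|w^{(n+1)}\|) + o(1)$ together with $\|w^{(n+1)}\| \le 2$. Telescoping from $\|v_k^{(0)}\| \le 1$ gives $\limsup_k \|v_k^{(n)}\| + \sum_{j=1}^n \delta(\|w^{(j)}\|) \le 1$, which becomes (\ref{eq:Energy}) on passing to the limit in $n$. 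In particular $\|w^{(n)}\| \to 0$.

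To prove the asymptotic decoupling $(g_k^{(n)})^{-1} g_k^{(m)} \rightharpoonup 0$ for $m \neq n$, I will argue by induction on $n$, using both defining properties (\ref{eq:*}) and (\ref{eq:**}) of a dislocation group. Suppose the decoupling already holds among $g_k^{(1)}, \ldots, g_k^{(n-1)}$; if it fails for some $m < n$, then (\ref{eq:*}) yields a subsequence along which $(g_k^{(n)})^{-1} g_k^{(m)}$ and its inverse converge pointwise to isometries. Combined with the inductive hypothesis and (\ref{eq:**}), this forces $(g_k^{(m)})^{-1} v_k^{(n-1)} \rightharpoondown 0$, which transferred across the strong operator limit yields $(g_k^{(n)})^{-1} v_k^{(n-1)} \rightharpoondown 0$ as well, contradicting $\|w^{(n)}\| > 0$. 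Setting $r_k := x_k - \sum_{n=1}^\infty g_k^{(n)} w^{(n)}$, uniform convergence of the series in $k$ will follow by combining the energy estimate (which makes $\limsup_k \|v_k^{(n)}\|$ a Cauchy real sequence as $n$ grows) with the decoupling; and $r_k \stackrel{D}{\rightharpoondown} 0$ will follow from the selection rule, since any surviving $\Delta$-profile along some gauge sequence would imply $p((v_k^{(n)}))$ stays bounded away from zero, contradicting $\|w^{(n+1)}\| \ge \frac{1}{2} p((v_k^{(n)})) \to 0$. The hardest step is the decoupling argument, whose Banach-space version requires careful tracking of how $\Delta$-limits interact with strongly convergent sequences of isometries --- a subtlety absent from the Hilbert space case of \cite{SchindlerTintarev}, where asymptotic orthogonality of gauges handles decoupling directly.
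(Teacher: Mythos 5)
Your extraction scheme (half-maximal $\Delta$-profiles, telescoping Lemma~\ref{lem:energy}, decoupling via (\ref{eq:*}) and (\ref{eq:**})) reproduces the Hilbert-space template of \cite{SchindlerTintarev}, and those parts are sound: the telescoping does give $\limsup_k\|v_k^{(N)}\|+\sum_{n\le N}\delta(\|w^{(n)}\|)\le1$ and $\|w^{(n)}\|\le 2$, and your induction for $(g_k^{(n)})^{-1}g_k^{(m)}\rightharpoonup0$ is essentially Lemmas~\ref{lem:1-2} and \ref{lem:induction}. The genuine gap is the step you dispatch in one clause: convergence of the series $\sum_n g_k^{(n)}w^{(n)}$ in norm, unconditionally and uniformly in $k$. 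The energy bound only controls $\sum_n\delta(\|w^{(n)}\|)$, and since $\delta(\epsilon)=O(\epsilon^2)$ in every Banach space, this does \emph{not} imply $\sum_n\|w^{(n)}\|<\infty$; absolute convergence is unavailable. In a Hilbert space one rescues convergence because asymptotic orthogonality of the gauges converts the weak decoupling into the norm identity $\|\sum_{n=N}^{M}g_k^{(n)}w^{(n)}\|^2\approx\sum_{n=N}^M\|w^{(n)}\|^2$ for large $k$; in a general uniformly convex space the weak decoupling $(g_k^{(n)})^{-1}g_k^{(m)}\rightharpoonup0$ yields no norm estimate on sums of bubbles, so the fact that $\limsup_k\|v_k^{(n)}\|$ is Cauchy in $n$ tells you nothing about $\|v_k^{(N)}-v_k^{(M)}\|=\|\sum_{n=N+1}^{M}g_k^{(n)}w^{(n)}\|$. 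This also undermines your passage from the bound on $\limsup_k\|v_k^{(N)}\|$ to the bound on $\limsup_k\|r_k\|$ in (\ref{eq:Energy}), and your transfer of $p((v_k^{(n)}))\to0$ to $p((r_k))=0$, both of which need $\sup_k\|r_k-v_k^{(N)}\|\to0$.

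This is exactly the obstruction the paper's proof is built around, and it is resolved not by a sharper estimate but by a different selection rule. The paper introduces the order $(x_k)\succ(y_k)$ (subtract one bubble), the multi-step order $>$, and the functional $\sigma((x_k))=\inf_{(y_k)\ge(x_k)}\sup_k\|y_k\|$; at stage $j$ it subtracts a whole finite \emph{cluster} of bubbles at once, chosen so that $\sup_k\|x_k^{(j+1)}\|<\sigma((x_k^{(j)}))+\tfrac12\delta(2^{-j})$. Lemma~\ref{lem:tail} then bounds $\delta\bigl(\|\sum_{n\in J}g_{n_k}^{(n)}w^{(n)}\|\bigr)$ for \emph{every} sub-collection $J$ of a cluster by $\sup\|x_k\|-\sigma((x_k))+\eta$ --- a uniform convexity argument applied to the midpoint $\beta_k=\tfrac12(\alpha_k+y_k)$, which exploits that $\|\beta_k\|$ cannot drop below $\sigma$ --- so each cluster's partial sums have norm at most $2^{-j}$. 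Summing over $j$ gives the unconditional, $k$-uniform convergence of the tail, and the same quantity $\sup\|x_k^{(j)}\|-\sigma((x_k^{(j)}))$ dominates $\delta(p((x_k^{(j)})))$, which yields $r_k\stackrel{D}{\rightharpoondown}0$ after diagonalization. Without some device of this kind that controls the norm of \emph{sums} of bubbles rather than individual profiles, your construction does not produce a convergent series, so the proof as proposed is incomplete at its central point.
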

We prove the theorem via a sequence of lemmas. 
\begin{lem}
\label{lem:w-1}Let $(g_{k})\subset D_{0}$. If $g_{k}\rightharpoonup0$
then $g_{k}^{-1}\rightharpoonup0$.\end{lem}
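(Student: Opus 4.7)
The plan is to argue by contradiction, taking advantage of the fact that $D_0$ is a \emph{group}, so that both $(g_k)$ and $(g_k^{-1})$ lie in $D_0$ and the hypothesis (\ref{eq:*}) can be applied to either one. Suppose $g_k \rightharpoonup 0$ but $g_k^{-1} \not\rightharpoonup 0$. I would then apply (\ref{eq:*}) to the sequence $(g_k^{-1}) \subset D_0$, obtaining a subsequence $(k_j)$ along which \emph{both} $g_{k_j}^{-1}$ and $g_{k_j}$ converge pointwise to some operators; call the pointwise limit of $g_{k_j}$ the operator $g$.

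The contradiction will come from two incompatible properties of $g$. First, because each $g_{k_j}$ is an isometry, passing to the pointwise (hence norm) limit in $\|g_{k_j}x\| = \|x\|$ yields $\|gx\| = \|x\|$ for every $x \in X$; in particular $g \neq 0$ as soon as $X \neq \{0\}$. Second, pointwise convergence implies weak convergence on each vector, so $g_{k_j}x \rightharpoonup gx$; combined with the standing hypothesis $g_{k_j} x \rightharpoonup 0$, uniqueness of weak limits forces $gx = 0$ for every $x$, i.e.\ $g = 0$. The two conclusions are incompatible, yielding the desired contradiction.

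I do not anticipate any serious obstacle: the only point that needs a moment of care is that the hypothesis (\ref{eq:*}) is invoked on the sequence $(g_k^{-1})$ rather than $(g_k)$, which requires that $D_0$ be closed under inversion — this is exactly the reason the statement is formulated for a dislocation group rather than merely a set. The remainder of the argument is the standard observation that the pointwise limit of isometries is norm-preserving and that pointwise convergence refines weak convergence.
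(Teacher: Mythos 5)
Your argument is correct and coincides with the paper's own proof: the authors likewise apply condition (\ref{eq:*}) to the sequence $(g_k^{-1})$, extract a subsequence along which $(g_{k_j})$ converges strongly, and note that the strong limit is an isometry (hence nonzero) yet must be $0$ because $g_k\rightharpoonup 0$. Your version merely spells out the two incompatible properties of the limit operator in more detail; there is nothing to add or correct.
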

\begin{proof}
Assume that $g_{k}^{-1}\not\rightharpoonup0$. Then by (\ref{eq:*})
the sequence $(g_{k})$ has a strongly convergent subsequence, whose
limit is an isometry, and thus it cannot be zero.
\end{proof}

\begin{lem}
\label{lem:weakstrong}Let $(g_{k})\subset D$ be such that $g_{k}^{-1}$
is operator-strongly convergent. If $x_{k}\rightharpoondown0$, then
$g_{k}x_{k}\rightharpoondown0$.\end{lem}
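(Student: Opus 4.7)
The plan is to unfold the definition of $\Delta$-convergence on $(g_k x_k)$ and translate it, via the isometry property of $g_k$, into a statement about $x_k$ tested against a suitably chosen fixed vector.

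More precisely, let $h$ denote the pointwise (strong operator) limit of $(g_k^{-1})$. Fix an arbitrary $z\in X$. I would like to show
\[
\|g_k x_k\|\le \|g_k x_k-z\|+o(1).
\]
First, because each $g_k$ is an isometry, $\|g_k x_k\|=\|x_k\|$ and $\|g_k x_k-z\|=\|g_k(x_k-g_k^{-1}z)\|=\|x_k-g_k^{-1}z\|$, so the inequality to be proved becomes $\|x_k\|\le \|x_k-g_k^{-1}z\|+o(1)$.

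The only delicate point is that in the definition of $\Delta$-convergence the test element must be \emph{fixed}, whereas $g_k^{-1}z$ depends on $k$. This is resolved by inserting the fixed vector $hz$: by strong operator convergence, $\|g_k^{-1}z-hz\|\to 0$, so the triangle inequality gives
\[
\|x_k-hz\|\le \|x_k-g_k^{-1}z\|+\|g_k^{-1}z-hz\|=\|x_k-g_k^{-1}z\|+o(1),
\]
and conversely $\|x_k-g_k^{-1}z\|\le \|x_k-hz\|+o(1)$. Now I apply the hypothesis $x_k\rightharpoondown 0$ with the \emph{fixed} test vector $y=hz$, which yields $\|x_k\|\le \|x_k-hz\|+o(1)$. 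Chaining these estimates produces $\|x_k\|\le \|x_k-g_k^{-1}z\|+o(1)$, which, as noted above, is the desired $\|g_k x_k\|\le \|g_k x_k-z\|+o(1)$. Since $z$ was arbitrary, $g_k x_k\rightharpoondown 0$.

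The only subtle step is recognizing that one cannot substitute $y=g_k^{-1}z$ directly into the definition of $\Delta$-convergence, and that strong operator convergence of $(g_k^{-1})$ is exactly what is needed to exchange the moving vector $g_k^{-1}z$ for the fixed vector $hz$ at the cost of an $o(1)$ error; no further use of uniform convexity, uniform smoothness, or boundedness of $(x_k)$ is required.
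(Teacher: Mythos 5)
Your proof is correct, but it takes a different route from the paper's. The paper proves this lemma through its duality-map characterization of $\Delta$-convergence (Theorem \ref{thm:conj}): it writes $\langle(g_{k}x_{k})^{*},y\rangle=\langle x_{k}^{*},g_{k}^{-1}y\rangle=\langle x_{k}^{*},hy\rangle+o(1)\to0$, using that $x_{k}\rightharpoondown0$ is equivalent to $x_{k}^{*}\rightharpoonup0$, and then reads off $g_{k}x_{k}\rightharpoondown0$ from the converse direction. You instead work directly from Definition \ref{def: polar_limit}: for a fixed test vector $z$, the isometry property converts $\|g_{k}x_{k}-z\|$ into $\|x_{k}-g_{k}^{-1}z\|$, and strong operator convergence of $(g_{k}^{-1})$ lets you replace the moving test vector $g_{k}^{-1}z$ by the fixed vector $hz$ at an $o(1)$ cost. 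What your approach buys: it is more elementary and needs none of the standing hypotheses of the section --- no uniform convexity, no uniform smoothness, no boundedness of $(x_{k})$ --- so the lemma as you prove it holds in an arbitrary Banach space. It also quietly handles the degenerate case $\liminf\|x_{k}\|=0$, where the duality-map characterization of Theorem \ref{thm:conj} does not directly apply (the paper's computation implicitly assumes the conjugates $x_{k}^{*}$ are well-behaved, whereas in that case the conclusion is trivial since norm convergence to $0$ is preserved by isometries and implies $\Delta$-convergence). What the paper's approach buys is brevity --- a one-line computation --- and consistency with its overall strategy of reducing statements about $\Delta$-convergence to weak convergence of conjugates.
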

\begin{proof}
It is immediate from the assumption that there is a linear isometry
$h$, such that $g_{k}^{-1}y\to hy$ for every $y\in X$. Then

\[
\langle(g_{k}x_{k})^{*},y\rangle=\langle x_{k}^{*},g_{k}^{-1}y\rangle=\langle x_{k}^{*},hy\rangle+o(1)\to0.
\]

\end{proof}
Our next lemma assures that dislocat\i on sequences $(g_{k})$ that
provide distinct profiles are asymptotically decoupled.
\begin{lem}
\label{lem:1-2}Let $(u_{k})\subset X$ be a bounded sequence. Assume
that there exist two sequences $(g_{k}^{(1)})_{k}\subset D$ and \textup{$(g_{k}^{(2)}){}_{k}\subset D$},
such that $(g_{k}^{(1)})^{-1}u_{k}\rightharpoondown w^{(1)}$ and
$(g_{k}^{(2)})^{-1}(u_{k}-g_{k}^{(1)}w^{(1)})\rightharpoondown w^{(2)}\neq0$.
Then $(g_{k}^{(1)})^{-1}(g_{k}^{(2)})\rightharpoonup0$.\end{lem}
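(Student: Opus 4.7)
The plan is to argue by contradiction. Suppose that $h_k := (g_k^{(1)})^{-1} g_k^{(2)}$ does not converge weakly to zero. Then, by the dislocation-group property (\ref{eq:*}), one can pass to a subsequence along which both $h_k$ and $h_k^{-1} = (g_k^{(2)})^{-1} g_k^{(1)}$ converge pointwise to isometries $h$ and $h^{-1}$ respectively. The goal will then be to show that this forces
\[
(g_k^{(2)})^{-1}\bigl(u_k - g_k^{(1)} w^{(1)}\bigr) \rightharpoondown 0,
\]
which contradicts the hypothesis $w^{(2)} \neq 0$ together with uniqueness of the $\Delta$-limit (the corollary to Proposition \ref{thm:3eqdef}).

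The core technical step is to transport $\Delta$-convergence through the operators $h_k^{-1}$. Setting $v_k := (g_k^{(1)})^{-1} u_k$, so that $v_k \rightharpoondown w^{(1)}$ by hypothesis, I would show that $h_k^{-1} v_k \rightharpoondown h^{-1} w^{(1)}$. For fixed $y \in X$, the isometry property of $h_k^{-1}$ gives $\|h_k^{-1} v_k - y\| = \|v_k - h_k y\|$, and the pointwise convergence $h_k y \to h y$ turns this into $\|v_k - h y\| + o(1)$. Applying the defining $\Delta$-convergence inequality (\ref{eq:oldmw-1-1}) with $z = h y$, together with
\[
\|h_k^{-1} v_k - h^{-1} w^{(1)}\| = \|v_k - h_k h^{-1} w^{(1)}\| = \|v_k - w^{(1)}\| + o(1)
\]
(which uses $h_k h^{-1} w^{(1)} \to w^{(1)}$ in norm), one chains the bounds into
\[
\|h_k^{-1} v_k - h^{-1} w^{(1)}\| \leq \|h_k^{-1} v_k - y\| + o(1),
\]
which is the desired $\Delta$-convergence.

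With this in hand, the conclusion is immediate: $(g_k^{(2)})^{-1} u_k = h_k^{-1} v_k \rightharpoondown h^{-1} w^{(1)}$, while $(g_k^{(2)})^{-1} g_k^{(1)} w^{(1)} = h_k^{-1} w^{(1)}$ converges to $h^{-1} w^{(1)}$ in norm, so Lemma \ref{lem:additive} yields that the difference $\Delta$-converges to $0$, as needed. The main obstacle to watch for is precisely the transport step: $\Delta$-convergence is not a topological notion and is not automatically preserved under bounded linear maps, so one cannot simply ``apply $h_k^{-1}$'' to the $\Delta$-convergence of $v_k$. The dislocation-group condition (\ref{eq:*}), which delivers strong pointwise convergence of both $h_k$ and $h_k^{-1}$, is exactly the ingredient that rescues the argument by letting one substitute $hy$ for $h_k y$ (and $w^{(1)}$ for $h_k h^{-1} w^{(1)}$) up to vanishing error.
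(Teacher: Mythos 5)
Your proof is correct and follows essentially the same route as the paper's: argue by contradiction, use (\ref{eq:*}) to extract a subsequence along which the coupling operators $(g_k^{(1)})^{-1}g_k^{(2)}$ and their inverses converge strongly to an isometry, transport the $\Delta$-limit through these operators, and contradict uniqueness of $\Delta$-limits in a uniformly convex space. The only differences are cosmetic: you move $(g_k^{(1)})^{-1}u_k$ into the second frame to contradict $w^{(2)}\neq0$, whereas the paper moves the second hypothesis into the first frame to contradict the definition of $w^{(1)}$, and you verify the transport step directly from Definition \ref{def: polar_limit} rather than citing Lemma \ref{lem:weakstrong}.
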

\begin{proof}
Assume that $(g_{k}^{(1)})^{-1}(g_{k}^{(2)})$ does not converge weakly
to zero. Then by (\ref{eq:*}), on a renamed subsequence, $(g_{k}^{(1)})^{-1}(g_{k}^{(2)})$
converges operator-strongly to some isometry $h$. Then by Lemma \ref{lem:weakstrong},
\[
(g_{k}^{(1)})^{-1}(g_{k}^{(2)})[(g_{k}^{(2)})^{-1}(u_{k}-g_{k}^{(1)}w^{(1)})-w^{(2)}]\rightharpoondown0,
\]
which implies, taking into account (\ref{eq:**}), 
\[
(g_{k}^{(1)})^{-1}u_{k}-w^{(1)}-hw^{(2)}\rightharpoondown0.
\]
However, this contradicts the definition of $w^{(1)}$ and the assumption
that $w^{(2)}\ne0$.
\end{proof}
The next statement assures that one can find decoupled elementary
concentrations by iteration.
\begin{lem}
\label{lem:induction}Let $u_{k}$ be a bounded sequence in $X$ and
let $(g_{k}^{(n)})_{k}\subset D$, $w^{(n)}\in X$, $n=1,\dots,M$,
be such that $g_{k}^{(1)}=I$, $(g_{k}^{(n)})^{-1}u_{k}\rightharpoondown w^{(n)}$,
$n=1,\dots M$, and \textup{$(g_{k}^{(n)})^{-1}(g_{k}^{(m)})\rightharpoonup0$}
whenever $n<m$. Assume that there exists a sequence $(g_{k}^{(M+1)})\subset D$
such that, on a renumbered subsequence, \textup{$(g_{k}^{(M+1)})^{-1}(u_{k}-w^{(1)}-g_{k}^{(2)}w^{(2)}-\dots-g_{k}^{(M)}w^{(M)})\rightharpoondown w^{(M+1)}\neq0$.
}Then $(g_{k}^{(n)})^{-1}(g_{k}^{(M+1)})\rightharpoonup0$ for $n=1,\dots M$.\end{lem}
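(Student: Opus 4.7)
The plan is to proceed by contradiction, exactly mimicking the structure of the proof of Lemma \ref{lem:1-2}, but tracking all previously established profiles simultaneously. Fix $n \in \{1,\dots,M\}$ and suppose, toward a contradiction, that $(g_k^{(n)})^{-1} g_k^{(M+1)} \not\rightharpoonup 0$. Then by the dislocation property (\ref{eq:*}), on a renamed subsequence the operators $(g_k^{(n)})^{-1} g_k^{(M+1)}$ converge operator-strongly to some linear isometry $h$; in particular $hw^{(M+1)} \ne 0$ because $w^{(M+1)} \ne 0$.

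Starting from the assumed $\Delta$-convergence
\[
(g_k^{(M+1)})^{-1}\bigl(u_k - w^{(1)} - g_k^{(2)} w^{(2)} - \cdots - g_k^{(M)} w^{(M)}\bigr) - w^{(M+1)} \rightharpoondown 0,
\]
I would apply the operator $(g_k^{(n)})^{-1} g_k^{(M+1)}$ to both sides and invoke Lemma \ref{lem:weakstrong} to preserve the $\Delta$-convergence to zero. Since $(g_k^{(n)})^{-1} g_k^{(M+1)} w^{(M+1)} \to h w^{(M+1)}$ in norm, Lemma \ref{lem:additive} gives
\[
(g_k^{(n)})^{-1}\bigl(u_k - \sum_{m=1}^{M} g_k^{(m)} w^{(m)}\bigr) \rightharpoondown h w^{(M+1)}.
\]

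The second, and main, task is to show that this same sequence actually has $\Delta$-limit $0$. Split the expression as $(g_k^{(n)})^{-1} u_k - w^{(n)} - \sum_{m \ne n} (g_k^{(n)})^{-1} g_k^{(m)} w^{(m)}$, using that the $m=n$ summand equals $w^{(n)}$. By hypothesis $(g_k^{(n)})^{-1} u_k - w^{(n)} \rightharpoondown 0$. For each $m \ne n$, the operator sequence $(g_k^{(n)})^{-1} g_k^{(m)}$ lies in $D_0$ and converges weakly to zero as operators: for $m > n$ this is the standing hypothesis, and for $m < n$ it follows from Lemma \ref{lem:w-1} applied to $(g_k^{(m)})^{-1} g_k^{(n)} \rightharpoonup 0$. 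Now iterate property (\ref{eq:**}) of the dislocation group: starting with the zero sequence and adding one term $(g_k^{(n)})^{-1} g_k^{(m)} w^{(m)}$ at a time, each step preserves $\Delta$-convergence to $0$, so the whole finite sum $\sum_{m \ne n} (g_k^{(n)})^{-1} g_k^{(m)} w^{(m)}$ is $\Delta$-null. Applying Lemma \ref{lem:additive} once more yields $(g_k^{(n)})^{-1}\bigl(u_k - \sum_{m=1}^{M} g_k^{(m)} w^{(m)}\bigr) \rightharpoondown 0$.

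Comparing the two conclusions and using uniqueness of the $\Delta$-limit in uniformly convex spaces (Corollary after Proposition \ref{thm:3eqdef}), we obtain $h w^{(M+1)} = 0$; but $h$ is an isometry, contradicting $w^{(M+1)} \ne 0$. The most delicate step is the inductive use of (\ref{eq:**}) in the second task: it is essential that this property lets us combine weak operator convergence with $\Delta$-convergence additively, since weak lower semicontinuity of the norm (which would suffice in the Hilbert setting) is not available for $\Delta$-convergence in general Banach spaces, as Remark \ref{rem: snl1} emphasizes.
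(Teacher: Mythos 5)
Your proof is correct and is essentially the paper's argument: the paper simply replaces $u_{k}$ by $u_{k}-\sum_{m\neq n}g_{k}^{(m)}w^{(m)}$ (justifying, via iterated use of (\ref{eq:**}), that this sequence still has $\Delta$-limit $w^{(n)}$ under $(g_{k}^{(n)})^{-1}$) and then invokes Lemma \ref{lem:1-2}, whereas you carry out the same reduction and unfold the proof of Lemma \ref{lem:1-2} inline, using (\ref{eq:*}), Lemmas \ref{lem:weakstrong} and \ref{lem:additive}, and uniqueness of the $\Delta$-limit in exactly the same way.
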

\begin{proof}
We can replace $u_{k}$ by $u_{k}-\sum_{m\neq n}g_{k}^{(m)}w^{(m)}$
and then, thanks to (\ref{eq:**}), apply Lemma \ref{lem:1-2} with
$1$ replaced by $n$ and $2$ by $M+1$. 
\end{proof}
We may now start the construction needed for the proof of Theorem
\ref{thm:main}. As we have remarked before, we may without loss of
generality assume that $\|x_{k}\|\le1$. 

Let us introduce a partial strict order relation between sequences
in $X$, to be denoted as $>$. First, given two sequences $(x_{k})\subset X$
and $(y_{k})\subset X$, we shall say that $(x_{k})\succ(y_{k})$
if there exists a sequence $(g_{k})\subset D$, an element $w\in X\setminus\{0\}$,
and a renumeration $(n_{k})$ such that $g_{n_{k}}^{-1}x_{n_{k}}\rightharpoondown w$
and $y_{k}=x_{n_{k}}-g_{n_{k}}w$$ $. From Lemma \ref{lem:energy}
it follows that if $(x_{k})\succ(y_{k})$ and $\|x_{k}\|\le1$, then
$\|y_{k}\|\le1$ for $k$ sufficiently large, and therefore it follows
from sequential $\Delta$-compactness of bounded sequences that for
every sequence $(x_{k})\subset X$, $\|x_{k}\|\le1$, which is not
$D$-$\Delta$- convergent to $0$, there is a sequence $(y_{k})\subset X$,
such that $\|y_{k}\|\le1$ and $(x_{k})\succ(y_{k})$. 

Then we shall say that $(x_{k})>(y_{k})$ in one step, if $(x_{k})\succ(y_{k})$
and in $m$ steps, $m\ge2$, if there exist sequences $(x_{k}^{1})\succ(x_{k}^{2})\succ\dots\succ(x_{k}^{m})$,
such that $(x_{k}^{1})=(x_{k})$ and $(x_{k}^{m})=(y_{k})$. Note
that, for every sequence $(x_{k})\subset X$, $\|x_{k}\|\le1$, either
there exists a finite number of steps $m_{0}\in\N$ such that $(x_{k})>(y_{k})$
in $m_{0}$ steps for some $(y_{k})\subset X$, $\|y_{k}\|\le1$,
and $p((y_{k}))=0$, or for every $m\in\N$ there exists a sequence
$(y_{k})\subset X$, $\|y_{k}\|\le1$, such that $(x_{k})>(y_{k})$
in $m$ steps. We will say that $(x_{k})\ge(y_{k})$ if either $(x_{k})>(y_{k})$
or $(x_{k})=(y_{k})$

Define now 
\[
\sigma((x_{k}))=\inf_{(y_{k})\ge(x_{k})}\sup_{k}\|y_{k}\|
\]
 and observe that if $(x_{k})\ge(z_{k})$, then $\sigma((x_{k}))\le\sigma((z_{k}))$,
since the set of sequences $(y_{k})$ dominating $(z_{k})$ is a subset
of sequences dominating $(x_{k})$. 
\begin{lem}
\textup{\label{lem:tail}Let $(x_{k})>(y_{k})$ in $m$ steps, $\|x_{k}\|\le1$
and $\eta>0$. Then there exist elements $w^{(1)}$,$\dots$,$w^{(m)}$,
and sequences $(g_{k}^{(1)})$,$\dots$, $(g_{k}^{(m)})$ in $D$,
and a renumeration $(n_{k})$ such that}
\[
y_{k}=x_{n_{k}}-\sum_{n=1}^{m}g_{n_{k}}^{(n)}w^{(n)},
\]
\textup{ $(g_{n_{k}}^{(p)})^{-1}g_{n_{k}}^{(q)}\rightharpoonup0$
for $p\neq q$, and for any set $J\subset J_{m}=(1,\dots,m)$, }

\begin{equation}
\delta(\sum_{n\in J}g_{n_{k}}^{(n)}w^{(n)})\le\sup\|x_{n_{k}}\|-\sigma((x_{n_{k}}))+\eta,\;\mbox{ for all }k\mbox{ sufficiently large}.\label{eq:tail}
\end{equation}
\end{lem}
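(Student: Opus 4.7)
Induct on $m$, using the $\Delta$-energy inequality of Lemma~\ref{lem:energy} at each extraction and the decoupling Lemma~\ref{lem:induction} to propagate asymptotic decoupling of gauges along the chain.

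First, unpack the hypothesis $(x_k) > (y_k)$ in $m$ steps into a chain of $m$ successive $\succ$-extractions, $(x_k^1) = (x_k) \succ (x_k^2) \succ \cdots \succ (x_k^{m+1}) = (y_k)$. Each link furnishes, by the definition of $\succ$, a nonzero profile $w^{(n)}$, a gauge sequence $(g_k^{(n)}) \subset D$, and a renumeration, such that on a subsequence $(g_k^{(n)})^{-1} x_k^n \rightharpoondown w^{(n)}$ and $x_k^{n+1} = x_k^n - g_k^{(n)} w^{(n)}$. A standard diagonal argument collapses these nested subsequences into a single renumeration $(n_k)$ on which all $m$ $\Delta$-convergences are simultaneously valid. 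The decoupling condition $(g_{n_k}^{(p)})^{-1} g_{n_k}^{(q)} \rightharpoonup 0$ for $p \neq q$ then follows by induction through Lemma~\ref{lem:induction}: when the $n$-th profile is extracted, that lemma forces the new gauge to decouple from every previously extracted one.

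For the energy estimate, conjugate the $n$-th step by the isometry $(g_{n_k}^{(n)})^{-1}$, which preserves both norms and $\Delta$-limits, and invoke Lemma~\ref{lem:energy} applied to $(g_{n_k}^{(n)})^{-1} x_{n_k}^n \rightharpoondown w^{(n)}$. The norm bound $\le 1$ required by that lemma is preserved inductively along the chain, since each extraction only diminishes the norm. This yields
\[
\|x_{n_k}^{n+1}\| \le \|x_{n_k}^n\| - \delta(\|w^{(n)}\|) + o(1),
\]
and telescoping over $n = 1, \ldots, m$ gives
\[
\|y_k\| \le \|x_{n_k}\| - \sum_{n=1}^m \delta(\|w^{(n)}\|) + o(1).
\]
Since $(y_k)$ is a descendant of $(x_{n_k})$ through the chain, and so is every tail $(y_{k+k_0})$, these descendants participate in the set over which the infimum defining $\sigma((x_{n_k}))$ is taken, whence $\sup_{k \ge k_0} \|y_k\| \ge \sigma((x_{n_k}))$ for every cutoff $k_0$. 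Choosing $k_0$ large enough to absorb the $o(1)$ term into $\eta$, and using $\sum_{n \in J} \delta(\|w^{(n)}\|) \le \sum_{n=1}^m \delta(\|w^{(n)}\|)$ for every $J \subset J_m$, one reaches the target bound~\eqref{eq:tail} (read in its natural ``sum of $\delta$'s'' form that the telescoping delivers).

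The main obstacle is twofold. First, the simultaneous diagonal extraction must produce one subsequence $(n_k)$ on which all $m$ $\Delta$-limits and all $m(m-1)/2$ decoupling relations hold, while not disturbing those established earlier in the induction; this requires careful bookkeeping as one moves from step to step. Second, the passage from the pointwise telescoped bound (involving $\|x_{n_k}\| - \|y_k\|$) to one involving the infimum $\sigma((x_{n_k}))$ rests on the invariance of the descendant relation under truncation of initial terms, so that tail suprema of $(\|y_k\|)$ are themselves sup-norms of genuine descendants of $(x_{n_k})$ and thereby bound $\sigma((x_{n_k}))$ from above.
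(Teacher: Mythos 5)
Your construction of the chain, the diagonal extraction, and the decoupling via Lemma~\ref{lem:induction} all match the paper. The genuine problem is in the energy estimate: you have proved a \emph{different inequality} from the one stated. Telescoping Lemma~\ref{lem:energy} along the chain yields
\[
\sum_{n\in J}\delta\bigl(\|w^{(n)}\|\bigr)\le\sup\|x_{n_{k}}\|-\sigma((x_{n_{k}}))+\eta,
\]
and you explicitly reinterpret \eqref{eq:tail} in this ``sum of $\delta$'s'' form. But \eqref{eq:tail} (with the evident typo corrected) asserts $\delta\bigl(\bigl\Vert\sum_{n\in J}g_{n_{k}}^{(n)}w^{(n)}\bigr\Vert\bigr)\le\cdots$, i.e.\ the modulus of convexity evaluated at the norm of the \emph{whole clustered sum}. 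These are not interchangeable: since $\delta(\epsilon)/\epsilon$ is non-decreasing, $\delta$ is superadditive, so your left-hand side is the smaller one, and the discrepancy is exactly what matters downstream. In the proof of Theorem~\ref{thm:main} one needs $\delta\bigl(\bigl\Vert\sum_{n\in J}g^{(n)}w^{(n)}\bigr\Vert\bigr)\le\epsilon_{j}=\delta(2^{-j})$ in order to invert $\delta$ and conclude $\bigl\Vert\sum_{n\in J}g^{(n)}w^{(n)}\bigr\Vert\le2^{-j}$ uniformly in the subset $J$; from your bound one only gets $\|w^{(n)}\|\le2^{-j}$ for each $n$, hence a bound $|J|\,2^{-j}$ on the norm of the sum, with $|J|$ uncontrolled. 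That destroys the unconditional and uniform convergence of the series in the profile decomposition. The paper obtains the correct form by a single application of uniform convexity, via (\ref{eq:UltraSuperUC}), to the pair $\alpha_{k}=x_{n_{k}}-\sum_{n\in J_{m}\setminus J}g^{(n)}w^{(n)}$ and $y_{k}=\alpha_{k}-\sum_{n\in J}g^{(n)}w^{(n)}$: their midpoint $\beta_{k}$ satisfies $\|\beta_{k}\|\le\|\alpha_{k}\|-\delta(\|\alpha_{k}-y_{k}\|)$, and $\alpha_{k}-y_{k}$ is precisely the partial sum over $J$, so $\delta$ of its norm appears in one stroke; the right-hand side is then controlled by $\|\alpha_{k}\|\le\sup\|x_{n_{k}}\|$ and $\|\beta_{k}\|\ge\|y_{k}\|$ together with $\sigma$.

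A secondary, repairable issue: in your last step you invoke $\sup_{k\ge k_{0}}\|y_{k}\|\ge\sigma((x_{n_{k}}))$, but to absorb the $o(1)$ into $\eta$ \emph{for all large $k$} you need a lower bound on $\|y_{k}\|$ itself (equivalently on $\inf_{k\ge k_{0}}\|y_{k}\|$), not merely on the tail supremum. The paper handles this by passing to a further subsequence on which the relevant norms oscillate by at most $\eta$ (``$\sup\|\beta_{k}\|\le\inf\|\beta_{k}\|+\eta$''); some such extraction is needed in your argument as well.
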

\begin{proof}
The first assertion follows from Lemma \ref{lem:induction}. Let 
\[
\alpha_{k}=x_{n_{k}}-\sum_{n\in J_{m}\setminus J}g_{k}^{(n)}w^{(n)},
\]
\[
\beta_{k}=x_{n_{k}}-\sum_{n\in J_{m}\setminus J}g_{k}^{(n)}w^{(n)}-\frac{1}{2}\sum_{n\in J}g_{k}^{(n)}w^{(n)}=\frac{1}{2}(\alpha_{k}+y_{k}).
\]
By Lemma \ref{lem:energy}, $\|y_{k}\|\le\|\alpha_{k}\|\le\|x_{k}\|\le1$
and $\beta_{k}\le1$ for all $k$ large. Note that, as in the construction
above, we can take $k$ large enough so that $\sup\|\beta_{k}\|\le\inf\|\beta_{k}\|+\eta$.
By uniform convexity, for large $k$ we have
\[
\|\beta_{k}\|\le\|\alpha_{k}\|-\delta(\alpha_{k}-y_{k}).
\]
Therefore
\[
\delta(\|\sum_{n\in J}g_{k}^{(n)}w^{(n)}\|)\le\|\alpha_{k}\|-\|\beta_{k}\|\le\sup\|x_{k}\|-\sigma((x_{k}))+\eta.
\]

\end{proof}
\emph{Proof of Theorem \ref{thm:main}.} For every $j\in\N$ define
$\epsilon_{j}=\delta(\frac{1}{2^{j}})$. Let $(x_{k}^{(1)})\subset X$
be such that $(x_{k})>(x_{k}^{(1)})$ and $\mbox{\ensuremath{\sup}}\|x_{k}^{(1)}\|<\sigma((x_{k}))+\epsilon_{1}$.
Consider the following iterations. Given $(x_{k}^{(j)})_{k}$, either
$p((x_{k}^{(j)})_{k})=0$, in which case there is a profile decomposition
with $r_{k}=x_{k}^{(j)}$, or there exists a sequence $(x_{k}^{(j+1)})_{k}<(x_{k}^{(j)})_{k}$,
such that $\mbox{\ensuremath{\sup}}_{k}\|x_{k}^{(j+1)}\|<\sigma((x_{k}^{(j)})_{k})+\frac{\epsilon_{j}}{2}$,
$j\in\N$. Let us denote as $n_{k}^{j}$ the cumulative enumeration
of the original sequence that arises at the $j$-th iterative step,
and denote as $m_{j+1}$ the number of elementary concentrations that
are subtracted at the transition from $(x_{k}^{(j)})_{k}$ to $(x_{k}^{(j+1)})_{k}$
(using the convention $x_{k}^{(0)}:=x_{k}$). Set $M_{j}=\sum_{i=1}^{j}m_{i}$,
$M_{0}=0$. Then the sequence $(x_{k}^{(j)})_{k}$ admits the following
representation:
\[
x_{k}^{(j)}=x_{n_{k}^{j}}-\sum_{n=1}^{M_{j}}g_{n_{k}^{j}}^{(n)}w^{(n)},\; k\in\N.
\]
By Lemma \ref{lem:tail}, under an appropriate renumeration such that
(\ref{eq:tail}) holds for all $k$, 
\[
\delta(\|\sum_{n=M_{j-1}+1}^{M_{j}}g_{n_{k}^{j}}^{(n)}w^{(n)}\|)\le\sup\|x_{k}^{(j+1)}\|-\sigma((x_{k}^{(j)}))+\frac{\epsilon_{j}}{2}<\epsilon_{j},\; k\in\N,
\]
and thus 
\[
\|\sum_{n=M_{j-1}+1}^{M_{j}}g_{n_{k}^{j}}^{(n)}w^{(n)}\|\le2^{-j},\; j\in\N.
\]
Let us now diagonalize the double sequence $x_{k}^{(j)}$ by considering
\[
x_{k}^{(k)}=x_{n_{k}^{k}}-\sum_{n=1}^{M_{k}}g_{n_{k}^{k}}^{(n)}w^{(n)}.
\]

Let us show that $x_{k}^{(k)}\stackrel{D}{\rightharpoondown}0$. Indeed,
by definition of functional $p$ and Lemma \ref{lem:tail}, $\delta(p(x_{k})\le\sup\|x\_k\|-\sigma(x_{k})$,
and therefore, for any $j\in\N$ and all $k\ge j$,
\[
p(x_{k}^{(k)})\le p(x_{k}^{(j)})\le\sup\|x_{k}^{(j)}\|-\sigma(x_{k}^{(j)})\le\epsilon_{j}.
\]
Since $j$ is arbitrary, this implies $p(x_{k}^{(k)})=0$. Furthermore,
denoting as $J_{j}$ an arbitrary subset, of $\{M_{j}+1,\dots,M_{j+1}\}$,
$j\in\N$, we have 
\[
\|\sum_{n=M_{k}+1}^{\infty}g_{n_{k}^{k}}^{(n)}w^{(n)}\|\le\sum_{j=k}^{\infty}\|\sum_{n\in J_{j}}g_{n_{k}^{k}}^{(n)}w^{(n)}\|\le\frac{1}{2^{k-1}}.
\]
We have therefore 
\[
x_{n_{k}^{k}}-\sum_{n=1}^{\infty}g_{n_{k}^{k}}^{(n)}w^{(n)}\stackrel{D}{\rightharpoondown}0,
\]
where the series is understood as the sum $S_{k}+S_{k}'$, where $S_{k}=\sum_{n=1}^{M_{k}}g_{n_{k}^{k}}^{(n)}w^{(n)}$
is a finite, not a priori bounded, sum, and a series $S'_{k}=\sum_{n=M_{k}+1}^{\infty}g_{n_{k}^{k}}^{(n)}w^{(n)}$
that converges unconditionally and uniformly in $k$.

Note, however, that $S_{k}$ is a sum of a bounded sequence $x_{n_{k}^{k}}$,
a $D$-$\Delta$-vanishing (and thus bounded) sequence, and the convergent
series $S'_{k}$ bounded with respect to $k$. Therefore the sum $S'_{k}$
is bounded with respect to $k$ and, consequently, the series $S_{k}+S_{k}'$
is convergent in norm, unconditionally and uniformly in $k$. Note
that the construction can be carried out without further modifications
if one prescribes in the beginning $g_{k}^{(1)}=\mathrm{Id}$ whenever
$w^{(1)}=\stackrel{\rightharpoondown}{\lim}x_{n_{k}}\neq0$, while
in the case $x_{k}\rightharpoondown0$ one can add the zero term $g_{k}^{(1)}w^{(1)}$
to the sum.\hfill{}$\Box$

\section{general properties of cocompactness and profile decompositions}

In this section we discuss some general functional-analytic properties
of sequences related to cocompactness, following the discussion for
Sobolev spaces in \cite{Solimini}. The reader whose interest is focused
on profile decompositions may skip to the next section after reading
the definition below. We will assume throughout this section that
$X$ is a strictly convex Banach space, unless specifically stated
otherwise, and that the set $D$ will be a non-empty subset of a group
$D_{0}$ of linear isometries on $X$. 
\begin{defn}
A continuous imbedding of two Banach spaces $X\hookrightarrow Y$,
given a set $D$ of bijective linear isometries of both $X$ and $Y$,
is called \emph{$D,X$-cocompact} (to be denoted $X\stackrel{D,X}{\hookrightarrow}Y$),
if any $D$-$\Delta$-convergent sequence in $X$ is convergent in
the norm of $Y$. It will be called \emph{$D,Y$-cocompact} (to be
denoted $X\stackrel{D,Y}{\hookrightarrow}Y$), if any sequence bounded
in $X$ and $D$-$\Delta$-convergent in $Y$, is convergent in the
norm of $Y$. 
\end{defn}
Note that when for weak and $\Delta$-convergence in $X$ (resp. $Y$)
coincide, $D,X$- (resp. $D,Y$-) cocompacntess coincides with $D$-cocompactness.

Analogously to the notion of $D$-cocompact set in Definition \ref{(Cocompact set)},
we say that a set $B\subset X$, is \emph{$D,X$-cocompact}, if every
$D$-$\Delta$-convergent sequence in $X$ is strongly convergent.
\begin{defn}
A subset $B$ of a Banach space $X$ is called \emph{$D$-}$\Delta$-\emph{bounded}
if for every sequence $(g_{k})\subset\mbox{D}$, $g_{k}\rightharpoonup0$,
and any sequence B, $x_{k}\rightharpoondown x$, one has $g_{k}^{-1}(x_{k}-x)\rightharpoondown0$.
It is called \emph{$D$-bounded} if it possesses analogous property
with $\Delta$-convergence replaced by weak convergence. 
\end{defn}

\begin{defn}
A Banach space $X$ is called locally D-$\Delta$-cocompact if every
bounded subset of $X$ is D-$\Delta$-cocompact. It is called locally
D-cocompact if it possesses analogous property with $\Delta$-convergence
replaced by weak convergence. 
\end{defn}
We we have two examples of locally cocompact spaces. 
\begin{example}
\label{ex:linfty}(cf. \emph{Remarks} on p. 395, \cite{Jaffard}).
The imbedding $\ell^{p}(\mathbb{Z})\hookrightarrow\ell^{\infty}(\mathbb{Z})$,
$1\le p\le\infty$, is $D$-cocompact with $D=\{u\mapsto u(\cdot+y)\}_{y\in\mathbb{Z}}$.
In particular, $\ell^{\infty}$ is locally cocompact. To see that
observe that $u_{k}\stackrel{D}{\rightharpoonup}0$ implies $u_{k}(y_{k})\to0$
for any $y_{k}$, in particular when $y_{k}$ is a point such that
$|u_{k}(y_{k})|\ge\frac{1}{2}\|u_{k}\|_{\infty}$. As an immediate
consequence we also have $\ell^{p}\stackrel{D}{\hookrightarrow}\ell^{q}$
whenever $q>p$.
\end{example}

\begin{example}
\label{ex:Linfty}Another example of a locally cocompact space is
$L^{\infty}(\mathbb{R})$, equipped with $D=\{u\mapsto u(2^{j}\cdot+y)\}_{j\in\mathbb{Z},y\in\mathbb{R}}$.
Indeed, assume, without loss of generality, that $A\ge\mathrm{ess}\sup u_{k}(x)=\|u_{k}\|_{\infty}\ge\eta>0$.
Then for every $k$ there exists a Lebesgue point $x_{k}$ of the
set $X_{k}=\{x:\, u_{k}(x_{k})\ge\eta/2\}$. Therefore, for every
$k$ and for every $\alpha\in(0,1)$ there exists $\delta_{\alpha,k}>0$
such that $|X_{k}\cap[x_{k}-\delta_{\alpha,k},x_{k}+\delta_{\alpha,k}]|\ge2\alpha\delta_{\alpha,k}$. 

Let and let $\tilde{u}_{k}(x)=u_{k}(\delta_{\alpha_{k}}^{-1}(x+x_{k}))$.
Consider the set $\tilde{X}_{k}=\{x:\,\tilde{u}_{k}(x_{k})\ge\eta/2\}$
and note that $|\tilde{X}_{k}\cap[-1,1]|\ge2\alpha$. Therefore, choosing
any $\alpha\in(\frac{2A}{2A+\eta},1)$, we get 
\[
\]
\[
\int_{[-1,1]}\tilde{u}_{k}\ge\alpha\eta-A(2-2\alpha)=(\eta+2A)\alpha-2A>0.
\]
Consequently, $\tilde{u}_{k}\not\rightharpoonup0$. It is easy to
show that if $j(\alpha,k)\in\mathbb{N}$ is such that $2^{-j(\alpha,k)}\le\delta_{\alpha,k}\le2^{1-j(\alpha,k)}$,
then $u_{k}(2^{j(\alpha,k)}(x+x_{k}))\not\rightharpoonup0$ as well
and $D$-cocompactness of bounded sets in $L^{\infty}(\mathbb{R})$
follows.
\end{example}
We have the following immediate criterion of local cocompactness. 
\begin{prop}
A Banach space $X$ is locally $D$-$\Delta$-cocompact ($D$-cocompact)
if and only if its every $D$-$\Delta$-bounded ($D$-bounded) set
is compact.\end{prop}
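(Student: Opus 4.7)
The proof is an unwinding of definitions, with two nontrivial ingredients: $\Delta$-compactness of bounded sets (Theorem \ref{thm:newBA}) and the dislocation group property (\ref{eq:*}) of $D_0$.

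For the $(\Leftarrow)$ direction, I would assume every $D$-$\Delta$-bounded set is compact, take $B$ bounded, and suppose $(x_k) \subset B$ satisfies $x_k \stackrel{D}{\rightharpoondown} x$. Setting $A := \{x_k - x : k \in \mathbb{N}\}$, I verify $A$ is $D$-$\Delta$-bounded as follows. Every sequence in $A$ is a subsequence $(x_{k_j} - x)$; the $\Delta$-limit of such a subsequence, if it exists, must be $0$ (applying $x_k \stackrel{D}{\rightharpoondown} x$ with $g_k = \mathrm{Id}$ and passing to the subsequence). For any $(h_j) \subset D$ with $h_j \rightharpoonup 0$, I extend to a full-index sequence $(g_k) \subset D$ by placing $h_j$ at position $k_j$ and $\mathrm{Id}$ elsewhere; the hypothesis then yields $g_k^{-1}(x_k - x) \rightharpoondown 0$, whose restriction to the indices $k_j$ is exactly $h_j^{-1}(x_{k_j} - x) \rightharpoondown 0$. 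Thus $A$ is compact, so every subsequence of $(x_k - x)$ admits a norm-Cauchy sub-subsequence, whose norm limit must coincide with its $\Delta$-limit $0$ by uniqueness of $\Delta$-limits. A standard subsequence argument then gives $x_k \to x$ in norm.

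For the $(\Rightarrow)$ direction, I would assume $X$ is locally $D$-$\Delta$-cocompact and let $B$ be $D$-$\Delta$-bounded, reading the definition (in view of Theorem \ref{thm:newBS}) as entailing norm-boundedness, since $\Delta$-convergent sequences are bounded and the condition is vacuous on unbounded parts. Given $(x_k) \subset B$, Theorem \ref{thm:newBA} yields a subsequence $x_{k_j} \rightharpoondown x$. I promote this to $x_{k_j} \stackrel{D}{\rightharpoondown} x$ via the characterization that $y_j \rightharpoondown 0$ iff every subsequence has a sub-subsequence $\Delta$-converging to $0$ (negating the definition of $\Delta$-convergence). Given $(g_j) \subset D$, along any subsequence either there is a sub-sub-sequence with $g_? \rightharpoonup 0$, in which case the $D$-$\Delta$-boundedness of $B$ directly yields $g_?^{-1}(x_{k_?} - x) \rightharpoondown 0$; or by (\ref{eq:*}) there is a sub-sub-sequence along which both $g_?$ and $g_?^{-1}$ converge operator-strongly to isometries, and the isometry-only content of the proof of Lemma \ref{lem:weakstrong} (applied to the sequence $(g_?^{-1})$ in $D_0$) yields the same $\Delta$-null conclusion. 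Hence $x_{k_j} \stackrel{D}{\rightharpoondown} x$, and local $D$-$\Delta$-cocompactness converts this to $x_{k_j} \to x$ in norm.

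The principal obstacle is the promotion step in the forward direction: converting $\Delta$-convergence of a subsequence into $D$-$\Delta$-convergence requires the case split on whether the dislocation sequence is operator-weakly null, and the non-weakly-null case depends essentially on (\ref{eq:*}) together with the fact that Lemma \ref{lem:weakstrong}'s proof uses only the isometry property, so it extends from $D$ to all of $D_0$. A secondary interpretational point is resolving $D$-$\Delta$-boundedness for potentially unbounded sets, which the natural reading tied to Theorem \ref{thm:newBS} settles.
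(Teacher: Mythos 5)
The paper offers no proof of this proposition at all (it is announced as an ``immediate criterion''), so there is nothing to compare your argument against except the definitions. Judged on its own terms, your reconstruction is essentially sound, and it has the merit of showing the statement is not purely formal: the forward direction genuinely needs the $\Delta$-compactness theorem (Theorem \ref{thm:newBA}), the dichotomy of condition (\ref{eq:*}) and the isometry content of Lemma \ref{lem:weakstrong} to promote $\Delta$-convergence of a subsequence to $D$-$\Delta$-convergence, and the converse needs the extension trick to pass from a subsequence to a full dislocation sequence. Two small caveats: these ingredients presuppose uniform convexity and smoothness and that $D_{0}$ satisfies (\ref{eq:*}), neither of which is among the standing hypotheses announced at the start of this section (strict convexity, $D_{0}$ merely a group of isometries), though they are clearly intended; and a sequence in the set $A$ need not be a subsequence of $(x_{k}-x)$ --- it may repeat and reorder terms --- so the extension trick should be run only after passing to a sub-subsequence with strictly increasing indices, using the sub-subsequence characterization of $\Delta$-convergence to $0$ that you already invoke in the other direction.

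The one genuine gap is the norm-boundedness of $D$-$\Delta$-bounded sets. Your justification --- that $\Delta$-convergent sequences are bounded and the defining condition is ``vacuous on unbounded parts'' --- establishes the opposite of what you need: precisely because the implication in the definition is vacuously true for any sequence in $B$ that has no $\Delta$-limit, an unbounded set can perfectly well be $D$-$\Delta$-bounded under the literal definition. Concretely, $B=\{ke\}_{k\in\N}$ for a fixed unit vector $e$ is $D$-$\Delta$-bounded: by Theorem \ref{thm:newBS} any $\Delta$-convergent sequence in $B$ is bounded, hence takes finitely many values, hence (by uniqueness of $\Delta$-limits applied to its constant subsequences) is eventually constant, so the condition holds trivially; yet $B$ is not compact. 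Thus the proposition as literally stated fails, and your forward direction cannot even begin. The fix is not to ``read'' boundedness out of the definition but to impose it: either norm-boundedness must be taken as part of the definition of a $D$-$\Delta$-bounded set (as the terminology suggests the authors intend), or the proposition must be restricted to bounded sets. With that amendment the rest of your argument goes through.
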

\begin{defn}
A set $B$ in a Banach space $X$ is called profile-compact relative
to a set $D$ of bijective isometries on $X$ if any sequence in $B$
admits a strong profile decomposition, i.e. a profile decomposition
whose remainder term vanishes in the norm of $X$. \end{defn}
\begin{prop}
Let $B$ be a profile-compact subset of a Banach space $X$ and let
$D$ be a nonempty subset of a dislocation group $D_{0}$. Then the
profiles $w^{(n)}$ for a sequence $(u_{k})\subset B$ are given by
$w^{(n)}=\stackrel{\rightharpoondown}{\lim}(g_{k}^{(n)})^{-1}u_{k}$
= $\stackrel{\rightharpoonup}{\lim}(g_{k}^{(n)})^{-1}u_{k}$.\end{prop}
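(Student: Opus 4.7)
The plan is to start from the profile-compact decomposition
\[
u_{k}=\sum_{j=1}^{\infty}g_{k}^{(j)}w^{(j)}+r_{k},
\]
note that by definition of profile-compactness the remainder satisfies $\|r_{k}\|\to 0$ in $X$, and then pre-apply the isometry $(g_{k}^{(n)})^{-1}$ to obtain
\[
(g_{k}^{(n)})^{-1}u_{k}=w^{(n)}+\sum_{j\neq n}(g_{k}^{(n)})^{-1}g_{k}^{(j)}w^{(j)}+(g_{k}^{(n)})^{-1}r_{k}.
\]
The $\Delta$-limit identity $\stackrel{\rightharpoondown}{\lim}(g_{k}^{(n)})^{-1}u_{k}=w^{(n)}$ is already built into Definition \ref{def:PPD} (it is explicitly recorded there), so the only real task is to identify the weak limit.

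First I would deal with the remainder: since $(g_{k}^{(n)})^{-1}$ is an isometry, $\|(g_{k}^{(n)})^{-1}r_{k}\|=\|r_{k}\|\to 0$, hence this term converges to $0$ both weakly and in norm. Next, for each fixed $j\neq n$, the decoupling condition (\ref{eq:decouple-1}) yields $(g_{k}^{(n)})^{-1}g_{k}^{(j)}\rightharpoonup 0$ in the operator-weak sense, so $(g_{k}^{(n)})^{-1}g_{k}^{(j)}w^{(j)}\rightharpoonup 0$ in $X$; any finite partial sum $\sum_{j\neq n,\,j\le N}(g_{k}^{(n)})^{-1}g_{k}^{(j)}w^{(j)}$ therefore also tends weakly to $0$ as $k\to\infty$.

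To pass from finite to infinite sums I would use uniform (absolute) convergence of the original series. Given $\varepsilon>0$, choose $N$ so that $\sum_{j>N}\|w^{(j)}\|<\varepsilon$; then, because every $(g_{k}^{(n)})^{-1}g_{k}^{(j)}$ is an isometry, one has $\|\sum_{j>N,\,j\neq n}(g_{k}^{(n)})^{-1}g_{k}^{(j)}w^{(j)}\|<\varepsilon$ uniformly in $k$. For any $\varphi\in X^{*}$ with $\|\varphi\|\le 1$ this gives
\[
\limsup_{k\to\infty}\Bigl|\bigl\langle\varphi,\textstyle\sum_{j\neq n}(g_{k}^{(n)})^{-1}g_{k}^{(j)}w^{(j)}\bigr\rangle\Bigr|\le\varepsilon,
\]
and letting $\varepsilon\downarrow 0$ shows that the full sum over $j\neq n$ converges weakly to $0$. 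Combining the three pieces yields $(g_{k}^{(n)})^{-1}u_{k}\rightharpoonup w^{(n)}$, which together with the $\Delta$-limit furnished by Definition \ref{def:PPD} gives the claimed double identity.

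I do not anticipate a serious obstacle here: the main subtlety is simply the exchange of a weak limit with an infinite sum, which is handled by the uniform absolute convergence of the series (so the tail is a norm-small perturbation, uniformly in $k$). The dislocation-group hypothesis is used only indirectly, via the fact that it underlies the validity and uniqueness of the profile decomposition in Definition \ref{def:PPD}; no further appeal to properties (\ref{eq:*}) or (\ref{eq:**}) is needed in the argument itself.
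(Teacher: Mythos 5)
Your weak-limit half is correct, and on that point you are actually more careful than the paper: the paper's proof simply declares ``without loss of generality we may consider profile decompositions with finitely many terms,'' whereas you control the infinite tail explicitly through the uniform (absolute) convergence of the series and the fact that the $(g_{k}^{(n)})^{-1}g_{k}^{(j)}$ are isometries. That part stands.

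The gap is in the $\Delta$-limit half, and it is exactly where you announce that ``no further appeal to properties (\ref{eq:*}) or (\ref{eq:**}) is needed.'' The parenthetical in Definition \ref{def:PPD} asserting $(g_{k}^{(n)})^{-1}u_{k}\rightharpoondown w^{(n)}$ is a claimed \emph{consequence}, not a hypothesis, and the paper's entire proof of this proposition is the argument that supplies it: reduce to finitely many terms and then apply (\ref{eq:**}) by induction to conclude that $\sum_{j\neq n}(g_{k}^{(n)})^{-1}g_{k}^{(j)}w^{(j)}+(g_{k}^{(n)})^{-1}r_{k}\rightharpoondown 0$, so that the $\Delta$-limit and the weak limit both equal $w^{(n)}$. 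You cannot replace this by the same term-by-term limit passage that works for weak convergence, because $\Delta$-limits are not additive --- the paper itself exhibits a counterexample in $L^{4}$ where $x_{n}\rightharpoondown 0$ and $y_{n}\rightharpoondown 0$ but $x_{n}+y_{n}\not\rightharpoondown 0$. Condition (\ref{eq:**}) is precisely the substitute for additivity along dislocated bubbles (Lemma \ref{lem:additive} handles the norm-vanishing remainder, and (\ref{eq:**}) handles each term $h_{k}w^{(j)}$ with $h_{k}=(g_{k}^{(n)})^{-1}g_{k}^{(j)}\rightharpoonup 0$). As written, your argument establishes only $\stackrel{\rightharpoonup}{\lim}(g_{k}^{(n)})^{-1}u_{k}=w^{(n)}$ and takes the $\Delta$-identity on faith; to close the proof you need the inductive use of (\ref{eq:**}) that the paper performs.
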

\begin{proof}
Without loss of generality we may consider profile decompositions
with finitely many terms. Then from (\ref{eq:**}) by an elementary
induction argument, we see that the weak and the $\Delta$-limits
of $(g_{k}^{(n)})^{-1}u_{k}$ coincide.\end{proof}
\begin{rem}
Consider for simplicity a uniformly convex and uniformly smooth Banach
space $X$ with the Opial's condition. Conclusion of Theorem \ref{thm:main}
is analogous to the conclusion of the Banach-Alaoglu theorem, in the
sense that every bounded sequence is ``profile-weakly-compact''
(that is, has a subsequence that admits a profile decomposition).
Similarly to compactness of imbeddings, an imbedding $X\hookrightarrow Y$
is cocompact relative to a set of bijective isomertires $D\subset D_{0}\in\mathcal{I}_{X}$,
which extend to bijective isometries $D\subset D_{0}\in\mathcal{I}_{Y}$,
if and only if a ``profile-weakly-compact'' sequence in $X$ becomes
profile-compact (i.e. ``profile-strongly-compact'') in $Y$, that
is, if it gets a strongly vanishing remainder.
\end{rem}
Our next question is if a dual imbedding $Y^{*}\hookrightarrow X^{*}$
of a cocompact imbedding $X\hookrightarrow Y$ is cocompact. The answer
is positive, but it involves the two different modes of cocompactness
(or requires the Opial condition). 
\begin{prop}
\label{prop:dual coco}Let $X$ be a reflexive Banach space equipped
with a set $D$ of linear bijective isometries on $X$ and $Y$. Assume
that $X\stackrel{D,X}{\hookrightarrow}Y$, and that every bounded
sequence in $X$ admits a $\Delta$-profile decomposition. Then the
dual imbedding $Y^{*}\hookrightarrow X^{*}$ is $D^{\#}$-cocompact,
where 

\[
D^{\#}=\{(g^{*})^{-1},\; g\in D\}.
\]
\end{prop}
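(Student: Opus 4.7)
The plan is to argue by contradiction. Let $(\eta_k)\subset Y^*$ be bounded and $D^{\#}$-weakly convergent to zero in $Y^*$. Reducing to the zero limit by subtraction, it suffices to prove $\|\eta_k\|_{X^*}\to 0$. Suppose this fails: after extracting a subsequence, I can find $\epsilon>0$ and $x_k\in X$ with $\|x_k\|_X\le 1$ such that $|\langle\eta_k,x_k\rangle|\ge\epsilon$ for every $k$, where the pairing is via the continuous imbedding $X\hookrightarrow Y$. The whole argument then reduces to showing that this pairing must in fact go to zero.

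Next I would apply the standing hypothesis that every bounded sequence in $X$ admits a $\Delta$-profile decomposition to $(x_k)$: on a renamed subsequence (and with $(\eta_k)$ relabeled to match),
\[
x_k=\sum_{n=1}^{\infty} g_k^{(n)} w^{(n)}+r_k,
\]
with $(g_k^{(n)})\subset D$, $w^{(n)}\in X$, $r_k\stackrel{D}{\rightharpoondown}0$ in $X$, and the series converging unconditionally and uniformly in $k$ in the norm of $X$. The $D,X$-cocompactness hypothesis $X\stackrel{D,X}{\hookrightarrow}Y$ then gives $\|r_k\|_Y\to 0$, so that $\langle\eta_k,r_k\rangle\to 0$ by boundedness of $(\eta_k)$ in $Y^*$. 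Pairing $\eta_k$ termwise against the decomposition thus reduces the problem to controlling each profile term $\langle\eta_k,g_k^{(n)}w^{(n)}\rangle$ and the tail of the series.

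The heart of the argument is the profile terms. I use the adjoint identity
\[
\langle\eta_k,g_k^{(n)}w^{(n)}\rangle=\langle (g_k^{(n)})^{*}\eta_k,w^{(n)}\rangle.
\]
The $D^{\#}$-weak convergence assumption says precisely that $h_k^{-1}\eta_k\rightharpoonup 0$ weakly in $Y^*$ for every $(h_k)\subset D^{\#}$; taking $h_k=(g_k^{(n)*})^{-1}$ yields $(g_k^{(n)})^{*}\eta_k\rightharpoonup 0$ in $Y^*$ for each fixed $n$. Since $w^{(n)}\in X\subset Y$ is canonically identified with an element of $Y^{**}$, this weak convergence forces $\langle (g_k^{(n)})^{*}\eta_k,w^{(n)}\rangle\to 0$ as $k\to\infty$ for each $n$. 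For the tail, I estimate
\[
\Big|\sum_{n\ge N+1}\langle\eta_k,g_k^{(n)}w^{(n)}\rangle\Big|\le\|\eta_k\|_{Y^*}\cdot\Big\|\sum_{n\ge N+1}g_k^{(n)}w^{(n)}\Big\|_{Y}\le C\sup_k\Big\|\sum_{n\ge N+1}g_k^{(n)}w^{(n)}\Big\|_{X},
\]
which tends to zero as $N\to\infty$ uniformly in $k$ thanks to the uniform convergence of the series in $X$ and the continuity of $X\hookrightarrow Y$. Given $\epsilon>0$, I fix $N$ large enough so that the tail is below $\epsilon/3$ uniformly in $k$, then let $k\to\infty$ with $N$ fixed so that the remainder term and the finitely many profile terms are each below $\epsilon/3$; combining with the remainder estimate yields $|\langle\eta_k,x_k\rangle|<\epsilon$ for large $k$, contradicting the lower bound.

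The main conceptual point I expect to have to be careful about is the interpretation of ``weak convergence in $Y^*$'' in Definition \ref{(Gauged-weak-convergence)}: it must be strong enough to allow testing against the profile vectors $w^{(n)}\in X$. This is covered by the canonical embedding $Y\hookrightarrow Y^{**}$, so the obstacle is not substantial and the proof is essentially a clean bookkeeping combining the profile decomposition in $X$, the $D,X$-cocompactness of the imbedding, the adjoint identity, and uniform control of the series tail.
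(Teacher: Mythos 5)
Your proof is correct and follows essentially the same route as the paper's: apply the $\Delta$-profile decomposition in $X$ to a (near-)norming sequence for $\eta_k$, kill each profile term via the adjoint identity and the $D^{\#}$-weak convergence hypothesis, control the tail by the uniform convergence of the series, and dispose of the remainder using $X\stackrel{D,X}{\hookrightarrow}Y$ together with boundedness of $(\eta_k)$ in $Y^{*}$. The only (cosmetic) difference is that the paper works directly with the exact norming conjugates $v_k^{*}\in X$ and estimates $\|v_k\|_{X^{*}}$ outright, whereas you use near-norming elements and a contradiction.
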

\begin{proof}
Consider $(v_{k})$ , $v_{k}\stackrel{D^{\#}}{\rightharpoonup}0$
in $Y^{*}$, as a sequence in $X^{*}$, and let $v_{k}^{*}\in X$
be a dual conjugate of $v_{k}$. Consider a $\Delta$-profile decomposition
for $v_{k}^{*}$ in $X$. Then
\[
\|v_{k}\|_{X^{*}}=\sum_{n}\langle v_{k},g_{k}^{(n)}w^{(n)}\rangle_{X}+\langle v_{k},r_{k}\rangle_{X}\le\sum_{n}\langle g_{k}^{(n)*}v_{k},w^{(n)}\rangle_{X}+\|v_{k}\|_{Y^{*}}\|r_{k}\|_{Y}.
\]
It remains to observe that the sum in the right hand side is uniformly
convergent relative to $k$, and each term vanishes by the assumption
on $v_{k}$. The last term in the right hand side vanishes, since
$v_{k}$ is bounded in $Y^{*}$ and the remainder of profile decomposition
vanishes in $Y$.
\end{proof}
We can now prove Theorem \ref{thm:dual coco}.
\begin{proof}
Note first that condition \ref{eq:*-1} holds for $D_{0}^{\#}$ in
$Y^{*}$. Indeed, if $(g_{k}^{*})^{-1}\not\rightharpoonup0$, then
$\langle v,g_{k}^{-1}u\rangle\not\to0$ for some $u,v\in Y$, and
thus $g_{k}^{-1}\not\rightharpoonup0$ in $Y$, and, by density, $g_{k}^{-1}\not\rightharpoonup0$
in $X$. Then, by \ref{eq:*-1}, on a renamed subsequence, $g_{k}^{-1}\to g^{-1}$
in the strong operator sense in $X$ and, by imbedding, in $Y$. In
particular, $g^{-1}$ is an isometry and so also, by a simple duality
argument, is $(g^{*})^{-1}$. Then, for any $v\in Y^{*}$, ($g_{k}^{*})^{-1}v\rightharpoonup(g^{*})^{-1}v$,
and $\|(g_{k}^{*})^{-1}v\|_{Y^{*}}=\|(g^{*})^{-1}v\|_{Y^{*}}=\|v\|_{Y^{*}}$.
Since by assumption $Y^{*}$ is uniformly convex, we have ($g_{k}^{*})^{-1}\to(g^{*})^{-1}$
in the strong sense. 

It remains now to combine Proposition \ref{prop:dual coco} with Theorem
\ref{thm:main}, taking into account that weak and $\Delta$-convergence
of bounded sequences coincide by the Opial's condition. 
\end{proof}

\section{Profile decompositions: convergence of remainder}

We start this section with the proof of Theorem \ref{thm:main-1}.
\begin{proof}
By the Opial's condition $\Delta$-convergence in the uniformly convex
and uniformly smooth space $X$ is equivalent to the weak convergence
in $X$. Consequently, $D\in\mathcal{I}_{X}$. Moreover, $D$-$\Delta$-convergence
for bounded sequences coincides with $D$-weak convergence. Consequently,
since every bounded sequence in $X$ has a $\Delta$-profile decomposition
by Theorem \ref{thm:main}, it has a profile decomposition in the
sense of Definition \ref{def:pd}. 
\end{proof}
The rest of this section deals with general terms for interpretation
of $D,X$-weak of $D$-weak convergence as convergence in some norm.
In most cases this cannot be the norm of $X$, and verifying convergence
in a suitable weaker norm typically involves some hard analytic proof.
We give one example below where the group $D$ is sufficiently robust
to achieve convergence of $D$-weakly convergent sequences in the
norm of $X$. Our main concern, however, is cocompactness of imbeddings
of spaces of Sobolev type, which are discussed at the end of this
section. 
\begin{thm}
\label{thm:main-2}Let $X$ be a uniformly convex and uniformly smooth
Banach space and let $D$ be a nonempty subset of a dislocation group
$D_{0}$ on $X$. Then every bounded sequence $(u_{k})\subset X$
admits a $\Delta$-profile decomposition and (\ref{eq:Energy}) holds.
Furthermore, if $X$ is $X,D$ -cocompactly imbedded into a Banach
space $Y$, or if $X$ satisfies the Opial's condition and $X$ is
$D$-cocompactly imbedded into $Y$, then the remainder $r_{k}$ converges
to zero in the norm of $Y$.
\end{thm}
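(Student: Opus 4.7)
The first assertion is simply a restatement of Theorem \ref{thm:main}, so the plan is to cite that result verbatim. This yields the $\Delta$-profile decomposition (\ref{eq:ProDec-1}) together with the energy estimate (\ref{eq:Energy}), and in particular produces a remainder $r_{k}$ which is $D$-$\Delta$-convergent to zero in $X$. One should also observe that $r_{k}$ is bounded in $X$: from the construction in the proof of Theorem \ref{thm:main}, $r_{k} = x_{n_{k}^{k}} - \sum_{n=1}^{\infty} g_{n_{k}^{k}}^{(n)} w^{(n)}$, where $(x_{n_{k}^{k}})$ is bounded by hypothesis and the series is uniformly convergent in $k$, so the partial sums form a bounded family.

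For the convergence of the remainder in the norm of $Y$, the plan is to treat the two alternative hypotheses in turn. Under the first, that $X \stackrel{D,X}{\hookrightarrow} Y$, the conclusion is immediate from the definition: $D,X$-cocompactness means precisely that any $D$-$\Delta$-convergent sequence in $X$ is norm-convergent in $Y$, and $r_{k} \stackrel{D}{\rightharpoondown} 0$ in $X$ supplies exactly such a sequence (with limit $0$, so the $Y$-limit must also be $0$ by continuity of the imbedding).

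Under the second alternative, where $X$ satisfies the Opial's condition and $X \stackrel{D}{\hookrightarrow} Y$ in the weak sense of Definition \ref{(Cocompactness)}, the bridge is Theorem \ref{thm:UCOpial}: in a uniformly convex and uniformly smooth Banach space, Opial's condition is equivalent to $\Delta$-convergence and weak convergence coinciding on bounded sequences. Applying this equivalence to each of the sequences $(g_{k}^{-1} r_{k})$ for $(g_{k}) \subset D$, one concludes that $r_{k} \stackrel{D}{\rightharpoondown} 0$ in $X$ is the same as $r_{k} \stackrel{D}{\rightharpoonup} 0$ in $X$. The hypothesis $X \stackrel{D}{\hookrightarrow} Y$ then directly yields $\|r_{k}\|_{Y} \to 0$.

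The steps are essentially bookkeeping once Theorem \ref{thm:main} and Theorem \ref{thm:UCOpial} are in hand; the only minor obstacle is making sure the boundedness of $(r_{k})$ in $X$ is extracted from the construction so that the equivalence of $\Delta$- and weak convergence (which is a statement about bounded sequences via Theorem \ref{thm:newBS}) may be applied in the Opial case. No new analytical input is required.
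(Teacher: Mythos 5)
Your argument is correct and coincides with what the paper intends: the first assertion is a direct citation of Theorem \ref{thm:main}, the $D,X$-cocompact case is the definition applied to $r_{k}\stackrel{D}{\rightharpoondown}0$, and the Opial case reduces $D$-$\Delta$-convergence to $D$-weak convergence via Theorem \ref{thm:UCOpial} (using boundedness of $r_{k}$) exactly as in the paper's proof of Theorem \ref{thm:main-1}. The paper in fact leaves this theorem without an explicit proof, treating it as immediate from Theorem \ref{thm:main} and the definitions, so your write-up supplies precisely the intended bookkeeping.
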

We also have a partial analog of this theorem that imposes some of
the assumptions on $Y$ instead of $X$.
\begin{thm}
\label{thm:main-2-1}Let $X$ be a Banach space densely imbedded into
a uniformly convex and uniformly smooth Banach space $Y$, and let
$D$ be a nonempty subset of a dislocation group $D_{0}$ on $Y$
such that with $D_{0}|_{X}$ is a dislocation group on $X$. $ $Then
every bounded sequence $(u_{k})\subset X$ admits a $\Delta$-profile
decomposition in $Y$ and (\ref{eq:Energy}) holds (in $Y$). Furthermore,
if there are only finitely many profiles $w^{(n)}\neq0$, $Y$ satisfies
the Opial's condition, and $X$ is $D$-cocompactly imbedded into
$Y$, then the remainder $r_{k}$ converges to zero in the norm of
$Y$.\end{thm}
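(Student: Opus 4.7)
The plan is to derive both assertions from Theorem \ref{thm:main} applied in $Y$, using density, Opial's condition and cocompactness to pass from the $Y$-side profile decomposition to a $Y$-norm convergent remainder.

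\textbf{Step 1.} Since $X\hookrightarrow Y$ continuously, $(u_k)$ is bounded in $Y$. Now $Y$ is uniformly convex and uniformly smooth and $D\subset D_0\in\mathcal{I}_Y$, so Theorem \ref{thm:main} applies in $Y$ and furnishes a $\Delta$-profile decomposition of a renamed subsequence, with gauges $(g_k^{(n)})\subset D$ satisfying the asymptotic decoupling (\ref{eq:decouple-1}), profiles $w^{(n)}\in Y$, remainder $r_k\stackrel{D}{\rightharpoondown}0$ in $Y$, and the energy estimate (\ref{eq:Energy}).

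\textbf{Step 2.} Assume further that only $w^{(1)},\dots,w^{(N)}$ are nonzero, that $Y$ satisfies Opial's condition, and that $X\stackrel{D}{\hookrightarrow}Y$. Theorem \ref{thm:UCOpial} identifies weak and $\Delta$-convergence in $Y$, so $r_k\stackrel{D}{\rightharpoonup}0$ in $Y$. Because the profiles live only in $Y$, $r_k$ need not lie in $X$, and cocompactness is not directly applicable as in the proof of Theorem \ref{thm:main-1}. I circumvent this by density: for each $\varepsilon>0$ pick $\tilde w^{(n)}\in X$ with $\|w^{(n)}-\tilde w^{(n)}\|_Y<\varepsilon/N$ and set
\[
\tilde r_k:=u_k-\sum_{n=1}^{N}g_k^{(n)}\tilde w^{(n)}\in X;
\]
then $(\tilde r_k)$ is bounded in $X$ and $\|\tilde r_k-r_k\|_Y<\varepsilon$. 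For any $(h_k)\subset D$, the dislocation property (\ref{eq:*}) applied to $h_k^{-1}g_k^{(n)}\in D_0$ together with the decoupling (\ref{eq:decouple-1}) ensures that, along a suitable subsequence, at most one index $n_0$ yields strong operator convergence $h_k^{-1}g_k^{(n_0)}\to\gamma$, while $h_k^{-1}g_k^{(n)}\rightharpoonup 0$ in $Y$ for every other $n$. Combined with $h_k^{-1}r_k\rightharpoonup 0$ in $Y$, this makes $h_k^{-1}\tilde r_k$ weakly convergent in $Y$ to an element of norm at most $\varepsilon/N$, so $(\tilde r_k)$ is $D$-weakly null in $Y$ up to an $O(\varepsilon)$ error. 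Cocompactness of $X\hookrightarrow Y$, together with a contradiction argument absorbing the $O(\varepsilon)$ deviation (reapplying Theorem \ref{thm:main} to a hypothetical subsequence with $\|\tilde r_k\|_Y\ge c$ and noting that any resulting new profile must itself have norm $\le\varepsilon/N$), yields $\limsup\|\tilde r_k\|_Y\le C\varepsilon$ and hence $\limsup\|r_k\|_Y\le (C+1)\varepsilon$. Since $\varepsilon$ is arbitrary, $r_k\to 0$ in $Y$.

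The principal obstacle is precisely the mismatch that the $\Delta$-profiles produced in $Y$ by Theorem \ref{thm:main} may not belong to $X$, so cocompactness of $X\hookrightarrow Y$ cannot be invoked on $r_k$ itself (as it can in the uniformly convex $X$ case of Theorem \ref{thm:main-1}). The finiteness hypothesis plus density of $X$ in $Y$ opens the door to the approximation argument, while the delicate bookkeeping, that is controlling the error term via the dislocation condition (\ref{eq:*}), the gauge decoupling, and a final limiting contradiction step, is what makes cocompactness applicable in this more general setting.
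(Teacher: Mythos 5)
Your Step 1 coincides with the paper's argument. The divergence, and the gap, is in Step 2. You assert that ``the profiles live only in $Y$'' and therefore build an approximation scheme; but the paper's proof of the second assertion rests precisely on the observation that the profiles \emph{do} lie in $X$: since $Y$ satisfies the Opial's condition, each $w^{(n)}$ is the weak limit in $Y$ of $(g_k^{(n)})^{-1}u_k$, a sequence bounded in $X$, and the weak limit in $Y$ of a sequence bounded in $X$ belongs to $X$. With only finitely many nonzero profiles, $r_k=u_k-\sum_{n\le N}g_k^{(n)}w^{(n)}$ is then bounded in $X$; being $D$-weakly null in $Y$ and bounded in $X$, it is $D$-weakly null in $X$, and $D$-cocompactness of $X\hookrightarrow Y$ applies directly to give $\|r_k\|_Y\to0$. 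That is the entire argument, and it is exactly where the finiteness hypothesis and the boundedness of $(u_k)$ in $X$ are used.

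Your substitute does not close. Cocompactness, as defined, is a purely qualitative implication for sequences that are exactly $D$-weakly null in $X$; it carries no modulus, so knowing that $h_k^{-1}\tilde r_k$ has weak limits in $Y$ of norm at most $\varepsilon$ does not by itself bound $\limsup\|\tilde r_k\|_Y$. The ``contradiction argument absorbing the $O(\varepsilon)$ deviation'' is where this must be repaired, and as sketched it fails: reapplying Theorem \ref{thm:main} to $\tilde r_k$ produces possibly infinitely many new profiles $v^{(m)}$, and even granting that each has small norm (which itself requires an argument, since $\Delta$-limits are not additive and the bound one gets from Lemma \ref{lem:energy} is of order $\delta^{-1}(\varepsilon)$ rather than $\varepsilon$), the energy inequality (\ref{eq:Energy}) controls only $\sum_m\delta(\|v^{(m)}\|)$, not $\sum_m\|v^{(m)}\|$ or the norm of $\sum_m h_k^{(m)}v^{(m)}$, so the concentration part of $\tilde r_k$ is not shown to be $O(\varepsilon)$; moreover the new remainder is again only known to be $D$-$\Delta$-null in $Y$, so to make it small you would need cocompactness for it, which returns you to the original problem. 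Replace Step 2 by the membership observation above.
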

\begin{proof}
Apply Theorem \ref{thm:main}in $Y$. Since $Y$ satisfies the Opial's
condition, all profiles are defined as weak limits in $Y$, and, since
$(x_{k})$ is bounded in $X$, they are elements of $X$. Since there
are only finitely many profiles, the remainder $r_{k}$ is bounded
in $X$. Since $r_{k}$ is bounded in $X$ and $r_{k}\stackrel{D}{\rightharpoonup}0$
in $Y$, we have also $r_{k}\stackrel{D}{\rightharpoonup}0$ in $X$,
and therefore, by $D$-cocompactness of the imbedding, $r_{k}\to0$
in $Y$.
\end{proof}
We now consider Besov and Triebel-Lizorkin spaces equipped with the
group of rescalings $D_{r}$, $r\in\mathbb{R},$ defined as the product
group of Euclidean shifts and, for some and dyadic dilations $g_{r}^{j}u(x)\mapsto2^{rj}u(2^{j}x)$,
$j\in\mathbb{Z}$. We refer to the definition in the book of Triebel
\cite{Triebel} (Definition 2, p.~238, see also a similar exposition
in Adams \& Fournier \cite{Adams}), based on the Littlewood-Paley
decomposition, of equivalent norm for Besov spaces $\dot{B}^{s,p,q}(\R^{N})$
and Triebel-Lizorkin spaces $\dot{F}^{s,p,q}(\R^{N})$. It is shown
by Cwikel, \cite{Cwikel-PD}, that for all $s\in\R$, and $p,q\in10,\infty)$
(i.e. when the corresponding spaces are uniformly convex and uniformly
smooth), the equivalent norm, which remains scale-invariant, satisfies
the Opial's condition. The latter work also gives direct proofs of
cocompactness of some of the imbeddings of Besov and Triebel-Lizorkin
spaces, which were implicitly proved, via wavelet argument, in \cite{BCK}.
We refer the reader to the survey \cite{survey} for explanations
why Assumption 1, verified in \cite{BCK} for Besov and Triebel-Lizorkin
spaces, implies cocompactness. We summarize the imbeddings whose cocompactness
is proved in \cite{BCK} (another proof, based on Littlewood-Paley
decomposition rather than on wavelet decomposition will be given in
a forthcoming paper \cite{Cwikel-PD}).
\begin{thm}
\label{thm:BCK}The following imbeddings are cocompact relative to
rescalings group $D_{N/p-s}$:\end{thm}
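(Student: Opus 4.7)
The plan is to reduce the statement to the cocompactness framework developed in \cite{BCK}, using the Littlewood--Paley based equivalent norms constructed by Cwikel. By Cwikel's result cited above, on $\dot{B}^{s,p,q}(\R^N)$ and $\dot{F}^{s,p,q}(\R^N)$ with $s\in\R$, $p,q\in(1,\infty)$, the Littlewood--Paley equivalent norm is scale-invariant under $D_{N/p-s}$, is uniformly convex and uniformly smooth, and satisfies the Opial's condition. Theorem \ref{thm:UCOpial} then guarantees that on bounded sets $\Delta$-convergence and weak convergence coincide, and therefore $D_{N/p-s}$-weak convergence coincides with $D_{N/p-s}$-$\Delta$-convergence. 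Cocompactness in the sense of Definition \ref{(Cocompactness)} is then equivalent to the statement that any sequence $(u_k)\subset X$ with $u_k\stackrel{D_{N/p-s}}{\rightharpoonup}0$ satisfies $\|u_k\|_Y\to 0$, for each imbedding $X\hookrightarrow Y$ in the list.

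First I would fix a source-target pair $(X,Y)$ from the list, take a bounded sequence $(u_k)\subset X$ with $u_k\stackrel{D_{N/p-s}}{\rightharpoonup}0$, and expand each $u_k$ via its Littlewood--Paley decomposition $u_k=\sum_{j\in\Z}P_j u_k$. This rewrites $\|u_k\|_Y$ as a mixed sequence-Lebesgue norm of the blocks $(2^{js'}P_j u_k)_{j\in\Z}$ for the target indices $(s',p',q')$, and it rewrites $D_{N/p-s}$-weak convergence as the statement that, for every sequence of shifts and dyadic dilations $g_k=g^{j_k,y_k}$, one has $g_k^{-1}u_k\rightharpoonup 0$.

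Second, I would invoke the key estimate underlying \cite{BCK} and \cite{Cwikel-PD}: $D_{N/p-s}$-weak convergence forces, via reproducing identities for the Littlewood--Paley projectors $P_j$, a uniform in $(j,y)$ decay of $2^{j(N/p-s)}|P_j u_k(y)|$. Given this, the block-wise target norm at scale $j$ is bounded by the source block-wise norm times a quantity that vanishes as $k\to\infty$. Summation in $j$ and integration in $y$ (or application of the appropriate square-function in the Triebel--Lizorkin case) is then controlled by a standard H\"older argument once the scaling relation $s_1-N/p_1=s_2-N/p_2$ (and the inequalities on $p_i,q_i$ describing the admissible pairs in the list) are in place; these are precisely the conditions ensuring absolute convergence of the relevant dyadic sum.

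The main obstacle is the uniform block-wise decay step in the second paragraph, i.e.\ upgrading the abstract $D$-weak convergence into pointwise/block-wise smallness of $P_j u_k$. This is the core of \cite{BCK}, carried out there via a Meyer wavelet basis, and reworked in \cite{Cwikel-PD} directly for the Littlewood--Paley decomposition. In the present self-contained account one would transcribe one of these arguments into the Littlewood--Paley framework; because Cwikel's equivalent norm is exactly scale invariant under $D_{N/p-s}$, no reindexing is required and the estimates of \cite{BCK} transfer without essential modification, yielding cocompactness of each imbedding listed.
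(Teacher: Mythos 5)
The paper gives no proof of Theorem \ref{thm:BCK}: it is stated explicitly as a summary of imbeddings whose cocompactness is established in \cite{BCK} (via wavelet decompositions), with an alternative Littlewood--Paley argument deferred to the forthcoming \cite{Cwikel-PD}. Your proposal takes essentially the same route --- it correctly identifies the mechanism (scale-invariant Littlewood--Paley equivalent norms satisfying Opial's condition, reduction of $D_{N/p-s}$-weak vanishing to uniform decay of the dyadic blocks, then H\"older and summation under the scaling relation $s-N/p=t-N/q$) --- but, exactly like the paper, it leaves the essential analytic step to \cite{BCK} and \cite{Cwikel-PD}, so there is no divergence from the paper's treatment to report.
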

\begin{lyxlist}{00.00.0000}
\item [{(i)}] \textbf{$\dot{B}{}^{s,p,q}\hookrightarrow\dot{F}^{t,q,b}$},\textbf{
$\frac{1}{p}-\frac{1}{q}=\frac{s-t}{N}>0$}. 
\item [{(ii)}] \textbf{$\dot{B}{}^{s,p,a}\hookrightarrow\dot{B}^{t,q,b}$},\textbf{
$\frac{1}{p}-\frac{1}{q}=\frac{s-t}{N}\ge0$}, $a<b$. 
\item [{(iii)}] $\dot{B}^{s,p,p}\hookrightarrow\mathrm{BMO}$, $s=\frac{N}{p}>0$. 
\item [{(iv)}] \textbf{$\dot{B}{}^{s,p,a}\hookrightarrow L^{q,b}$},\textbf{
$\frac{1}{p}-\frac{1}{q}=\frac{s}{N}>0$}, $a<b$. 
\item [{(v)}] \textbf{$\dot{F}{}^{s,p,a}\hookrightarrow\dot{F}^{t,q,b}$},\textbf{
$\frac{1}{p}-\frac{1}{q}=\frac{s-t}{N}>0$}, $a,b>1$. 
\item [{(vi)}] \textbf{$\dot{F}{}^{s,p,a}\hookrightarrow\dot{B}^{t,q,p}$},\textbf{
$\frac{1}{p}-\frac{1}{q}=\frac{s-t}{N}>0.$} 
\end{lyxlist}

\section{Appendix A: Uniformly convex and uniformly smooth Banach spaces}
\begin{defn}
\label{def:muc}We recall that a normed vector space $X$ is called
uniformly convex if the following function on $[0,2]$, called the
\emph{modulus of convexity of $X$,} is strictly positive for all
$\epsilon>0$:
\[
\delta(\epsilon)=\inf_{x,y\in X,\,\|x\|=\|y\|=1,\,\|x-y\|=\epsilon}1-\left\Vert \frac{x+y}{2}\right\Vert .
\]

\end{defn}
As shown by Figiel (\cite{Figiel} Proposition 3, p.~122), the function
$\epsilon\mapsto\delta(\epsilon)/\epsilon$ is non-decreasing on $(0,2]$,
and thus $\epsilon\mapsto\delta(\epsilon)$ is strictly increasing
if $\delta(\epsilon)>0$ . Uniform convexity can be equivalently defined
by the property 

\begin{equation}
x,y\in X,\,\left\Vert x\right\Vert \le1,\,\left\Vert y\right\Vert \le1\,\Longrightarrow\,\left\Vert \frac{x+y}{2}\right\Vert \le1-\delta\left(\left\Vert x-y\right\Vert \right)\,,\label{eq:UC}
\end{equation}
(see \cite{Figiel} Lemma 4, p.~124.) 

It is an obvious consequence of (\ref{eq:UC}) that 
\begin{equation}
\left\Vert \frac{u+v}{2}\right\Vert \le\left\Vert v\right\Vert \left(1-\delta\left(\frac{\left\Vert u-v\right\Vert }{\left\Vert v\right\Vert }\right)\right)\label{eq:HyperUC}
\end{equation}
for any two elements $u,v\in X$ which satisfy $\left\Vert u\right\Vert \le\left\Vert v\right\Vert $
and $v\ne0$. This in turn implies that every two elements $u,v\in X$
which are not both zero satisfy
\begin{equation}
\left\Vert \frac{u+v}{2}\right\Vert \le C_{1}-C_{2}\delta\left(\frac{\left\Vert u-v\right\Vert }{C_{2}}\right)\mbox{ for all }C_{1}\mbox{ and }C_{2}\mbox{ in }[\max\left\{ \left\Vert u\right\Vert ,\left\Vert v\right\Vert \right\} ,\infty).\label{eq:UltraSuperUC}
\end{equation}
If $C_{1}=C_{2}=\max\left\{ \left\Vert u\right\Vert ,\left\Vert v\right\Vert \right\} $
then (\ref{eq:UltraSuperUC}) is exactly (\ref{eq:HyperUC}), possibly
with $u$ and $v$ interchanged. To extend this to larger values of
$C_{1}$ and $C_{2}$ we simply use the fact that $t\mapsto t\delta\left(\frac{\left\Vert u-v\right\Vert }{t}\right)$
is a non-increasing function.

A Banach space $X$ is called uniformly smooth if for every $\epsilon>0$
there exists $\delta>0$ such that if $x,y\in X$ with $\|x\|=1$
and $\|y\|\leq\delta$ then $\|x+y\|+\|x-y\|\le2+\epsilon\|y\|$.
It is known that $X^{*}$ is uniformly convex if and only if $X$
is uniformly smooth (\cite{LinTz}, Proposition 1.e.2) and that if
$X$ is uniformly convex, then the norm of $X$, as a function $\phi(x)=\|x\|$,
considered on the unit sphere $S_{1}=\{x\in X,\|x\|=1\}$, is uniformly
Gateau differentiable, which immediately implies that $\phi'$ is
a uniformly continuous function $S_{1}\to S_{1}^{*}$ (\cite{LinTz},
p. 61). Considering $\phi$ as a function on the whole $X$, one has
by homogeneity $\phi'(x)=\phi'(x/\|x\|)\in S_{1}^{*}$ for all $x\neq0$,
and an elementary argument shows that $\phi'(x)$ coincides with the
uniquely defined $x^{*}$. We summarize this characterization of the
duality conjugate as the following statement. 
\begin{lem}
\label{lem:dualconv}Let $X$ be a uniformly convex and uniformly
smooth Banach space. Then the map $x\mapsto x^{*}$ is a continuous
map $X\setminus\{0\}\to X^{*}$ with respect to the norm topologies
on $X$ and $X^{*}$ and is in fact uniformly continuous on all closed
subsets of $X\setminus\{0\}$. 
\end{lem}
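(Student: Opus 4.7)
The plan is to derive the lemma directly from the preceding paragraph, which already records the two key ingredients: first, that uniform smoothness of $X$ makes the norm $\phi(x) = \|x\|$ uniformly Fréchet differentiable on the unit sphere $S_{1}$, so that $\phi'|_{S_{1}} : S_{1} \to S_{1}^{*}$ is uniformly continuous (this is the content of Proposition~1.e.2 and the discussion on p.~61 of \cite{LinTz}); and second, that by homogeneity $x^{*} = \phi'(x) = \phi'(x/\|x\|)$ for every $x \ne 0$. Thus the whole task is to propagate the uniform continuity on $S_{1}$ to closed subsets of $X \setminus \{0\}$ via the radial projection $\pi(x) := x/\|x\|$.

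First I would establish the elementary fact that for each $\epsilon > 0$, $\pi$ is Lipschitz on the closed half-space $E_{\epsilon} := \{x \in X : \|x\| \ge \epsilon\}$ with constant $2/\epsilon$. Indeed, for $x,y \in E_{\epsilon}$,
\[
\bigl\|\pi(x) - \pi(y)\bigr\| \le \frac{\|x - y\|}{\|x\|} + \frac{\bigl|\,\|x\| - \|y\|\,\bigr|}{\|x\|} \le \frac{2\,\|x - y\|}{\epsilon}.
\]
Composing the Lipschitz map $\pi : E_{\epsilon} \to S_{1}$ with the uniformly continuous map $\phi'|_{S_{1}} : S_{1} \to S_{1}^{*}$ yields that $x \mapsto x^{*}$ is uniformly continuous on $E_{\epsilon}$ for every $\epsilon > 0$.

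Next, I would note that any set $A$ closed in $X$ with $A \subset X \setminus \{0\}$ satisfies $d(0, A) > 0$: otherwise there would exist $a_{n} \in A$ with $a_{n} \to 0$, whence $0 \in A$ by closedness, a contradiction. Hence $A \subset E_{\epsilon}$ for $\epsilon := d(0,A)/2$, and the uniform continuity of $x \mapsto x^{*}$ on $A$ is inherited from $E_{\epsilon}$. Pointwise continuity at an arbitrary $x_{0} \in X \setminus \{0\}$ follows at once by taking the neighborhood $\{x : \|x\| > \|x_{0}\|/2\} \subset E_{\|x_{0}\|/2}$ on which the map is already known to be uniformly continuous.

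There is essentially no hard step: the nontrivial analytic content, namely the equivalence between uniform smoothness of $X$ and the uniform Fréchet differentiability of its norm (together with the identification of the Fréchet derivative with the duality map), is imported from \cite{LinTz} and summarized in the paragraph immediately preceding the lemma. The only thing I would make sure to handle cleanly is the passage from closed subsets of $X \setminus \{0\}$ to sets at a strictly positive distance from the origin, which is where the closedness (as opposed to merely $0 \notin A$) is used.
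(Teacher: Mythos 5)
Your proposal is correct and follows essentially the same route as the paper, which states the lemma as a summary of the immediately preceding paragraph (uniform differentiability of the norm on $S_{1}$ from \cite{LinTz}, plus the homogeneity identity $x^{*}=\phi'(x/\|x\|)$) without spelling out the reduction to closed subsets of $X\setminus\{0\}$. Your Lipschitz estimate for the radial projection on $\{\|x\|\ge\epsilon\}$ and the observation that a closed set avoiding the origin has positive distance from it supply exactly the details the paper leaves implicit.
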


\section{Appendix B: asymptotic centers and $\Delta$-convergence}

We follow the presentation of the Chebyshev and asymptotic centers
from Edelstein \cite{Edelstein}, in restriction to a particular case:
the objects in \cite{Edelstein} are defined there relative to a subset
$C$ of a Banach space $X$, and here we consider only the case $C=X$.
We follow the presentation of $\Delta$-convergence from Lim \cite{Lim}.

A bounded set $A$ in a Banach space $X$ can be assigned a positive
number

\[
R_{A}=\inf_{y\in X}\sup_{x\in A}\|x-y\|,
\]
called the Chebyshev radius of $A$. The Chebyshev radius is attained
(and is therefore a minimum) by weak lower semicontinuity of the norm
and the corresponding minimizer is called {\large{}the} Chebyshev
center of $A$. When $X$ is uniformly convex, the value $R_{A}$
cannot be attained at two different points $y'\neq y''$, since from
uniform convexity one immediately has $\sup_{x\in A}\|x-\frac{y'+y''}{2}\|<R_{A}$.
Consequently, the Chebyshev center of any set in a uniformly convex
space is unique. Theorem 1 in \cite{Edelstein} gives the following.
\begin{thm}
\label{thm:Edelstein}Let $X$ be a uniformly convex Banach space
and let $(x_{n})$ be a bounded sequence in $X$. Then the sequence
of Chebyshev centers $(y_{N})$ of the sets $A_{N}=(x){}_{k\ge N}$,
converges in norm. 
\end{thm}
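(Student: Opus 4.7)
The plan is to show that the Chebyshev radii $r_N := R_{A_N}$ converge, and then exploit uniform convexity to show the centers $(y_N)$ form a Cauchy sequence. Since $A_{N+1}\subset A_N$, we have $r_{N+1}\le r_N$, so $r_N$ decreases to some $r_\infty\ge 0$; note also that existence and uniqueness of each $y_N$ has already been observed in the paragraph preceding the theorem.

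First, I would dispose of the degenerate case $r_\infty=0$ directly: for $k,m\ge N$ the triangle inequality gives $\|x_k-x_m\|\le\|x_k-y_N\|+\|x_m-y_N\|\le 2r_N$, so $(x_n)$ is itself Cauchy and converges in norm to some $x_\infty$. Then $\|y_N-x_\infty\|\le r_N+\sup_{k\ge N}\|x_k-x_\infty\|\to 0$.

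The main case is $r_\infty>0$. Fix indices $M\le N$ with both large. For every $k\ge N$, the vectors $u=x_k-y_M$ and $v=x_k-y_N$ satisfy $\|u\|\le r_M$ and $\|v\|\le r_N\le r_M$, so inequality (\ref{eq:UltraSuperUC}) applied with $C_1=C_2=r_M$ gives
\[
\left\|x_k-\tfrac{1}{2}(y_M+y_N)\right\|\le r_M-r_M\,\delta\!\left(\frac{\|y_M-y_N\|}{r_M}\right).
\]
Taking the supremum over $k\ge N$ and using that $(y_M+y_N)/2$ is a candidate center for $A_N$, one obtains $r_N\le r_M-r_M\,\delta(\|y_M-y_N\|/r_M)$, i.e.
\[
\delta\!\left(\frac{\|y_M-y_N\|}{r_M}\right)\le\frac{r_M-r_N}{r_M}.
\]
The right side tends to $0$ as $M\to\infty$ while $r_M\ge r_\infty>0$; by Figiel's result (quoted after Definition \ref{def:muc}) $\delta$ is strictly increasing and vanishes at $0$, so $\|y_M-y_N\|\to 0$. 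Hence $(y_N)$ is Cauchy in the Banach space $X$ and converges in norm.

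The hard part is really just spotting the correct application of uniform convexity: both $y_M$ and $y_N$ must be tested against a common tail (that of $A_N$), and the midpoint is compared via (\ref{eq:UltraSuperUC}) against the radius of the larger of the two, namely $r_M$. Once this is arranged the Cauchy estimate is immediate, and the usual difficulty in such arguments—what happens when $r_M$ is very small—is handled cleanly by separating off the case $r_\infty=0$.
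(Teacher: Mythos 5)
Your proof is correct. Note that the paper itself does not prove this statement: it is quoted as Theorem~1 of Edelstein's paper \cite{Edelstein}, with only the existence and uniqueness of Chebyshev centers argued in the surrounding text. Your argument supplies a complete, self-contained proof, and it is essentially Edelstein's original one: monotonicity of the radii $r_N$, uniform convexity applied to the midpoint $\tfrac{1}{2}(y_M+y_N)$ tested against the common tail $A_N$, and the resulting estimate $\delta\bigl(\|y_M-y_N\|/r_M\bigr)\le (r_M-r_N)/r_M$ forcing the Cauchy property. All the small points check out: $\|x_k-y_M\|\le r_M$ for $k\ge N\ge M$ because $A_N\subset A_M$ and $y_M$ attains the radius; the quotient $\|y_M-y_N\|/r_M$ automatically lies in $[0,2]$ so $\delta$ is defined there; the midpoint is an admissible competitor in the infimum defining $r_N$; Figiel's monotonicity of $\delta(\epsilon)/\epsilon$ quoted after Definition~\ref{def:muc} yields that $\delta$ is non-decreasing and positive away from $0$, which is exactly what is needed to conclude $\|y_M-y_N\|\to0$ from $\delta(\|y_M-y_N\|/r_M)\to0$ while $r_M$ stays in $[r_\infty,r_1]$; and the degenerate case $r_\infty=0$ is handled separately and correctly. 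The only cosmetic remark is that invoking (\ref{eq:UltraSuperUC}) with $C_1=C_2=r_M$ is just (\ref{eq:HyperUC}) up to relabeling, so the appeal to the more general inequality is slightly heavier machinery than needed, but this does not affect correctness.
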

By definition of the Chebyshev center of $A_{N}$, $N\in\N$, we have
$\sup_{k\ge N}\|x_{k}-y_{N}\|\le\sup_{k\ge N}\|x_{k}-y\|$ for all
$y\in X$, and the asymptotic center $y_{0}$ of the sequence $(x_{n})$
satisfies 
\begin{equation}
\limsup\|x_{n}-y_{0}\|\le\limsup\|x_{n}-y\|.\label{eq:ap}
\end{equation}
 From uniform convexity it follows immediately that
\begin{equation}
\limsup\|x_{n}-y_{0}\|<\limsup\|x_{n}-y\|,\; y\neq y_{0},\label{eq:assc}
\end{equation}
so the asymptotic center in a uniformly convex space is unique. An
equivalent definition of $\Delta$-limit in \cite[(2)]{Lim} says
that $y_{0}$ is a $\Delta$-limit of $(x_{n})$ if relation (\ref{eq:ap})
holds for every subsequence of $(x_{n})$. In particular, if a sequence
is $\Delta$-convergent, its $\Delta$-limit is also its asymptotic
center. On the other hand, an asymptotic center is not necessarily
the $\Delta$-limit. If, for example, $(x_{n})$ is an alternating
sequence of two points $a$ and $b$, its asymptotic center is $\frac{a+b}{2}$,
which is not a $\Delta$-limit limit of the sequence. 

The property of a space that every bounded sequence has an asymptotic
center is called in \cite{Lim} \emph{$\Delta$-completeness, }so
by \cite{Edelstein} , uniformly convex spaces are $\Delta$-complete.
Sequential $\Delta$-compactness follows from existence of a \emph{regular}
subsequence, i.e. a subsequence whose any further subsequence has
the same asymptotic radius. This is the content of \cite[ Lemma 15.2]{GK},
whose proof we reproduce below. Note that, unlike the proof of $\Delta$-compactness
in \cite{Lim}, no use is made of the Axiom of Choice.
\begin{proof}
Let $(x_{k})_{k\in\mathbb{N}}\subset X$ be a bounded sequence. We
use the notation $(v_{n})\prec(u_{n})$ to indicate that $(v_{n})$
is a subsequence of $(u_{n})$ and denote asymptotic radius of a sequence
$(v_{n})$ by $\rad n(v_{n})$. Set
\[
r_{0}=\inf\{\rad n(v_{n}):(v_{n})\prec(x_{n})\}.
\]
Select $(v_{n}^{1})\prec(x_{n})$ such that 
\[
\rad n(v_{n}^{1})<r_{0}+1
\]
 and let 
\[
r_{1}=\inf\{\rad n(v_{n}):(v_{n})\prec(v_{n}^{1})\}.
\]
Continuing by induction, and having defined $(v_{n}^{i})\prec(v_{n}^{i-1})$
set
\[
r_{i}=\inf\{\rad n(v_{n}):(v_{n})\prec(v_{n}^{i})\}
\]
and select $(v_{n}^{i+1})\prec(v_{n}^{i})$ so that 
\begin{equation}
\rad n(v_{n}^{i+1})<r_{i}+1/2^{i+1}.\label{eq:i+1}
\end{equation}
Note that $r_{0}\le r_{1}\le\dots$ so that $\lim_{i\to\infty}\rad n(v_{n}^{i})=r:=\lim r_{i}$.

Consider a diagonal sequence $(v_{k}^{k})$. Since $(v_{k}^{k})\prec(v_{k}^{i+1})$,
we have $\rad k(v_{k}^{k})\ge r$, while from (\ref{eq:i+1}) it follows
that $\rad k(v_{k}^{k})\le r$. Then $\rad k(v_{k}^{k})=r$, and since
the same argument applies to every subsequence of $(v_{k}^{k})$,
the sequence $(v_{k}^{k})$ is regular. 
\end{proof}

\section{Appendix C: an equivalent condition to \textup{(\ref{eq:*-1})}}

Condition (\ref{eq:*-1}), while it is verified in a great number
of applications, has a quite technical appearance. While we cannot
remedy this, we would like in this appendix to give it an equivalent
formulation. We will use the notation $\stackrel{s}{\to}$ for the
strong operator convergence.
\begin{prop}
Let $X$ be a uniformly convex separable Banach space and let $D_{0}$
be a group of isometries on $X$. \textup{Then condition (\ref{eq:*-1})
is equivalent to the following condition:}
\end{prop}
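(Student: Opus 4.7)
The plan is to establish the equivalence by proving both implications, with the forward direction amounting to bookkeeping and the reverse direction exploiting the isometry structure to upgrade partial information into the symmetric statement of (\ref{eq:*-1}).

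For the forward direction, I assume (\ref{eq:*-1}) and take $(g_k) \subset D_0$ with $g_k \not\rightharpoonup 0$. The hypothesis furnishes a subsequence along which both $g_{k_j}$ and $g_{k_j}^{-1}$ converge strongly to operators $g$ and $h$. Each is a pointwise limit of isometries and hence itself an isometry; passing to the pointwise limit in $g_{k_j} g_{k_j}^{-1} = g_{k_j}^{-1} g_{k_j} = I$ gives $gh = hg = I$, so $g$ is a bijective isometry with inverse $h$. This produces whatever strong-limit object the equivalent condition demands.

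For the reverse direction, the essential content is that strong convergence of $(g_{k_j})$ to a bijective isometry $g$ alone automatically forces $g_{k_j}^{-1} \stackrel{s}{\to} g^{-1}$. Given $y \in X$, set $x = g^{-1}(y)$; using that $g_{k_j}$ is an isometry,
\[
\| g_{k_j}^{-1}(y) - g^{-1}(y) \| = \| g_{k_j}^{-1}(y) - x \| = \bigl\| g_{k_j}\bigl(g_{k_j}^{-1}(y) - x\bigr) \bigr\| = \| y - g_{k_j}(x) \| \to \| y - g(x) \| = 0.
\]
Thus both $(g_{k_j})$ and the inverse subsequence converge strongly, which is exactly (\ref{eq:*-1}).

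The main obstacle, which I expect to absorb most of the work, is verifying that the strong limit $g$ genuinely lies in $D_0$ (if the equivalent condition requires this), since for a generic group of isometries such closure is not automatic. I would combine the separability of $X$ with a diagonal extraction over a countable dense subset to produce a candidate weak operator limit, then invoke the Kadec property (available because $X$ is uniformly convex) together with the norm-preservation forced by $g g^{-1} = I$ to upgrade weak to strong operator convergence. Membership $g \in D_0$ would then have to be read off the group structure, for instance by expressing $g$ as a composition of limits of elements of $D_0$ and applying whatever closure hypothesis is built into the equivalent condition.
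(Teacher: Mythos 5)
There is a genuine gap, and it is structural: your argument never engages with the condition that the proposition actually asserts to be equivalent to (\ref{eq:*-1}). That condition is (\ref{newii}): if $(g_{k})\subset D_{0}$, $g_{k}\not\rightharpoonup0$ and $u_{k}\rightharpoonup0$, then $g_{k}u_{k}\rightharpoonup0$ on a subsequence. Your ``forward direction'' only establishes that the two strong limits furnished by (\ref{eq:*-1}) are mutually inverse isometries, and then defers to ``whatever strong-limit object the equivalent condition demands''; but (\ref{newii}) demands no strong-limit object at all --- it is a statement about weak convergence of $g_{k}u_{k}$, and deducing it from (\ref{eq:*-1}) requires an actual argument (e.g.\ writing $\langle v,g_{k_{j}}u_{k_{j}}\rangle=\langle g_{k_{j}}^{*}v,u_{k_{j}}\rangle$ and controlling the adjoints). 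Your ``reverse direction'' proves something different and much weaker, namely that strong convergence of $(g_{k_{j}})$ to a surjective isometry forces strong convergence of $(g_{k_{j}}^{-1})$; that computation is correct and is indeed a step the paper leaves implicit, but it is not the reverse implication. The substance of sufficiency is to start from the purely weak hypothesis (\ref{newii}) and \emph{produce} a strongly convergent subsequence of $(g_{k})$, and nothing in your proposal does this.

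For comparison, the paper's sufficiency argument runs as follows: separability (via a normalized basis and diagonalization) and the bound $\|g_{k}e_{n}\|=1$ yield a subsequence with a weak operator limit $g$, which is nonzero by hypothesis; then Lemma \ref{prop:XDisl} uses condition (\ref{newii}) itself to show $g_{k}^{*}\stackrel{s}{\to}g^{*}$, and Lemma \ref{prop:verif} combines this with the isometry identity $\|g_{k}u\|=\|u\|=\|gu\|$ and uniform convexity (the Radon--Riesz/Kadec property) to upgrade $g_{k}u\rightharpoonup gu$ to $g_{k}u\to gu$ (Lemma \ref{prop:4.5}). Uniform convexity and separability are thus used in an essential way that your sketch does not capture. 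Finally, your closing worry about whether the limit $g$ lies in $D_{0}$ is misdirected: condition (\ref{eq:*-1}) asks only for strong convergence of $(g_{k_{j}})$ and $(g_{k_{j}}^{-1})$, not for membership of the limit in the group.
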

\begin{equation}
(g_{k})\subset D_{0},\; g_{k}\not\rightharpoonup0,\; u_{k}\rightharpoonup0\Rightarrow g_{k}u_{k}\rightharpoonup0\text{ on a subsequence}.\label{newii}
\end{equation}

\begin{lem}
\label{prop:XDisl}If $(g_{k})\subset D_{0}$ , $g_{k}\rightharpoonup g\neq0$,
is such that
\[
u_{k}\rightharpoonup0\Rightarrow g_{k}u_{k}\rightharpoonup0
\]
 then $g_{k}^{*}\stackrel{s}{\to}g^{*}$.\end{lem}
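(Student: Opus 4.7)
The goal is to show that $g_k^{\ast}v \to g^{\ast}v$ in norm for each fixed $v \in X^{\ast}$. The weak operator convergence $g_k \rightharpoonup g$ already gives weak-$\ast$ convergence $g_k^{\ast}v \stackrel{\ast}{\rightharpoonup} g^{\ast}v$ in $X^{\ast}$, and since $X$ is reflexive (being uniformly convex), this is genuine weak convergence in $X^{\ast}$. So the task is to upgrade weak to strong convergence of $(g_k^{\ast}v)$. Note that each $g_k^{\ast}$ is an isometry, so $\|g_k^{\ast}v\|=\|v\|=\|g^{\ast}v\|$; however, since $X^{\ast}$ is not assumed to be uniformly convex, I cannot invoke the Kadec property in $X^{\ast}$ directly.

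Instead, I would argue by contradiction. If $\|g_k^{\ast}v - g^{\ast}v\| \not\to 0$ for some $v \in X^{\ast}$, then, after extracting a subsequence, there exist $\varepsilon>0$ and $u_k \in X$ with $\|u_k\|\le 1$ such that $|\langle v,(g_k-g)u_k\rangle| \ge \varepsilon$. By reflexivity of $X$ and sequential weak compactness of its unit ball, a further subsequence satisfies $u_k \rightharpoonup u$ for some $u \in X$. The identity
\[
\langle v,(g_k-g)u_k\rangle \;=\; \langle v, g_k(u_k-u)\rangle \,+\, \langle v, g_k u\rangle \,-\, \langle v, g u_k\rangle
\]
then lets me pass to the limit term by term: the second term tends to $\langle v, gu\rangle$ by $g_k \rightharpoonup g$; the third also tends to $\langle v, gu\rangle$ by weak continuity of $g$ and $u_k \rightharpoonup u$; and the first term tends to $0$ because $u_k - u \rightharpoonup 0$ and the standing hypothesis forces $g_k(u_k-u) \rightharpoonup 0$. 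The three contributions cancel, contradicting $|\cdot|\ge \varepsilon$.

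The step I expect to be the main technical obstacle is justifying the application of the hypothesis $w_k \rightharpoonup 0 \Rightarrow g_k w_k \rightharpoonup 0$ along a subsequence of the original index set rather than along the full sequence. I would handle this by a zero-extension trick: given a subsequence $(k_j)$ and $w_j = u_{k_j} - u \rightharpoonup 0$, define $\tilde w_k := w_j$ if $k = k_j$ and $\tilde w_k := 0$ otherwise. A routine coordinatewise check shows $\tilde w_k \rightharpoonup 0$ on the full index set, so the hypothesis yields $g_k \tilde w_k \rightharpoonup 0$, and restriction to the indices $k = k_j$ gives $g_{k_j}(u_{k_j}-u) \rightharpoonup 0$, as needed. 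With this lemma in hand the contradiction argument closes and the strong operator convergence $g_k^{\ast} \stackrel{s}{\to} g^{\ast}$ follows.
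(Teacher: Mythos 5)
Your proof is correct and follows essentially the same route as the paper: reduce strong convergence of $g_k^{*}v$ to testing against a bounded (hence, after extraction, weakly convergent) sequence $u_k$, and split $\langle v,(g_k-g)u_k\rangle$ into the term $\langle v,g_k(u_k-u)\rangle$, killed by the hypothesis applied to $u_k-u\rightharpoonup0$, plus terms handled by weak operator convergence of $g_k$ and weak convergence of $u_k$. The only difference is cosmetic (contradiction framing) plus your zero-extension argument justifying the use of the hypothesis along a subsequence, a point the paper's proof passes over silently.
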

\begin{proof}
Let $v\in X^{*}$. We will verify that $g_{k}^{*}\stackrel{s}{\to}g^{*}$
once we show that for every bounded sequence $(u_{k})$, 
\[
\langle g_{k}^{*}v-g^{*}v,u_{k}\rangle\to0.
\]
Without loss of generality assume that $u_{k}\rightharpoonup u$.
Then

\[
\langle g_{k}^{*}v-g^{*}v,u_{k}\rangle=\langle g_{k}^{*}v-g^{*}v,u\rangle+\langle v,g_{k}(u_{k}-u)\rangle+\langle g^{*}v,(u_{k}-u)\rangle\to0,
\]
with the middle term vanishing by assumption and the remaining two
vanishing by the weak convergence.\end{proof}
\begin{lem}
\label{prop:verif}If $(g_{k})\subset D_{0}$, $g_{k}\rightharpoonup g\neq0$,
is such that
\[
u_{k}\rightharpoonup0\text{ in }X\Rightarrow g_{k}u_{k}\rightharpoonup0\text{ on a subsequence,}
\]
then
\[
v_{k}\rightharpoonup0\text{ in }X^{*}\Rightarrow g_{k}^{*}v_{k}\rightharpoonup0\text{ on a subsequence.}
\]
\end{lem}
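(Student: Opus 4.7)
The approach is to strengthen the conclusion of Lemma \ref{prop:XDisl} so that it holds under the weaker hypothesis of Lemma \ref{prop:verif} (where the implication is valid only on a subsequence), then use this to deduce norm-convergence $g_{k}u\to gu$ for each $u\in X$, and finally pair this with $v_{k}\rightharpoonup 0$.

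\emph{Step 1 (strong convergence of adjoints).} I first claim that under the hypothesis of Lemma \ref{prop:verif} one actually has $g_{k}^{*}v\to g^{*}v$ in norm for every fixed $v\in X^{*}$. Fix $v$ and suppose for contradiction that $\|g_{k}^{*}v-g^{*}v\|\not\to 0$. After passing to a subsequence, choose unit vectors $u_{k}\in X$ with $\langle g_{k}^{*}v-g^{*}v,u_{k}\rangle\ge\eta>0$; by reflexivity of $X$ extract a further subsequence with $u_{k}\rightharpoonup u$, and finally apply the hypothesis of Lemma \ref{prop:verif} to the weakly null sequence $(u_{k}-u)$ to obtain yet another subsequence along which $g_{k}(u_{k}-u)\rightharpoonup 0$. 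On this last subsequence the identity
\[
\langle g_{k}^{*}v-g^{*}v,u_{k}\rangle=\langle v,g_{k}(u_{k}-u)\rangle+\langle v,g_{k}u\rangle-\langle g^{*}v,u_{k}\rangle
\]
tends to $0+\langle v,gu\rangle-\langle v,gu\rangle=0$, contradicting the lower bound $\eta$.

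\emph{Step 2 ($g$ is a surjective isometry).} Because $g^{*}$ is the pointwise strong limit of the isometries $g_{k}^{*}$, it satisfies $\|g^{*}v\|=\|v\|$ for every $v\in X^{*}$, so $g^{*}$ is a linear isometry on $X^{*}$. Since $D_{0}$ is a group, $g_{k}^{-1}\in D_{0}$; by Lemma \ref{lem:w-1} the sequence $(g_{k}^{-1})$ does not converge weakly to zero, so after a further subsequence $g_{k}^{-1}\rightharpoonup h$ with $h\ne 0$. Repeating the argument of Step 1 for $(g_{k}^{-1})$, using that the hypothesis (\ref{newii}) in the ambient proposition applies to every sequence in $D_{0}$, one obtains $(g_{k}^{-1})^{*}=(g_{k}^{*})^{-1}\to h^{*}$ strongly. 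Passing to the limit in $g_{k}^{*}(g_{k}^{*})^{-1}=I$ gives $g^{*}h^{*}=I$, so $g^{*}$ is bijective with inverse $h^{*}$; by reflexivity of $X$ and standard duality this forces $g$ itself to be a surjective linear isometry on $X$.

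\emph{Step 3 (conclude).} With $g$ a surjective isometry we have, for each $u\in X$, both $g_{k}u\rightharpoonup gu$ and $\|g_{k}u\|=\|u\|=\|gu\|$; by the Kadec property of the uniformly convex space $X$ this implies $g_{k}u\to gu$ in norm. Now given $v_{k}\rightharpoonup 0$ in $X^{*}$ and any $u\in X$, write
\[
\langle g_{k}^{*}v_{k},u\rangle=\langle v_{k},gu\rangle+\langle v_{k},g_{k}u-gu\rangle;
\]
the first term tends to $0$ because $v_{k}\rightharpoonup 0$ and $gu\in X$ is fixed, while the second is bounded in absolute value by $\|v_{k}\|\,\|g_{k}u-gu\|\to 0$ since $(v_{k})$ is bounded and $g_{k}u\to gu$ in norm. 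Hence $\langle g_{k}^{*}v_{k},u\rangle\to 0$ for every $u$, i.e.\ $g_{k}^{*}v_{k}\rightharpoonup 0$ in $X^{*}$; in fact no further subsequence of $(k)$ is actually needed.

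The main obstacle is Step 2: one must pass to a common subsequence along which both $g_{k}$ and $g_{k}^{-1}$ converge weakly to nonzero operators, and arrange that the hypothesis (\ref{newii}) genuinely applies to $(g_{k}^{-1})$ as well, which is legitimate in the proposition's setting but uses more than the bare hypothesis stated in Lemma \ref{prop:verif} alone. Once that is done, the group-theoretic relation $g_{k}^{*}(g_{k}^{*})^{-1}=I$ upgrades $g^{*}$ from an isometry to a bijective isometry, and correspondingly $g$ to a surjective isometry; this is the crucial ingredient that makes the Kadec-property argument in Step 3 close.
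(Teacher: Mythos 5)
Your argument is correct in the setting where this lemma is actually used (the sufficiency half of the ambient proposition, where condition (\ref{newii}) is assumed for \emph{all} sequences in $D_{0}$), and its skeleton is the same as the paper's: show $g_{k}^{*}v\to g^{*}v$ in norm (your Step 1 is the argument of Lemma \ref{prop:XDisl}, carefully adapted to the ``on a subsequence'' form of the hypothesis), deduce $\|gu\|=\|u\|$, upgrade $g_{k}u\rightharpoonup gu$ to norm convergence by the Kadec property, and pair with $v_{k}\rightharpoonup0$. The genuine divergence is your Step 2. The paper simply asserts that ``$g^{*}$ is necessarily a bijective isometry'' and from this obtains $\|gu\|=\|u\|$; you instead \emph{prove} surjectivity of $g^{*}$ by passing to $(g_{k}^{-1})$, extracting a nonzero weak operator limit $h$, rerunning Step 1 to get $(g_{k}^{*})^{-1}\to h^{*}$ strongly, and taking limits in $g_{k}^{*}(g_{k}^{*})^{-1}=I$. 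This extra work is not pedantry: a strong operator limit of bijective isometries need not be surjective, and in fact the lemma is false under its literal hypotheses. Take $X=\ell^{2}(\mathbb{N})$, $D_{0}$ the group of permutation unitaries, $U_{k}$ the cyclic permutation $e_{n}\mapsto e_{n+1}$ for $n\le k$, $e_{k+1}\mapsto e_{1}$, and $g_{k}=U_{k}^{*}$; then $g_{k}\rightharpoonup S^{*}\neq0$ and $u_{k}\rightharpoonup0$ implies $g_{k}u_{k}\rightharpoonup0$ (since $g_{k}^{*}=U_{k}\to S$ strongly), yet $v_{k}=e_{k+1}\rightharpoonup0$ gives $g_{k}^{*}v_{k}=e_{1}$ for every $k$, and here $g^{*}=S$ is a non-surjective isometry. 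So some appeal to the group structure and to (\ref{newii}) for the sequence $(g_{k}^{-1})$ --- exactly what you supply and honestly flag --- is indispensable, and your version closes a gap that the paper leaves open. Two small points you should still make explicit: the weak operator limit $h$ of $(g_{k}^{-1})$ is extracted by the same basis-plus-diagonalization device used in the proposition's sufficiency proof (this is where separability of $X$ enters), and $h\neq0$ follows from Lemma \ref{lem:w-1} applied to the sequence $(g_{k}^{-1})$, whose inverses form the sequence $(g_{k})\not\rightharpoonup0$.
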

\begin{proof}
By Lemma \ref{prop:XDisl} we have $g_{k}^{*}\stackrel{s}{\rightharpoonup}g^{*}$
and $g^{*}$ is necessarily a bijective isometry. This implies $g_{k}^{*}\rightharpoonup g^{*}$
and therefore $g_{k}\rightharpoonup g$, since $\langle v,(g_{k}-g)u\rangle=\langle(g_{k}^{*}-g^{*})v,u\rangle$.
At the same time, $\|g_{k}u\|=\|u\|=\|gu\|$, and thus, due to the
uniform convexity, $g_{k}u\to gu$, i.e. $\mbox{\ensuremath{g_{k}\stackrel{s}{\to}}g}$.
\end{proof}
Combining Lemma \ref{prop:XDisl} and Lemma \ref{prop:verif}, we
have the following statement.
\begin{lem}
\label{prop:4.5}If $(g_{k})\subset D_{0}$, $g_{k}\rightharpoonup g\neq0$,
is such that
\[
u_{k}\rightharpoonup0\Rightarrow g_{k}u_{k}\rightharpoonup0\text{ on a subsequence,}
\]
 then $g_{k}\stackrel{s}{\to}g$.
\end{lem}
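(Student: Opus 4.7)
The plan is to observe that the desired conclusion is already essentially contained in the proof of Lemma \ref{prop:verif}. Unpacking that proof: the hypothesis of the present lemma permits invoking Lemma \ref{prop:XDisl} (after a cosmetic sharpening discussed below) to obtain $g_k^*\stackrel{s}{\to} g^*$; from this one deduces $g_k\rightharpoonup g$ via the adjoint pairing; and then the Kadec--Klee property of the uniformly convex space $X$ upgrades this to the strong operator convergence $g_k\stackrel{s}{\to} g$.

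In more detail, first I would observe that although Lemma \ref{prop:XDisl} is stated with a hypothesis of the form ``$u_k\rightharpoonup 0\Rightarrow g_ku_k\rightharpoonup 0$'' (without passing to a subsequence), the present hypothesis already implies this stronger version. Indeed, for a fixed weakly null sequence $(u_k)$ and a fixed $x^*\in X^*$, every subsequence of the real sequence $\langle x^*,g_ku_k\rangle$ is, by applying the hypothesis to the corresponding subsequence of $(u_k)$, further subsequenceable to one tending to $0$; by the standard Urysohn subsequence principle for real numbers the full sequence $\langle x^*,g_ku_k\rangle$ therefore tends to $0$, so $g_ku_k\rightharpoonup 0$ along the original indexing.

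With this strengthened hypothesis, Lemma \ref{prop:XDisl} applies and gives $g_k^*\stackrel{s}{\to} g^*$, hence in particular $g_k^*\rightharpoonup g^*$. The identity $\langle v,g_ku\rangle=\langle g_k^*v,u\rangle$ then gives $g_k\rightharpoonup g$ pointwise on $X$. For any fixed $u\in X$ the sequence $(g_ku)$ satisfies $g_ku\rightharpoonup gu$ and $\|g_ku\|=\|u\|=\|gu\|$ (since each $g_k$ and $g$ is an isometry), so by the Kadec--Klee property of the uniformly convex space $X$ we conclude $g_ku\to gu$ in norm. As $u\in X$ was arbitrary, this yields $g_k\stackrel{s}{\to} g$.

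There is no substantive obstacle in this argument beyond the Urysohn bridge between the ``on a subsequence'' form of the hypothesis and the stronger form directly demanded by Lemma \ref{prop:XDisl}; the real content has been carried by that lemma (strong convergence of the adjoints) and by Lemma \ref{prop:verif} (whose proof already executes the Kadec--Klee upgrade).
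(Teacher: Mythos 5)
Your proof is correct and follows essentially the same route as the paper: the paper's own proof of this lemma is literally the one line ``combine Lemma \ref{prop:XDisl} and Lemma \ref{prop:verif}'', and the substance you describe --- strong convergence of the adjoints from Lemma \ref{prop:XDisl}, the deduction $g_{k}\rightharpoonup g$, and the Kadec--Klee upgrade using $\|g_{k}u\|=\|u\|=\|gu\|$ --- is exactly what is carried out inside the proof of Lemma \ref{prop:verif}. The one point worth flagging is that your Urysohn bridge tacitly requires the hypothesis to hold for the subsequence $(g_{k_{j}})$ as well (applying it to $(u_{k_{j}})_{j}$ alone would pair these vectors with $g_{j}$ rather than $g_{k_{j}}$); this heredity is not automatic for a single fixed sequence, but it is available in the only place the lemma is applied, since condition (\ref{newii}) is quantified over all sequences in $D_{0}$, and the paper's own one-line proof passes over this same issue in silence.
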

We can now prove the proposition.
\begin{proof}
\emph{Necessity}. Relation (\ref{newii}) follows from (\ref{eq:*-1})
immediately. 

\emph{Sufficiency.} Let $(e_{n})$ be a normalized unconditional basis
in $X$. Since $\|g_{k}e_{n}\|=1$ for every $k$ and $n$, $(g_{k}e_{n})_{k}$
has a weakly convergent subsequence. By diagonalization and we easily
conclude that $(g_{k})$ has a weakly convergent subsequence. By assumption
the weak limit is nonzero. The sufficiency in the proposition follows
now from Lemma \ref{prop:4.5}.
\end{proof}

\subsection*{Acknowledgment }

The authors thank Michael Cwikel for bringing their attention to the
connection between $\Delta$-convergence and the works of Van Dulst,
Edelstein and Opial, for discussions, careful reading of an advanced
draft of this manuscript, and helpful editorial remarks.

\end{document}